\numberwithin{equation}{section}
\newtheorem{theorem}{Theorem}[section]
\newtheorem{lemma}[theorem]{Lemma}
\newtheorem{prop}[theorem]{Proposition}
\theoremstyle{definition}
\newtheorem{remark}[theorem]{Remark}
\theoremstyle{definition}
\newtheorem{definition}[theorem]{Definition}
\theoremstyle{definition}
\theoremstyle{definition}
\def\dashint{\operatorname%
{\,\,\text{\bf-}\kern-.98em\DOTSI\intop\ilimits@\!\!}}
\def\\det{\text{det}}
\def\.5{\frac{1}{2}}
\def\cD{\mathcal{D}}
\def\cQ{\mathcal{Q}}
\newcommand{\RN}[1]{%
  \textup{\uppercase\expandafter{\romannumeral#1}}%
}
\newcounter{marnote}
\begin{document}


\title[Higher order parabolic systems]{Higher order parabolic systems with piecewise DMO and H\"{o}lder continuous coefficients}

\author[H. Dong]{Hongjie Dong}
\address[H. Dong]{Division of Applied Mathematics, Brown University, 182 George Street, Providence, RI 02912, USA}
\email{Hongjie\_Dong@brown.edu}
\thanks{H. Dong was partially supported by the NSF under agreement DMS-2055244 and DMS-2350129.}

\author[H. Li]{Haigang Li}
\address[H. Li]{School of Mathematical Sciences, Beijing Normal University, Laboratory of MathematiCs and Complex Systems, Ministry of Education, Beijing 100875, China.}
\email{hgli@bnu.edu.cn}
\thanks{H. Li was partially supported by  the Fundamental Research Funds for the Central Universities (2233200015), and Beijing NSF (1242006).}

\author[L. Xu]{Longjuan Xu}
\address[L. Xu]{Academy for Multidisciplinary Studies, Capital Normal University, Beijing 100048, China.}
\email{longjuanxu@cnu.edu.cn}
\thanks{L. Xu was partially supported by NSF of China (12301141), and Beijing Municipal Education Commission Science and Technology Project (KM202410028001).}

\begin{abstract}
In this paper, we are concerned with divergence form, higher-order parabolic systems in a cylindrical domain with a finite number of subdomains. We establish $L_\infty$ and Schauder  estimates of solutions when the leading coefficients  and the non-homogeneous term exhibit piecewise Dini mean oscillation and piecewise H\"{o}lder continuity, respectively. To the best of our knowledge, our results are new for higher-order elliptic and parabolic systems.
\end{abstract}

\maketitle

\section{Introduction and main results}
In this paper, we consider parabolic systems in divergence form of order $2m$
\begin{equation}\label{systems}
{\bf u}_{t}+(-1)^{m}\sum_{|\alpha|\leq m, |\beta|\leq m}D^{\alpha}(A^{\alpha\beta}D^{\beta}{\bf u})=\sum_{|\alpha|\leq m}D^{\alpha}{\bf f}_{\alpha}\quad\mbox{in}~\cQ,
\end{equation}
where $\cQ:=(-T,0)\times\cD$ is a cylindrical domain,  $T\in(0,\infty)$ and
$\cD$ is a bounded domain in $\mathbb R^{d}$, $d\ge 2$. Here 
$${\bf u}=(u^1,\dots,u^n)^{\top},\quad {\bf f}_{\alpha}=(f_\alpha^1,\dots,f_\alpha^n)^{\top}$$
are (column) vector-valued functions. We assume that $\cQ$ contains $M$ disjoint time-dependent subdomains $\cQ_{j},j=1,\ldots,M$, and the interfacial boundaries are $C^{1,\text{Dini}}$ in the spatial variables and $C^{\gamma_{0}}$ in the time variable, where $\gamma_{0}>\frac{1}{2m}$.  By approximation, we also assume that any point $(t,x)\in\cQ$ belongs to the boundaries of at most two of the $\cQ_{j}$'s.

Suppose that $A^{\alpha\beta}=[A^{\alpha\beta}_{ij}]_{i,j=1}^{n}$ are $n\times n$ complex-valued matrices, which are bounded and measurable. For any $(t,x)\in\mathbb R^{d+1}$ and $\xi=(\xi_{\alpha})_{|\alpha|=m}, \xi_{\alpha}\in\mathbb C^{n}$, the leading coefficients matrices $A^{\alpha\beta}$, $|\alpha|=|\beta|=m$, satisfy 
\begin{equation}\label{ellipticity}
\nu|\xi|^{2}\leq \sum_{|\alpha|=|\beta|= m}\mathfrak{R}(A^{\alpha\beta}\xi_{\beta},\xi_{\alpha})\leq \nu^{-1}|\xi|^{2},
\end{equation}
where $\nu>0$ and $\mathfrak{R}(f)$ denotes the real part of $f$. The lower order coefficients $A^{\alpha\beta}$, $|\alpha|\neq m$ or $|\beta|\neq m$, are bounded by a constant $\Lambda\geq 1$.

The aim of this paper is to investigate $L_\infty$ and Schauder estimates of solutions to \eqref{systems} when the leading coefficients possess piecewise Dini mean oscillation (DMO) and piecewise H\"{o}lder continuity, respectively. Refer to \eqref{def omega A} for the precise definition of piecewise DMO. The main feature of this type of coefficients is that they are permitted to lack regularity assumptions in one spatial direction. This means that for every small cylinder, there exists a measurable direction, say $x_d$, such that coefficients can be merely measurable in $x_d$ and exhibit small mean oscillations in the orthogonal directions. This is the first difficulty we need to overcome. For this, we fix a coordinate system according to the geometry of
the subdomains and  prove a weak-type $(1,1)$ estimate for solutions to parabolic systems with coefficients depending only on $x_d$ by using a duality argument. Then we make use of Campanato’s method in the $L_{1/2}$-setting and  perturbation arguments on $\widetilde{D}^\alpha{\bf u}$ and ${\bf U}$ to get the mean oscillation estimates of solutions as showed in Lemma \ref{lemma itera}. This method was first introduced in \cite{c1963} and further developed in \cite{g1983,s1996,k2008,d2012}. When the leading coefficients are piecewise H\"{o}lder continuous, we shall prove Schauder estimates of the solution to \eqref{systems}. This type of coefficients has been studied by several  works for second-order equations; see, for instance, \cite{lv2000,ln2003,dx2019,dx2021}. 

The investigation of parabolic equations and systems with discontinuous coefficients is closely tied to the mechanics of membranes and the theory of stochastic processes. See \cite{ckv,k2004} and references therein. Specifically, the equation \eqref{systems} with $m=1$ is partially motivated by the study of interface problems which appear in many
practical applications, such as in composite materials with closely spaced interfacial boundaries. This scenario is  described in terms of linear second-order divergence type systems with coefficients that can have jump discontinuities in one direction. The primary goal in this context is to study the estimate of the gradient of solutions, representing the stress  from an engineering point of view. Relevant references for elliptic equations include \cite{ba1999,lv2000,ln2003,xb2013,dl2019,dx2019}, while references for parabolic equations include \cite{fknn,ll2017,dx2021}, and for Stokes and Navier-Stokes equations, refer to \cite{cdx2022}. As an extension, it is natural to consider the equation \eqref{systems} with $m\geq 2$. Compared to second-order equations, the proof in the higher-order case is more intricate. See the argument at the end of Step 2 in the proof of Proposition \ref{main prop} for further details. We would like to remark that the local H\"{o}lder regularity is improved from $\mathcal{C}^{1/2}$ in \cite[Lemma 4.1]{dk2011} to $\mathcal{C}^{1}$ in Lemma \ref{lemma xn}, which is necessary for the proof of our main results.

Now let us review  some related results for higher-order systems with discontinuous coefficients in the literature. There have been many results on the $L_p$ $(1<p<\infty)$ theory of elliptic and parabolic equations. See \cite{cff,aq2000,hhh,ps2005,ps2006,mms} for higher-order equations with vanishing mean oscillation coefficients (VMO), and \cite{d1995} for elliptic equations
with measurable coefficients for a restricted range of $m$ or $p$, and \cite{b2009,bw2008,hlw} for  results of fourth order elliptic and parabolic equations. The case when the coefficients have locally small mean oscillations with respect to the spatial variables (and measurable in the time variable in the parabolic case) was studied in \cite{dk2011ARMA}, where both divergence and non-divergence
form parabolic and elliptic systems are considered. In their subsequent work \cite{dk2011}, they investigated the case when  leading coefficients are assumed  to be merely measurable in one spacial direction and have small mean oscillations in the orthogonal directions on each small cylinder. Such coefficients have weaker regularity assumptions compared to those found in the existing literature. It is worth mentioning that the results in \cite{dk2011ARMA,dk2011} focus on the homogeneous Dirichlet boundary conditions. The conormal problem was addressed in  \cite{dk2012} and \cite{dz2016}. 

For the higher-order systems, the classical Schauder estimates were established under the assumption that the coefficients are smooth in both space and time. We refer to \cite{f1983,l1995} and  the references therein. Boccia considered higher-order non-divergence type parabolic systems in the whole space \cite{b2013}, where the coefficients are assumed to be only measurable in the time variable and H\"{o}lder continuous in the space variables.  With the same class of coefficients as in \cite{b2013}, Dong and Zhang \cite{dz2015} obtained Schauder estimates for both divergence and non-divergence type higher-order parabolic systems on a half space with the Dirichlet boundary conditions. They further studied the conormal problem in \cite{dz2016}.

In the present paper, we consider a scenario where the coefficients in the parabolic systems may have discontinuity across interfacial boundaries. This assumption distinguishes our study from the assumptions made in \cite{b2013,dz2015,dz2016}, where the coefficients were assumed to be measurable in time but possess small mean oscillations in all spatial directions. Additionally, we extend our analysis to include non-cylindrical subdomains, which constitutes a more general case in the study of parabolic systems. By considering these more general settings, this paper contributes to a broader understanding of the regularity properties of higher-order parabolic systems with discontinuous coefficients.

To state our main results, we first introduce some notation. By $\mathbb R^{d}$ we mean a $d$-dimensional Euclidean space, a point in $\mathbb R^{d}$ is denoted by $x=(x_1,
\dots,x_d)=(x',x_d)$. Let $\mathbb R^{d+1}=\{(t,x): t\in \mathbb R, x\in \mathbb R^{d}\}$, 
and 
\begin{align*}
B_r(x)= \{ y\in \mathbb R^{d} : |y-x|<r \},\quad
Q_{r} (t,x)= (t-r^{2m},t)\times B_r(x).  
\end{align*} 
Denote $B_r:=B_r(0)$ and $Q_{r} :=Q_{r} (0)$. The parabolic distance between two points $z_1=(t,x)$ and $z_2=(s,y)$ is defined by
\begin{equation*}
|z_1-z_2|_{p}:=|t-s|^{\frac{1}{2m}}+|x-y|.
\end{equation*}

We  assume that $A^{\alpha\beta}$ are of piecewise Dini mean oscillation  in $\cQ$, $|\alpha|=m$, $|\beta|\leq m$, that is, 
\begin{align}\label{def omega A}
\omega_{A^{\alpha\beta}}(r):=\sup_{z_{0}\in \cQ}\inf_{\hat{A}^{\alpha\beta}\in\mathcal{A}}\fint_{Q_{r}(z_{0})}|A^{\alpha\beta}(z)-\hat{A}^{\alpha\beta}|\ dz
\end{align}
satisfies the Dini condition (see, Definition \ref{piece Dini}), where $Q_{r}(z_{0})\subset \cQ$, and $\mathcal{A}$ is the set of piecewise constant functions in each $\cQ_{j}$. 

\begin{definition}\label{piece Dini}
We say that a continuous increasing function $\omega: [0,1]\rightarrow\mathbb R$ satisfies the Dini condition provided that $\omega(0)=0$ and
$$\int_{0}^{r}\frac{\omega(s)}{s}\ ds<+\infty,\quad\forall~r\in(0,1).$$
\end{definition}

Let us define the solution spaces $\mathcal{H}_{p}^{m}(\cQ)$ as follows. Set
$$\mathbb{H}_{p}^{-m}(\cQ):=\bigg\{{\bf f}: {\bf f}=\sum_{|\alpha|\leq m}D^{\alpha}{\bf f}_{\alpha}, {\bf f}_{\alpha}\in L_{p}(\cQ)\bigg\},$$
$$\|{\bf f}\|_{\mathbb{H}_{p}^{-m}(\cQ)}:=\inf\bigg\{\sum_{|\alpha|\leq m}\|{\bf f}_{\alpha}\|_{L_{p}(\cQ)}: {\bf f}=\sum_{|\alpha|\leq m}D^{\alpha}{\bf f}_{\alpha}\bigg\},$$
and
$$\mathcal{H}_{p}^{m}(\cQ):=\{{\bf u}: {\bf u}_{t}\in \mathbb{H}_{p}^{-m}(\cQ), D^{\alpha}{\bf u}\in L_{p}(\cQ), 0\leq|\alpha|\leq m\},$$
$$\|{\bf u}\|_{\mathcal{H}_{p}^{m}(\cQ)}:=\|{\bf u}_{t}\|_{\mathbb{H}_{p}^{-m}(\cQ)}+\sum_{|\alpha|\leq m}\|D^{\alpha}{\bf u}\|_{L_{p}(\cQ)}.$$
For $\cQ=(-T,0)\times\cD$, the space $\mathcal{\mathring{H}}_{p}^{m}(\cQ)$ is the closure of $C_0^\infty([-T,0]\times\cD)$ in $\mathcal{H}_{p}^{m}(\cQ)$. Set $\partial_{p} \cQ:=\big((-T,0) \times \partial\cD\big) \cup \big(\{-T\}\times \overline \cD\big)$. 

For $\varepsilon>0$ small, we set
$$\cD_{\varepsilon}:=\{x\in \cD: \mbox{dist}(x,\partial \cD)>\varepsilon\}.$$
Let $\widetilde{D}^\alpha{\bf u}$ be the collection of $D^\alpha{\bf u}$, $|\alpha|=m$ and $\alpha\neq me_d$. Denote
\begin{equation}\label{defU}
{\bf U}:=\sum_{|\beta|\leq m}A^{\bar{\alpha}\beta}D^{\beta}{\bf u}-(-1)^m{\bf f}_{\bar{\alpha}}
\end{equation}
with $\bar{\alpha}=me_d$. Set
\begin{align*}
\mathcal{E}_1=\|(\widetilde{D}^\alpha{\bf u},{\bf U})\|_{L_{1}(\cQ)}+\sum_{|\alpha|\leq m}\|{\bf f}_\alpha\|_{L_{\infty}(\cQ)}+\|{\bf u}\|_{L_{p}(\cQ)}+\int_{0}^{1}\frac{\tilde{\omega}_{{\bf f}_\alpha}(s)}{s}\ ds,
\end{align*}
and for $\gamma\in(0,1)$, denote
 $$\langle {\bf u}\rangle_{\gamma;\cQ}:=\sup_{\begin{subarray}{1}(t,x),(s,x)\in \cQ\\
		\quad t\neq s
\end{subarray}}\frac{|{\bf u}(t,x)-{\bf u}(s,x)|}{|t-s|^{\gamma}}.$$

The first result in this paper reads as follows.

\begin{theorem}\label{mainthm}
Let $\cQ=(-T,0)\times\cD$, $\varepsilon\in (0,1)$, $p\in(1,\infty)$, and $\gamma\in(0,1)$. Assume that $A^{\alpha\beta}$ and ${\bf f}_{\alpha}$ with $|\alpha|=m, |\beta|\leq m$ are of piecewise Dini mean oscillation in $\cQ$, and $A^{\alpha\beta}$,  ${\bf f}_{\alpha}\in L_\infty(\cQ)$ with $|\alpha|, |\beta|\leq m$, the subdomain $\cQ_j$ is $C^{1,\text{Dini}}$ in $x$ and $C^{\gamma_0}$ in $t$ with $\gamma_{0}>\frac{1}{2m}$. Let ${\bf u}\in \mathcal{H}_{p}^{m}(\cQ)$ be a weak solution to \eqref{systems} in $\cQ$. Then  ${\bf u}\in C^{1/2,m}(\overline{{\cQ}_{j}}\cap ((-T+\varepsilon,0)\times \cD_{\varepsilon}))$, $j=1,\ldots,M$, and 
\begin{equation*}
\|D^m{\bf u}\|_{L_{\infty}((-T+\varepsilon,0)\times \cD_{\varepsilon})}+\langle {\bf u}\rangle_{\frac{1}{2};(-T+\varepsilon,0)\times \cD_{\varepsilon}}\leq N\mathcal{E}_1.
\end{equation*}
Furthermore, for any fixed $z_0\in(-T+\varepsilon,0)\times\cD_\varepsilon$, there exists a coordinate system associated with $z_0$, such that for all $z\in(-T+\varepsilon,0)\times\cD_\varepsilon$, we have 
\begin{align*}
&|(\widetilde{D}^\alpha{\bf u}(z_0),{\bf U}(z_0))-(\widetilde{D}^\alpha{\bf u}(z),{\bf U}(z))|\\
&\leq N\mathcal{E}_1|z_{0}-z|_{p}^{\gamma}+N\int_{0}^{|z_{0}-z|_{p}}\frac{\tilde{\omega}_{{\bf f}_\alpha}(s)}{s}\ ds+N\mathcal{E}_1\int_{0}^{|z_{0}-z|_{p}}\frac{\tilde{\omega}_{A^{\alpha\beta}}(s)}{s}\ ds,
\end{align*}
where $N$ depends on $n,d,m,M,p,\Lambda,\nu,\varepsilon,\gamma,T$, the $C^{1,\text{Dini}}$ and $C^{\gamma_{0}}$ characteristics of $\cQ_{j}$ with respect to $x$ and $t$, respectively, and $\tilde\omega_{\bullet}(t)$ is a Dini function derived from $\omega_{\bullet}(t)$. 
\end{theorem}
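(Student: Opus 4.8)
The plan is to follow the by-now standard Campanato/perturbation scheme adapted to the $L_{1/2}$-setting, exactly as signposted in the introduction. Since all statements are interior in the cylindrical domain $(-T+\varepsilon,0)\times\cD_\varepsilon$, the argument reduces to local estimates near an arbitrary point $z_0$. The first step is to set up the geometry: near $z_0$ one fixes a coordinate system adapted to the interface passing through $z_0$ (if $z_0$ lies in the interior of some $\cQ_j$, the situation is even simpler), so that after flattening the $C^{1,\mathrm{Dini}}$-in-$x$, $C^{\gamma_0}$-in-$t$ interface one is reduced to coefficients depending essentially only on $x_d$ modulo a Dini-small mean oscillation in the remaining variables. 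The key auxiliary facts are: the weak-type $(1,1)$ estimate for systems with coefficients depending only on $x_d$ (proved via the duality argument mentioned in the introduction); the $\mathcal{C}^1$-regularity improvement of Lemma \ref{lemma xn}; and the iteration/mean-oscillation decay encapsulated in Lemma \ref{lemma itera}, which is the engine of the whole argument.

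Second, I would prove the oscillation-decay estimate for the pair $(\widetilde{D}^\alpha{\bf u},{\bf U})$. The point of working with $\widetilde D^\alpha{\bf u}$ (the tangential $m$-th derivatives, $\alpha\neq me_d$) and ${\bf U}=\sum_{|\beta|\le m}A^{\bar\alpha\beta}D^\beta{\bf u}-(-1)^m{\bf f}_{\bar\alpha}$ with $\bar\alpha=me_d$ is that these are precisely the quantities that are continuous across the interface, even though $D^m{\bf u}$ itself jumps: the tangential derivatives are continuous for regularity reasons and ${\bf U}$ plays the role of the "conormal-type" flux which is continuous by the equation. One freezes the coefficients (piecewise constant, in the class $\mathcal{A}$) on a small cylinder $Q_r(z_0)$, compares ${\bf u}$ with the solution ${\bf v}$ of the frozen, $x_d$-only-dependent system with the same boundary data, and uses the Caccioppoli/energy inequality together with the interior estimate for ${\bf v}$ (which gives $\mathcal{C}^1$ control of $\widetilde D^\alpha{\bf v}$ and ${\bf V}$ by Lemma \ref{lemma xn}) to bound the excess of $(\widetilde D^\alpha{\bf u},{\bf U})$ over its average on a smaller cylinder $Q_{\kappa r}(z_0)$ by $\kappa^{1+\text{something}}$ times the excess on $Q_r(z_0)$, plus error terms controlled by $\omega_{A^{\alpha\beta}}(r)$ and $\tilde\omega_{{\bf f}_\alpha}(r)$. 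Summing this geometric-type iteration over dyadic scales — this is Lemma \ref{lemma itera} — yields the pointwise modulus of continuity estimate
\begin{align*}
&|(\widetilde{D}^\alpha{\bf u}(z_0),{\bf U}(z_0))-(\widetilde{D}^\alpha{\bf u}(z),{\bf U}(z))|\\
&\leq N\mathcal{E}_1|z_{0}-z|_{p}^{\gamma}+N\int_{0}^{|z_{0}-z|_{p}}\frac{\tilde{\omega}_{{\bf f}_\alpha}(s)}{s}\ ds+N\mathcal{E}_1\int_{0}^{|z_{0}-z|_{p}}\frac{\tilde{\omega}_{A^{\alpha\beta}}(s)}{s}\ ds,
\end{align*}
after replacing $\omega_\bullet$ by a slightly larger Dini function $\tilde\omega_\bullet$ to absorb the summation (the standard trick of passing from a Dini modulus to its "averaged" Dini majorant).

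Third, once the pointwise control of $(\widetilde D^\alpha{\bf u},{\bf U})$ is in hand, I recover the full conclusion. Boundedness of $D^m{\bf u}$ on $(-T+\varepsilon,0)\times\cD_\varepsilon$ follows because inside each $\cQ_j$ the coefficients are (piecewise) Dini, so $D^m{\bf u}$ is given in terms of $\widetilde D^\alpha{\bf u}$ and of $D^{me_d}{\bf u}$, and the latter is reconstructed algebraically from ${\bf U}$, the lower-order terms, and ${\bf f}_{\bar\alpha}$ using the ellipticity \eqref{ellipticity} (which makes the relevant block of $A^{\bar\alpha\bar\alpha}$ invertible); the $L_\infty$ bound is then the sup of the already-established moduli plus the $L_1$/$L_p$ data in $\mathcal{E}_1$. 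The $\mathcal{C}^{1/2}$-in-time bound $\langle{\bf u}\rangle_{1/2;\cdot}$ comes from integrating ${\bf u}_t$: from the equation ${\bf u}_t=-(-1)^m\sum D^\alpha(A^{\alpha\beta}D^\beta{\bf u})+\sum D^\alpha{\bf f}_\alpha$ lies in $\mathbb{H}^{-m}_\infty$ locally once $D^m{\bf u}\in L_\infty$, and a $\mathcal{H}^m_p\hookrightarrow \mathcal{C}^{1/2,m}$-type interpolation (the parabolic scaling $t\sim x^{2m}$ gives the exponent $1/2$) upgrades this to Hölder-$1/2$ continuity in $t$ on each $\overline{\cQ_j}$. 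Collecting these and tracking the dependence of constants gives the stated inequality.

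The main obstacle, as the authors themselves flag (the remark about "the argument at the end of Step 2 in the proof of Proposition \ref{main prop}"), is the perturbation/comparison step for higher order $m\geq 2$: unlike the second-order case, one cannot simply test with ${\bf u}-{\bf v}$ and control lower-order derivatives cheaply, and the iteration must be carried out simultaneously for the \emph{vector} $(\widetilde D^\alpha{\bf u},{\bf U})$ rather than for a scalar quantity, with careful bookkeeping of how the $m$-th order tangential derivatives and the flux ${\bf U}$ interact across the interface under the non-smooth (merely $C^{\gamma_0}$ in $t$) change of variables. Ensuring the decay exponent in the iteration is genuinely $1+\delta>1$ — which is what forces the $\mathcal{C}^1$ (rather than $\mathcal{C}^{1/2}$) regularity improvement in Lemma \ref{lemma xn}, and which in turn requires $\gamma_0>\frac{1}{2m}$ for the time-regularity of the interface — is the delicate quantitative heart of the proof.
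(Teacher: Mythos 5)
Your overall scheme (localize, freeze coefficients depending only on $x_d$, run a Campanato-type iteration in the $L_{1/2}$-setting for the pair $(\widetilde{D}^\alpha{\bf u},{\bf U})$, then recover $D^m_d{\bf u}$ algebraically and get the time-H\"older bound from the equation) is the same as the paper's, but there are two genuine gaps. The most serious one is circularity in how you obtain $\|D^m{\bf u}\|_{L_\infty}$. The decay estimate of Lemma \ref{lemma itera} carries $\tilde\omega_{A^{\alpha\beta}}(\rho)\,\|D^m{\bf u}\|_{L_\infty(Q_r(z_0))}$ on its right-hand side, so the "already-established moduli" you invoke are not available until $D^m{\bf u}$ is known to be bounded. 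Closing this requires (i) an absorption argument using the smallness of $\int_0^r\tilde\omega_{A^{\alpha\beta}}(s)/s\,ds$ to get the a priori bound $\|D^m{\bf u}\|_{L_\infty}\le N\mathcal{E}_1$ (Lemma \ref{Dmubdd}), and, crucially, (ii) a separate argument removing the qualitative assumption that $D^m{\bf u}$ is locally bounded at all. In the paper this is Step 2 of Proposition \ref{main prop}: piecewise mollification of $A^{\alpha\beta}$ and ${\bf f}_\alpha$, flattening the interface via $(t,x',x_d)\mapsto(t,x',x_d-h_{j_0,\varepsilon}(t,x'))$, and a bootstrap (local $\mathcal{H}^m_p$ estimates for $\partial_t\tilde{\bf u}_\varepsilon$ and the tangential derivatives, then recovering $D_d^m$ through the flux and Sobolev embedding) to show the approximating solutions have locally bounded $D^m$, followed by uniform a priori bounds and a limit. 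This is precisely the point the authors flag as the extra difficulty for $m\ge 2$; your proposal misattributes that remark to the comparison step and omits the approximation argument entirely, so the proof as proposed does not close.

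The second issue is the mechanism of the comparison step. You propose to control the error between ${\bf u}$ and the frozen-coefficient solution by a Caccioppoli/energy inequality, but an $L_2$ comparison only yields a factor $\omega_{A^{\alpha\beta}}(r)^{1/2}$ (since $|\bar A-A|^2\lesssim|\bar A-A|$ gives the square root after averaging), and the square root of a Dini function need not be Dini. The whole point of the weak-type $(1,1)$ estimate (Lemma \ref{weak est barv}, proved by duality and the $\mathcal{C}^1$ estimate of Lemma \ref{lemma xn}) together with the $L_{1/2}$ quasi-norm is to bound the correction ${\bf v}$ solving the frozen system with right-hand side $(\bar A^{\alpha\beta}-A^{\alpha\beta})D^\beta{\bf u}+{\bf f}_\alpha-\bar{\bf f}_\alpha$ \emph{linearly} in $\omega_{A^{\alpha\beta}}(r)$ and $\omega_{{\bf f}_\alpha}(r)$; you list this estimate as a key fact but then do not use it where it is needed. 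Finally, note that the two-point estimate between $z_0$ and $z$ requires comparing quantities defined in different coordinate systems (Lemma \ref{lemma itera2} and the case where $Q_r(z)$ meets the interface), a bookkeeping step your proposal passes over; it is routine but should be acknowledged.
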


Set
\begin{align*}
\mathcal{E}_2=\|(\widetilde{D}^\alpha{\bf u},{\bf U})\|_{L_{1}(\cQ)}+\sum_{j=1}^{M}\sum_{|\alpha|=m}\|{\bf f}_\alpha\|_{\mathcal{C}^{\delta}(\overline\cQ_{j})}+\sum_{|\alpha|\leq m-1}\|{\bf f}_\alpha\|_{L_{\infty}(\cQ)}+\|{\bf u}\|_{L_{p}(\cQ)}.
\end{align*}
We refer to \eqref{def-holder} for the definition of the $\mathcal{C}^{\delta}$-norm.

The next result shows the Schauder estimates when the coefficients and data exhibit piecewise H\"{o}lder continuity.
 
\begin{theorem}\label{mainthm2}
Let $\cQ=(-T,0)\times\cD$, $\varepsilon\in (0,1)$, and $p\in(1,\infty)$. Assume that $A^{\alpha\beta}$ and ${\bf f}_{\alpha}$ with $|\alpha|=m, |\beta|\leq m$ are piecewise $C^{\delta/2,\delta}$, and $A^{\alpha\beta}$,  ${\bf f}_{\alpha}\in L_\infty(\cQ)$ with $|\alpha|, |\beta|\leq m$, the subdomain $\cQ_j$ is $C^{1,\mu}$ in $x$ and $C^{\gamma_0}$ in $t$ with $\gamma_{0}>\frac{1}{2m}$. Then for any fixed $z_0\in(-T+\varepsilon,0)\times\cD_\varepsilon$, there exists a coordinate system associated with $z_0$, such that for all $z\in(-T+\varepsilon,0)\times\cD_\varepsilon$, we have 
\begin{align}\label{Dmuholder}
|(\widetilde{D}^\alpha{\bf u}(z_0),{\bf U}(z_0))-(\widetilde{D}^\alpha{\bf u}(z),{\bf U}(z))|\leq N\mathcal{E}_2|z_{0}-z|_{p}^{\delta'},
\end{align}
and 
\begin{align}\label{edt-ut}
\|D^m{\bf u}\|_{L_{\infty}((-T+\varepsilon,0)\times \cD_{\varepsilon})}+\langle{\bf u}\rangle_{\frac{m+\delta'}{2m};(-T+\varepsilon,0)\times \cD_{\varepsilon}}\leq N\mathcal{E}_2,
\end{align}
where $\delta'=\min\{\delta,\frac{\mu}{1+\mu},2m\gamma_0-1\}$, $N$ depends on $n,d,m,M,p,\Lambda,\nu,\varepsilon,T$, the $C^{1,\mu}$ and $C^{\gamma_{0}}$ characteristics of $\cQ_{j}$ with respect to $x$ and $t$, respectively.
\end{theorem}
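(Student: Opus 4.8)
The plan is to deduce Theorem~\ref{mainthm2} from the same iteration machinery that underlies Theorem~\ref{mainthm} (encoded in Lemma~\ref{lemma itera}, the decay estimates for the mean oscillation of $\widetilde{D}^\alpha{\bf u}$ and ${\bf U}$), but now exploiting the stronger, H\"older-type modulus of continuity of the coefficients and data. First I would record that when $A^{\alpha\beta}$ and ${\bf f}_\alpha$ are piecewise $C^{\delta/2,\delta}$, the associated mean-oscillation functions satisfy $\omega_{A^{\alpha\beta}}(r)\le N r^{\delta}$ and $\tilde\omega_{{\bf f}_\alpha}(r)\le N r^{\delta}$, and similarly the geometric Dini modulus coming from the $C^{1,\mu}$ interfaces is controlled by $r^{\mu/(1+\mu)}$ while the $C^{\gamma_0}$-in-time regularity of the interfaces contributes a factor $r^{2m\gamma_0-1}$; hence every Dini integral $\int_0^{\rho}\tilde\omega_\bullet(s)/s\,ds$ appearing in Theorem~\ref{mainthm} is bounded by $N\rho^{\delta'}$ with $\delta'=\min\{\delta,\mu/(1+\mu),2m\gamma_0-1\}$. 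Substituting these power bounds into the pointwise estimate from Theorem~\ref{mainthm} immediately yields \eqref{Dmuholder}.

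Next I would upgrade this to the full estimate \eqref{edt-ut}. The bound on $\|D^m{\bf u}\|_{L_\infty}$ on the interior cylinder $(-T+\varepsilon,0)\times\cD_\varepsilon$ follows exactly as in Theorem~\ref{mainthm}: $D^\alpha{\bf u}$ for $|\alpha|=m$, $\alpha\neq me_d$ is the component $\widetilde{D}^\alpha{\bf u}$, which is bounded by $\mathcal{E}_2$ by the already-proven oscillation estimate together with its $L_1$ bound, while $D^{me_d}{\bf u}$ is recovered from ${\bf U}$ by inverting the leading coefficient block $A^{\bar\alpha\bar\alpha}$ (using ellipticity \eqref{ellipticity}) after subtracting the lower-order and ${\bf f}_{\bar\alpha}$ contributions, all of which are $L_\infty$. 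For the time-H\"older seminorm $\langle{\bf u}\rangle_{\frac{m+\delta'}{2m}}$, the point is that \eqref{systems} expresses ${\bf u}_t$ in divergence form with ingredients now known to be H\"older of order $\delta'$ in the parabolic metric (the leading terms through $D^m{\bf u}$ and the estimate \eqref{Dmuholder}, the lower-order terms through boundedness and interpolation); a standard argument — testing the equation against suitable cutoffs, or equivalently using that $\|{\bf u}(t,\cdot)-{\bf u}(s,\cdot)\|$ is controlled by $|t-s|$ times an $\mathbb{H}^{-m}_\infty$-type norm of ${\bf u}_t$, combined with the spatial regularity of $D^m{\bf u}$ — gives the claimed $\frac{m+\delta'}{2m}$ exponent in $t$ by the usual parabolic scaling heuristic ($2m$ time-derivatives $\sim$ $2m$ space-derivatives).

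I would organize the details so that the only genuinely new input beyond Theorem~\ref{mainthm} is the conversion of Dini moduli into H\"older powers and the verification that the three competing exponents $\delta$, $\mu/(1+\mu)$, and $2m\gamma_0-1$ enter exactly as stated; everything else is a specialization. The main obstacle I anticipate is bookkeeping the geometric contribution: the $C^{1,\mu}$ regularity of the spatial interface, after the change of coordinates flattening the interface near $z_0$, produces error terms in the frozen-coefficient/flattened-domain comparison whose size is governed by $\mu/(1+\mu)$ rather than $\mu$ (the loss coming from the interplay between the normal direction and the $m$-th order derivatives), and one must check carefully that this is compatible with the $C^{\gamma_0}$-in-time constraint $2m\gamma_0-1$ so that no further loss occurs. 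Once the power bounds on all moduli are in hand, the derivation of \eqref{Dmuholder} and \eqref{edt-ut} is a direct quantitative reading of the proof of Theorem~\ref{mainthm}.
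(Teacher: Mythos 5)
Your treatment of \eqref{Dmuholder} and of the $L_\infty$-bound on $D^m{\bf u}$ is essentially the paper's route: piecewise $C^{\delta/2,\delta}$ coefficients and data, together with the $C^{1,\mu}$/$C^{\gamma_0}$ interfaces, turn every Dini integral in Theorem \ref{mainthm} into a power $N\rho^{\delta'}$ with $\delta'=\min\{\delta,\frac{\mu}{1+\mu},2m\gamma_0-1\}$, and the $L_\infty$ estimate comes from Lemma \ref{Dmubdd} plus the approximation argument of Step 2 of Proposition \ref{main prop}. Up to this point the proposal is a correct specialization.

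The genuine gap is in the time-H\"older seminorm $\langle{\bf u}\rangle_{\frac{m+\delta'}{2m}}$, which is where the paper spends all of Section 4. The ``standard argument'' you invoke (spatial mollification/cutoffs balanced against ${\bf u}_t\in\mathbb{H}^{-m}$, plus ``the spatial regularity of $D^m{\bf u}$'') needs, to reach the exponent $\frac{m+\delta'}{2m}$ rather than $\frac12$, that the mollification error $|{\bf u}-{\bf u}^\rho|$ be of size $\rho^{m+\delta'}$ and that the right-hand side ingredients be spatially H\"older of order $\delta'$ near the point. Neither holds across an interface in this setting: the full $D^m{\bf u}$ is merely bounded and in general jumps across $\partial\cQ_{j}$ (only the combination $(\widetilde{D}^\alpha{\bf u},{\bf U})$ is H\"older, and only in a coordinate system depending on the base point), and $A^{\alpha\beta}$, ${\bf f}_\alpha$ themselves jump. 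So when $B_\rho(x_0)$ meets the interface, the naive Taylor/mollification comparison loses the extra $\delta'$ and the scaling heuristic only returns the $C^{1/2}$-in-time estimate already contained in Theorem \ref{mainthm}. The paper replaces the Taylor polynomial by the laminate-adapted class $\mathbb{P}^{z_0}$ (piecewise ``polynomials'' whose $m$-th normal derivative carries the correct jump, built from $(\bar{A}_{z_0}^{\bar\alpha\bar\alpha}(x_d))^{-1}$ and $\bar{\bf f}_{\bar\alpha;z_0}$), proves a Campanato-type approximation ${\bf u}\approx{\bf p}^{r,z_0}$ with error $NC_0r^{m-|\alpha|+\delta'}$ for all $|\alpha|\le m$ (Lemma following \eqref{def-p}), identifies the coefficients $\ell^{z_0}_\beta$ with $D^\beta{\bf u}(z_0)$ and ${\bf U}(z_0)$ via the algebraic Lemma \ref{lem3.13}, and then compares the approximants at $z_0=(t_0,x_0)$ and $z_1=(t_1,x_0)$, which requires controlling the change between the two point-dependent coordinate systems (the interface moves in time, only $C^{\gamma_0}$), using $|x-y|\le Nr^{1+\delta'}$ and estimating the discrepancies $\bar A_{z_0}$ vs.\ $\bar A_{z_1}$ and $\bar{\bf f}_{\bar\alpha;z_0}$ vs.\ $\bar{\bf f}_{\bar\alpha;z_1}$ term by term as in \eqref{uz0-z1}--\eqref{est third1}. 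None of this is present in your sketch, and without it the claimed exponent $\frac{m+\delta'}{2m}$ does not follow near the interfaces; you would need to supply this (or an equivalent) construction to close the proof of \eqref{edt-ut}.
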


\begin{remark}
One can apply the results in Theorem \ref{mainthm2} to the higher-order elliptic systems 
\begin{equation*}
(-1)^{m}\sum_{|\alpha|\leq m, |\beta|\leq m}D^{\alpha}(A^{\alpha\beta}D^{\beta}{\bf u})=\sum_{|\alpha|\leq m}D^{\alpha}{\bf f}_{\alpha}\quad\mbox{in}~\cD,
\end{equation*}
where $\cD$ contains $M$ disjoint subdomains $\cD_{j},j=1,\ldots,M$, the interfacial boundaries are $C^{1,\mu}$, and $A^{\alpha\beta}$ and ${\bf f}_{\alpha}$ with $|\alpha|=m, |\beta|\leq m$ are piecewise $C^{\delta}$. Indeed, we have  
\begin{align*}
\|D^m{\bf u}\|_{L_{\infty}(\cD_{\varepsilon})}\leq N\Bigg(\|(\widetilde{D}^\alpha{\bf u},{\bf U})\|_{L_{1}(\cD)}+\sum_{j=1}^{M}\sum_{|\alpha|=m}\|{\bf f}_\alpha\|_{\mathcal{C}^{\delta}(\overline\cD_{j})}+\sum_{|\alpha|\leq m-1}\|{\bf f}_\alpha\|_{L_{\infty}(\cD)}+\|{\bf u}\|_{L_{p}(\cD)}\Bigg),
\end{align*}
and for any fixed $x_0\in\cD_\varepsilon$, there exists a coordinate system associated with $x_0$, such that for all $x\in\cD_\varepsilon$, the estimate \eqref{Dmuholder} becomes 
\begin{align*}
&|(\widetilde{D}^\alpha{\bf u}(x_0),{\bf U}(x_0))-(\widetilde{D}^\alpha{\bf u}(x),{\bf U}(x))|\\
&\leq N|x_{0}-x|^{\delta_\mu}\Bigg(\|(\widetilde{D}^\alpha{\bf u},{\bf U})\|_{L_{1}(\cD)}+\sum_{j=1}^{M}\sum_{|\alpha|=m}\|{\bf f}_\alpha\|_{\mathcal{C}^{\delta}(\overline\cD_{j})}+\sum_{|\alpha|\leq m-1}\|{\bf f}_\alpha\|_{L_{\infty}(\cD)}+\|{\bf u}\|_{L_{p}(\cD)}\Bigg),
\end{align*}
where $\delta_\mu=\min\{\delta,\frac{\mu}{1+\mu}\}$, $N$ depends on $n,d,m,M,p,\Lambda,\nu,\varepsilon$, the $C^{1,\mu}$ characteristics of $\cD_{j}$.
\end{remark}

\section{Auxiliary estimates}\label{subsecauxi}
Consider the operator with coefficients depending only on $x_d$ as follows:
$$\mathcal{L}_{0}{\bf u}:=\sum_{|\alpha|=|\beta|=m}D^{\alpha}(\bar{A}^{\alpha\beta}(x_d)D^{\beta}{\bf u}).$$
Denote 
$$\bar{{\bf U}}:=\sum_{|\beta|=m}\bar{A}^{\bar{\alpha}\beta}(x_d)D^{\beta}{\bf u},$$
where $\bar{\alpha}=me_d$. Let $\widetilde{D}^\alpha{\bf u}$ be the collection of $D^\alpha{\bf u}$, $|\alpha|=m$ and $\alpha\neq me_d$. For $\gamma\in(0,1)$ and a function ${\bf u}$ defined in $Q_1$, denote
\begin{equation}\label{def-holder}
[{\bf u}]_{{\mathcal C}^\gamma(Q_1)}:=\sup_{\substack{(t,x),(s,y)\in Q_1\\(t,x)\neq(s,y)}}\frac{|{\bf u}(t,x)-{\bf u}(s,y)|}{|t-s|^{\frac{\gamma}{2m}}+|x-y|^{\gamma}},\quad\|{\bf u}\|_{{\mathcal C}^\gamma(Q_1)}:=\|{\bf u}\|_{L_\infty(Q_1)}+[{\bf u}]_{{\mathcal C}^\gamma(Q_1)}.
\end{equation}

\begin{lemma}\label{lemma xn}
Let $p\in(0,\infty)$. Assume that ${\bf u}\in C_{\text{loc}}^{\infty}(\mathbb R^{d+1})$ satisfies ${\bf u}_t+(-1)^{m}\mathcal{L}_{0}{\bf u}=0$ in $Q_{1}$. Then there exists a constant $N=N(n,d,m,p,\nu,\Lambda)$ such that 
\begin{align}\label{Dmu}
\|{\bf u}_t\|_{L_{\infty}(Q_{1/2})}+\|\widetilde{D}^\alpha{\bf u}\|_{{\mathcal C}^{1}(Q_{1/2})}+\|\bar{{\bf U}}\|_{{\mathcal C}^{1}(Q_{1/2})}\leq N\|D^m{\bf u}\|_{L_p(Q_{1})}.
\end{align}
\end{lemma}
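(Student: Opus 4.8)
The plan is to prove the estimate by a bootstrap/difference-quotient argument in the tangential directions $x' = (x_1,\dots,x_{d-1})$ and $t$, combined with the structure of the equation to upgrade regularity in $x_d$. The coefficients $\bar A^{\alpha\beta}$ depend only on $x_d$, so the system is translation invariant in $(t,x')$; hence for any tangential difference quotient $\delta_{h,k}{\bf u}(t,x) = h^{-1}({\bf u}(t,x+he_k)-{\bf u}(t,x))$ with $k < d$, the function $\delta_{h,k}{\bf u}$ again solves ${\bf v}_t+(-1)^m\mathcal L_0{\bf v}=0$ in a slightly smaller cylinder. The same is true for the time-difference quotient $h^{-1}({\bf u}(t+h,x)-{\bf u}(t,x))$. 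First I would record the basic interior energy/Caccioppoli estimate for $\mathcal L_0$: for a solution ${\bf v}$, $\|D^m{\bf v}\|_{L_2(Q_{r})} \le N r^{-m}\|{\bf v}\|_{L_2(Q_{2r})}$ and more generally $\|D^m{\bf v}\|_{L_2(Q_r)} \le N\|D^m{\bf v}\|_{L_2(Q_{2r})}$, together with the local boundedness estimate $\|D^m{\bf v}\|_{L_\infty(Q_{1/2})} \le N\|D^m{\bf v}\|_{L_p(Q_1)}$ for every $p\in(0,\infty)$ — the latter being exactly \cite[Lemma 4.1]{dk2011} (the $\mathcal C^{1/2}$-type result), which holds since for this operator $\widetilde D^\alpha{\bf v}$ and $\bar{\bf U}$ are themselves solution-like quantities with good interior estimates. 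Iterating the $L_2 \to L_\infty$ bound with the $L_p$ self-improving property of subsolutions gives control of $\|D^m{\bf v}\|_{L_\infty}$ by $\|D^m{\bf v}\|_{L_p}$ for all $p>0$.

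Next I would apply this to the tangential and time difference quotients. Since $\delta_{h,k}{\bf u}$ solves the same equation, $\|D^m\delta_{h,k}{\bf u}\|_{L_\infty(Q_{1/2})} \le N\|D^m\delta_{h,k}{\bf u}\|_{L_p(Q_{3/4})} \le N\|D^{m+1}{\bf u}\|_{L_p(Q_{7/8})}$ (after first using an interior estimate to bound one more derivative of ${\bf u}$ in $L_p$ on an intermediate cylinder). Iterating, one obtains $L_\infty$ bounds on $D^m D_{x'}^j {\bf u}$ and on $D^m \partial_t^j{\bf u}$ for all $j$, hence in particular $\|\partial_t {\bf u}\|_{L_\infty(Q_{1/2})}$, $\|\widetilde D^\alpha{\bf u}\|_{\mathcal C^1(Q_{1/2})}$ in the $(t,x')$ variables, all dominated by $\|D^m{\bf u}\|_{L_p(Q_1)}$. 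The remaining point is regularity in $x_d$. For $\widetilde D^\alpha{\bf u}$ with $\alpha = me_k$, $k<d$, or mixed tangential multi-indices, the $x_d$-derivative $D_d\widetilde D^\alpha{\bf u} = D^{\alpha+e_d}{\bf u}$ is a component of $D^{m+1}{\bf u}$ with at least one tangential derivative already peeled off, so it equals $\widetilde D^{\tilde\alpha}(D_{x'}{\bf u})$ up to reindexing and is controlled by the previous step. For $\bar{\bf U} = \sum_{|\beta|=m}\bar A^{\bar\alpha\beta}D^\beta{\bf u}$: differentiating in any tangential direction is immediate since $\bar A$ is constant in $(t,x')$; and the equation itself reads $\partial_t{\bf u} + (-1)^m D_d^m \bar{\bf U} + (-1)^m\sum_{\alpha\ne\bar\alpha}D^\alpha(\cdots) = 0$, which expresses $D_d^m\bar{\bf U}$ — and after one differentiation, $D_d\bar{\bf U}$ (modulo already-controlled tangential terms) — in terms of $\partial_t{\bf u}$ and tangential derivatives of $D^m{\bf u}$, all of which we have bounded in $\mathcal C^1$ in the $(t,x')$ variables. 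Combining the tangential Hölder/Lipschitz bounds with the $L_\infty$ bound on the full gradient in $x_d$ and an interpolation/Campanato argument gives the joint $\mathcal C^1(Q_{1/2})$ bound.

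The main obstacle I expect is handling the $x_d$-direction cleanly: $\bar A^{\alpha\beta}$ is merely bounded measurable in $x_d$, so no naive differentiation in $x_d$ is allowed, and one must carefully track which components of $D^{m+1}{\bf u}$ are "good" (a genuine $x_d$-derivative of something already known to be regular, or extractable from the PDE as above) versus which require the equation. This is precisely why the statement singles out $\widetilde D^\alpha{\bf u}$ and $\bar{\bf U}$ rather than all of $D^m{\bf u}$: the pure $x_d$-derivative $D_d^m{\bf u}$ need not be better than $L_\infty$, but $\bar{\bf U}$ — the conormal-type quantity — is, because it is the object the divergence structure in the $x_d$-direction acts on. A secondary technical point is that the local boundedness estimate $\|D^m{\bf v}\|_{L_\infty(Q_{1/2})}\le N\|D^m{\bf v}\|_{L_p(Q_1)}$ must be available for $p<1$ as well, which is obtained by the standard Moser/De Giorgi-type iteration exploiting that $|D^m{\bf v}|^s$ is a subsolution-like quantity for suitable $s$, or by a simple self-improvement argument from the $p=2$ case via a covering and Young's inequality; since the excerpt states $p\in(0,\infty)$ this needs to be invoked or reproved, but it is routine and follows the same lines as in \cite{dk2011}.
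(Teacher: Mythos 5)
Your skeleton is the same as the paper's (translation invariance in $(t,x')$, the interior $L_\infty$ bound for $D^m$ of solutions plus an interpolation--iteration step to reach small $p$, reading $D_d\widetilde{D}^\alpha{\bf u}$ off from $D^mD_{x'}{\bf u}$, and using the equation for the $x_d$-regularity of $\bar{{\bf U}}$), but your treatment of the time variable has a genuine gap. The parenthetical reduction via ``an interior estimate to bound one more derivative of ${\bf u}$ in $L_p$'' is not available: $D^{m+1}{\bf u}$ contains $D_d^{m+1}{\bf u}$, which cannot be estimated by $\|D^m{\bf u}\|_{L_p}$ when $\bar A^{\alpha\beta}$ is merely measurable in $x_d$. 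What is actually needed (and what the paper uses) is the energy-type estimate $\|{\bf u}_t\|_{L_2(Q_{2/3})}+\|D^mD_{x'}{\bf u}\|_{L_2(Q_{2/3})}\le N\|D^m{\bf u}\|_{L_2(Q_{3/4})}$ of \cite[Lemma 3.5]{dk2011}, proved through the weak formulation; the Caccioppoli inequalities you list go in the opposite direction and give no uniform control of the time difference quotients. More importantly, the inference ``$L_\infty$ bounds on $D^m\partial_t^j{\bf u}$, hence in particular $\|\partial_t{\bf u}\|_{L_\infty(Q_{1/2})}$'' is a non sequitur: bounds on the $m$-th spatial derivatives of $\partial_t{\bf u}$ do not bound $\partial_t{\bf u}$ itself without a separate low-order estimate of $\partial_t{\bf u}$ in terms of $D^m{\bf u}$. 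The paper closes exactly this point by applying the local $\mathcal{H}^m_q$ estimate of Lemma \ref{lem loc lq} to the solution ${\bf u}_t$, then the $L_2$ bound from \cite[Lemma 3.5]{dk2011}, then Sobolev embedding; some equivalent mechanism must be added to your argument.

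The second gap is the $x_d$-derivative of $\bar{{\bf U}}$ when $m\ge 2$. The equation only yields
\begin{equation*}
D_d^m\bar{{\bf U}}=(-1)^{m+1}{\bf u}_t-\sum_{\substack{|\alpha|=|\beta|=m\\ \alpha_d<m}}D_d^{\alpha_d}\big(\bar A^{\alpha\beta}(x_d)D_{x'}^{\alpha'}D^{\beta}{\bf u}\big),
\end{equation*}
an identity for the $m$-th $x_d$-derivative whose right-hand side is a bounded function plus $D_d^{\alpha_d}$ (with $\alpha_d\le m-1$) of bounded functions. Your phrase ``after one differentiation, $D_d\bar{{\bf U}}$'' is not an argument: to conclude $\|D_d\bar{{\bf U}}\|_{L_\infty}\le N\|D^m{\bf u}\|_{L_p(Q_1)}$ one must anti-differentiate $m-1$ times in $x_d$ and interpolate, using also that $\bar{{\bf U}}$ itself is bounded; this is precisely \cite[Corollary 4.4]{dk2011}, which the paper invokes and which you would need to state or prove (for $m=1$ the step is trivial, which may be why it looks harmless). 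With these two ingredients supplied, the remainder of your outline coincides with the paper's proof.
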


\begin{proof}
Note that  $({\bf u}_t)_t+(-1)^{m}\mathcal{L}_{0}{\bf u}_t=0$ in $Q_{1}$. By Lemma \ref{lem loc lq} and \cite[Lemma 3.5]{dk2011}, we have for any $q\in(1,\infty)$,
\begin{align}\label{Hqmut}
\|{\bf u}_t\|_{\mathcal{H}_{q}^{m}(Q_{1/2})}\leq N\|{\bf u}_t\|_{L_{2}(Q_{2/3})}\leq N\|D^m{\bf u}\|_{L_2(Q_{3/4})}.
\end{align}
This in combination with the Sobolev embedding theorem yields 
\begin{equation}\label{est-ut}
\|{\bf u}_t\|_{L_{\infty}(Q_{1/2})}\leq N\|D^m{\bf u}\|_{L_2(Q_{3/4})}.
\end{equation}
By \cite[Lemmas 4.1 and 4.6]{dk2011}, we have 
\begin{equation}\label{Dualpha}
\|\widetilde{D}^\alpha{\bf u}\|_{L_{\infty}(Q_{3/4})}\leq N\|D^m{\bf u}\|_{L_2(Q_{1})}
\end{equation}
and
\begin{equation}\label{DU}
\|\bar{{\bf U}}\|_{L_{\infty}(Q_{3/4})}\leq N\|D^m{\bf u}\|_{L_2(Q_{1})}.
\end{equation}
From
\begin{align*}
\bar{{\bf U}}=\sum_{|\beta|=m}\bar{A}^{\bar{\alpha}\beta}(x_d)D^{\beta}{\bf u}=\bar{A}^{\bar{\alpha}\bar\alpha}(x_d)D_d^m{\bf u}+\sum_{|\beta|=m,~\beta\neq me_d}\bar{A}^{\bar{\alpha}\beta}(x_d)D^{\beta}{\bf u},
\end{align*}
it follows that
\begin{align*}
D_d^m{\bf u}=\big(\bar{A}^{\bar{\alpha}\bar\alpha}(x_d)\big)^{-1}\left(\bar{{\bf U}}-\sum_{|\beta|=m,~\beta\neq me_d}\bar{A}^{\bar{\alpha}\beta}(x_d)D^{\beta}{\bf u}\right).
\end{align*}
Then by using \eqref{ellipticity}, the boundedness of $A^{\alpha\beta}$, \eqref{Dualpha}, and \eqref{DU}, we have 
\begin{align*}
\|D_d^m{\bf u}\|_{L_\infty(Q_{3/4})}\leq N\left(\|\widetilde{D}^\alpha{\bf u}\|_{L_\infty(Q_{3/4})}+\|\bar{{\bf U}}\|_{L_\infty(Q_{3/4})}\right)\leq N\|D^m{\bf u}\|_{L_2(Q_{1})}.
\end{align*}
This together with \eqref{Dualpha} yields 
\begin{align}\label{DmuL2}
\|D^m{\bf u}\|_{L_\infty(Q_{3/4})}\leq N\|D^m{\bf u}\|_{L_2(Q_{1})}.
\end{align}
For $p\in(0,2)$, using the interpolation inequality, we obtain
\begin{align*}
\|D^m{\bf u}\|_{L_2(Q_{1})}\leq\|D^m{\bf u}\|_{L_p(Q_{1})}^{p/2}\|D^m{\bf u}\|_{L_\infty(Q_{1})}^{1-p/2}.
\end{align*}
Then combining with \eqref{DmuL2}, and using Young's inequality, we have 
\begin{align*}
\|D^m{\bf u}\|_{L_\infty(Q_{3/4})}\leq N\|D^m{\bf u}\|_{L_p(Q_{1})}^{p/2}\|D^m{\bf u}\|_{L_\infty(Q_{1})}^{1-p/2}\leq\frac{1}{2}\|D^m{\bf u}\|_{L_\infty(Q_{1})}+N\|D^m{\bf u}\|_{L_p(Q_{1})}.
\end{align*}
By an iteration argument (see, for instance, \cite[ Chapter V, Lemma 3.1]{g1983}), we have for all $p>0$,
\begin{equation}\label{estDmu}
\|D^m{\bf u}\|_{L_\infty(Q_{3/4})}\leq N\|D^m{\bf u}\|_{L_p(Q_{1})}.
\end{equation}
This together with \eqref{est-ut} gives 
\begin{equation}\label{est-ut22}
\|{\bf u}_t\|_{L_{\infty}(Q_{1/2})}\leq N\|D^m{\bf u}\|_{L_p(Q_{1})},\quad p>0.
\end{equation}

Replacing ${\bf u}$ with ${\bf u}_t$ in \eqref{estDmu} with a slightly smaller domain, and using \eqref{Hqmut}, we derive 
\begin{align}\label{Dmut}
\|D^m{\bf u}_t\|_{L_{\infty}(Q_{1/2})}\leq N\|D^m{\bf u}_t\|_{L_2(Q_{2/3})}\leq N\|D^m{\bf u}\|_{L_2(Q_{3/4})}\leq N\|D^m{\bf u}\|_{L_p(Q_{1})},\quad\forall~p>0.
\end{align}
Since $\bar{A}^{\bar{\alpha}\beta}(x_d)$ are independent of $x'$, then $D_{x'}{\bf u}$ satisfies
$$(D_{x'}{\bf u})_t+(-1)^{m}\mathcal{L}_{0}(D_{x'}{\bf u})=0\quad\mbox{in}~Q_1.$$
Similar to \eqref{Dmut},  using \cite[Lemma 3.5]{dk2011},  we obtain
\begin{align}\label{DmDu}
\|D^mD_{x'}{\bf u}\|_{L_\infty(Q_{1/2})}\leq N\|D^mD_{x'}{\bf u}\|_{L_2(Q_{2/3})}\leq N\|D^m{\bf u}\|_{L_2(Q_{3/4})}\leq N\|D^m{\bf u}\|_{L_p(Q_{1})},\quad\forall~p>0.
\end{align}
With \eqref{estDmu}, \eqref{Dmut} and \eqref{DmDu} in hand, we derive 
\begin{align*}
\|\widetilde{D}^\alpha{\bf u}\|_{{\mathcal C}^{1}(Q_{1/2})}\leq N\|D^m{\bf u}\|_{L_p(Q_{1})}.
\end{align*}
For the estimate of $\|\bar{{\bf U}}\|_{{\mathcal C}^{1}(Q_{1/2})}$, we first note that, by using \eqref{Dmut} and \eqref{DmDu}, we have 
\begin{align}\label{est-Ut}
\|\bar{{\bf U}}_t\|_{L_\infty(Q_{1/2})}=\Big\|\sum_{|\beta|=m}\bar{A}^{\bar{\alpha}\beta}(x_d)D^{\beta}{\bf u}_t\Big\|_{L_\infty(Q_{1/2})}\leq N\|D^m{\bf u}\|_{L_p(Q_{1})},\quad\forall~p>0,
\end{align}
and 
\begin{align}\label{est-DxU}
\|D_{x'}\bar{{\bf U}}\|_{L_\infty(Q_{1/2})}=\Big\|\sum_{|\beta|=m}\bar{A}^{\bar{\alpha}\beta}(x_d)D_{x'}D^{\beta}{\bf u}\Big\|_{L_\infty(Q_{1/2})}\leq N\|D^m{\bf u}\|_{L_p(Q_{1})},\quad\forall~p>0.
\end{align}
It follows from the equation ${\bf u}_t+(-1)^{m}\mathcal{L}_{0}{\bf u}=0$ that 
\begin{align*}
D_d^m\bar{{\bf U}}=(-1)^{m+1}{\bf u}_t-\sum_{\substack{|\alpha|=|\beta|=m\\\alpha_d<m}}D_d^{\alpha_d}(\bar{A}^{\bar{\alpha}\beta}(x_d)D_{x'}^{\alpha'}D^{\beta}{\bf u}),
\end{align*}
where $\alpha=(\alpha_1,\dots,\alpha_{d-1},\alpha_d)=(\alpha',\alpha_d)$. Then  combining \eqref{est-ut22}, \eqref{DmDu} with $D^mD_{x'}{\bf u}$ replaced by $D_{x'}^{\alpha'}D^{\beta}{\bf u}$, and \cite[Corollary 4.4]{dk2011}, we obtain 
\begin{equation}\label{est-DdU}
\|D_d\bar{{\bf U}}\|_{L_\infty(Q_{1/2})}\leq N\|D^m{\bf u}\|_{L_p(Q_{1})},\quad\forall~p>0.
\end{equation}
Thus, the estimate of $\|\bar{{\bf U}}\|_{{\mathcal C}^{1}(Q_{1/2})}$ follows from \eqref{est-Ut}--\eqref{est-DdU}.
\end{proof}

\begin{remark}\label{rmkadj}
Let ${\mathcal{L}_0^{*}}$ be the adjoint operator of ${\mathcal{L}_0}$, then a similar argument shows that the solution of $-{\bf u}_t+(-1)^{m}\mathcal{L}_{0}^{*}{\bf u}=0$ satisfies
\begin{align*}
\|{\bf u}_t\|_{L_{\infty}(Q_{1/2})}+\|\widetilde{D}^\alpha{\bf u}\|_{{\mathcal C}^{1}(Q_{1/2})}+\|\bar{{\bf U}}\|_{{\mathcal C}^{1}(Q_{1/2})}\leq N\|D^m{\bf u}\|_{L_p((-1,1)\times B_{1})}.
\end{align*}
\end{remark}

Next let us recall the definition of variably partially small BMO  coefficients in \cite{dk2011}: there exists a sufficiently small constant $\gamma_{0}=\gamma_{0}(d,n,m,p,\nu)\in (0,1/2)$ and a constant $r_{0}\in(0,1)$ such that for any $r\in(0,r_{0})$ and $z_0:=(t_{0},x_{0})\in Q_{1}$ with $B_{r}(x_{0})\subset B_{1}$, in a coordinate system depending on $(t_{0},x_{0})$ and $r$, one can find $\bar{A}^{\alpha\beta}(x_d):=\bar{A}_{r,z_0}^{\alpha\beta}(x_d)$ satisfying
\begin{equation}\label{BMO}
\fint_{Q_{r}(t_{0},x_{0})}|A^{\alpha\beta}(t,x)-\bar A^{\alpha\beta}(x_d)|\ dx dt\leq\gamma_{0}.
\end{equation}
Here, we used the subscript in $\bar A_{r,z_0}^{\alpha\beta}(x_d)$ to indicate that the function depends on $r$ and $z_0$. We shall give the $\mathcal{H}_{p}^{m}$-solvability for parabolic systems with leading coefficients which satisfy \eqref{BMO} in $Q_{1}$. For this, we choose a cut-off function $\eta\in C_{0}^{\infty}(B_{1})$ with
$$0\leq\eta\leq1,\quad \eta\equiv1~\mbox{in}~B_{3/4},\quad|\nabla\eta|\leq 8.$$
Denote 
\begin{equation}\label{def-tilde-A}
\tilde{A}^{\alpha\beta}=\eta A^{\alpha\beta}(t,x)+\nu(1-\eta)\delta_{\alpha\beta}\delta_{ij},
\end{equation}
where $\delta_{\alpha\beta}$ and $\delta_{ij}$ are the Kronecker delta symbols. 
Let $\tilde{\mathcal{L}\ }$ be the elliptic operator defined by
$$
\tilde{\mathcal{L}\ }{\bf u}:=\sum_{|\alpha|\leq m,~|\beta|\leq m}D^{\alpha}(\tilde{A}^{\alpha\beta}D^{\beta}{\bf u}).
$$
Then for sufficiently small $\gamma$, the coefficients $\tilde{A}^{\alpha\beta}(t,x)$ and the boundary $\partial B_{1}$ satisfy the Assumption 8.1 ($\gamma$) in \cite{dk2011}. By \cite[Theorem 8.2]{dk2011}, we have the following $\mathcal{H}_{p}^{m}$-solvability.

\begin{lemma}\label{solvability}
For any $p\in(1,\infty)$ and ${\bf f}_{\alpha}\in L_{p}(Q_{1})$, there exists a unique solution ${\bf u}\in \mathcal{\mathring{H}}_{p}^{m}(Q_{1})$ of
\begin{align*}
{\bf u}_t+(-1)^m\tilde{\mathcal{L}\ }{\bf u}=\sum_{|\alpha|\leq m}D^{\alpha}{\bf f}_{\alpha}&\quad\mbox{in}~Q_{1}
\end{align*}
with the initial data ${\bf u}(-1,\cdot)\equiv0$ in $B_{1}$. Furthermore, ${\bf u}$ satisfies
\begin{align*}
\|{\bf u}\|_{\mathcal{H}_{p}^{m}(Q_{1})}\leq N\sum_{|\alpha|\leq m}\|{\bf f}_{\alpha}\|_{L_{p}(Q_{1})},
\end{align*}
where $N$ depends on $d,n,m,p,\nu,\Lambda$, and $r_{0}$.
\end{lemma}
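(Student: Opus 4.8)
The plan is to obtain the statement directly from \cite[Theorem 8.2]{dk2011}, once it is checked that the modified operator $\tilde{\mathcal{L}\ }$ and the domain $B_1$ satisfy the hypotheses recorded just above the lemma. The first point is that $\tilde A^{\alpha\beta}$ in \eqref{def-tilde-A} is an admissible coefficient tensor for that theorem. For $|\alpha|=|\beta|=m$ the leading part of $\tilde A^{\alpha\beta}$ is the convex combination $\eta A^{\alpha\beta}+\nu(1-\eta)\delta_{\alpha\beta}\delta_{ij}$ of two tensors, each of which satisfies \eqref{ellipticity} with the constant $\nu$; since $0\le\eta\le1$, the combination again satisfies \eqref{ellipticity} with the same $\nu$. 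The lower-order part is bounded by $\max\{\Lambda,\nu\}$.

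The second, and only genuinely non-routine, point is that the truncation by $\eta$ preserves the variably partially small BMO condition throughout $B_1$, not merely where $\tilde A^{\alpha\beta}=A^{\alpha\beta}$. On $B_{3/4}$ one has $\tilde A^{\alpha\beta}\equiv A^{\alpha\beta}$, so \eqref{BMO} is inherited. For a small cylinder $Q_r(z_0)$ with $z_0$ in the transition annulus $B_1\setminus B_{3/4}$, work in the coordinate system attached to $z_0$ in which \eqref{BMO} provides $\bar A^{\alpha\beta}(x_d)$ with $\fint_{Q_r(z_0)}|A^{\alpha\beta}-\bar A^{\alpha\beta}|\,dx\,dt\le\gamma_0$, and compare $\tilde A^{\alpha\beta}$ to the $x_d$-dependent tensor $\bar{\tilde A}^{\alpha\beta}(x_d):=\eta(x_0)\bar A^{\alpha\beta}(x_d)+\nu(1-\eta(x_0))\delta_{\alpha\beta}\delta_{ij}$. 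Expanding $\tilde A^{\alpha\beta}-\bar{\tilde A}^{\alpha\beta}$ and using $|\eta(x)-\eta(x_0)|\le|\nabla\eta|\,|x-x_0|\le 8r$ together with the boundedness of $A^{\alpha\beta}$ and $\bar A^{\alpha\beta}$, the $L_1$-average of $|\tilde A^{\alpha\beta}-\bar{\tilde A}^{\alpha\beta}|$ over $Q_r(z_0)$ is at most $\gamma_0+N r$ with $N=N(\nu)$, which is below the threshold required by Assumption 8.1($\gamma$) in \cite{dk2011} once $\gamma_0$ and $r_0$ are chosen small. Finally $\tilde A^{\alpha\beta}\equiv\nu\delta_{\alpha\beta}\delta_{ij}$ is constant near $\partial B_1$, so the condition is trivially met for cylinders centered there, and $\partial B_1$ is smooth, so the geometric part of Assumption 8.1($\gamma$) is automatic. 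Hence for $\gamma=\gamma(d,n,m,p,\nu)$ small enough all the hypotheses of \cite[Theorem 8.2]{dk2011} hold for the pair $(\tilde{\mathcal{L}\ },B_1)$.

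It then remains to apply that theorem on $Q_1=(-1,0)\times B_1$ with zero initial data: it yields, for every ${\bf f}_\alpha\in L_p(Q_1)$, a unique ${\bf u}\in\mathcal{\mathring{H}}_p^m(Q_1)$ solving ${\bf u}_t+(-1)^m\tilde{\mathcal{L}\ }{\bf u}=\sum_{|\alpha|\le m}D^\alpha{\bf f}_\alpha$ with ${\bf u}(-1,\cdot)\equiv0$ in $B_1$, together with the estimate $\|{\bf u}\|_{\mathcal{H}_p^m(Q_1)}\le N\sum_{|\alpha|\le m}\|{\bf f}_\alpha\|_{L_p(Q_1)}$, where $N$ depends on $d,n,m,p,\nu,\Lambda$ and $r_0$. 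If the cited theorem is stated with an auxiliary zeroth-order term $\lambda{\bf u}$ for $\lambda\ge\lambda_0>0$, it is removed by the substitution ${\bf v}=e^{-\lambda t}{\bf u}$, which on the finite interval $[-1,0]$ converts the equation with the $\lambda$-term into a $\lambda$-free one with right-hand side $e^{-\lambda t}{\bf f}_\alpha$ and alters all relevant $\mathcal{H}_p^m$ and $L_p$ norms only by factors bounded in terms of $\lambda_0$. I expect no further obstacle: essentially all the content is the verification in the second paragraph, and even that is routine once the explicit form \eqref{def-tilde-A} and the bound $|\nabla\eta|\le 8$ are exploited.
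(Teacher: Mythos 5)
Your proposal is correct and follows essentially the same route as the paper: the paper simply observes that the truncated coefficients $\tilde A^{\alpha\beta}$ and the smooth boundary $\partial B_1$ satisfy Assumption 8.1($\gamma$) of \cite{dk2011} and then invokes \cite[Theorem 8.2]{dk2011}. Your explicit verification of the ellipticity of the convex combination and of the small-BMO condition in the transition annulus (and the removal of a possible $\lambda$-term via $e^{-\lambda t}$) just spells out details the paper leaves implicit.
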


Denote
$$
\tilde{\mathcal{L}_0\ }{\bf u}:=\sum_{|\alpha|=|\beta|=m}D^{\alpha}(\tilde{A}^{\alpha\beta}D^{\beta}{\bf u}),
$$
where $\tilde{A}^{\alpha\beta}$ is defined in \eqref{def-tilde-A} with $A^{\alpha\beta}(t,x)$ replaced by $\bar{A}_{r,z_0}^{\alpha\beta}(x_d)$, and
$\eta\in C_{0}^{\infty}(B_{r}(x_{0}))$ satisfies
$$0\leq\eta\leq1,\quad\eta\equiv1~\mbox{in}~B_{2r/3}(x_{0}),
\quad|\nabla\eta|\leq {6}/{r}.$$

\begin{lemma}\label{weak est barv}
Let $p\in(1,\infty)$ and ${\bf v}\in \mathcal{H}_{p}^{m}(Q_{r_0}(z_{0}))$ be a weak solution to the problem 
\begin{align*}
\begin{cases}
{\bf v}_t+(-1)^{m}\tilde{\mathcal{L}_0\ }{\bf v}=\sum_{|\alpha|=m}D^{\alpha}({\bf F}_\alpha\chi_{Q_{r_0/2}(z_{0})})&\ \mbox{in}~Q_{r_0}(z_{0}),\\
{\bf v}=|D{\bf v}|=\dots=|D^{m-1}{\bf v}|=0&\ \mbox{on}~\partial_p Q_{r_0}(z_{0}),
\end{cases}
\end{align*}
where $r_0\in(0,1)$ and ${\bf F}_\alpha\in L_{p}(Q_{r_0/2}(z_{0}))$. Then for any $s>0$, we have for $|\beta|\leq m$, 
\begin{align}\label{Dmv}
|\{z\in Q_{r_0/2}(z_{0}): |D^{\beta}{\bf v}(z)|>s\}|\leq\frac{N}{s}r_0^{m-|\beta|}\sum_{|\alpha|=m}\|{\bf F}_\alpha\|_{L_{1}(Q_{r_0/2}(z_{0}))},
\end{align}
where $N=N(n,d,m,p,\nu)$.
\end{lemma}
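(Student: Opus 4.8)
The plan is to establish the weak-type $(1,1)$ estimate \eqref{Dmv} by a duality argument, following the strategy that was used to prove weak-type estimates for scalar higher-order parabolic equations. First I would reduce to the case $|\beta|=m$: once the bound is known for $|D^m{\bf v}|$, the lower-order cases $|\beta|\le m-1$ follow since ${\bf v}$ and its derivatives up to order $m-1$ vanish on the parabolic boundary, so one can recover $\|D^{\beta}{\bf v}\|$ on $Q_{r_0/2}$ from $\|D^m{\bf v}\|$ with a gain of a factor $r_0^{m-|\beta|}$ by Poincar\'e-type inequalities (integrating along segments to the boundary and using the parabolic scaling). By translation we may take $z_0=0$, and by the parabolic scaling $z\mapsto(r_0^{2m}t,r_0 x)$ we may normalize $r_0=1$, which absorbs the $r_0^{m-|\beta|}$ factor and the $L_1$-norm scales correctly.

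For the heart of the matter, fix a multi-index $\beta$ with $|\beta|=m$ and $1\le i\le n$, and a test function ${\bf g}\in C_0^\infty(Q_{1/2})$ with $\|{\bf g}\|_{L_\infty}\le 1$; it suffices to bound $|\int_{Q_{1/2}} D^{\beta}v^i\,\overline{g^i}\,dz|$ by $N\sum_{|\alpha|=m}\|{\bf F}_\alpha\|_{L_1(Q_{1/2})}$. I would solve the adjoint (backward) problem $-{\bf w}_t+(-1)^m\tilde{\mathcal L}_0^*{\bf w}=D^{\beta}({\bf g}\,e_i)$ in $Q_1$ with ${\bf w}=|D{\bf w}|=\dots=|D^{m-1}{\bf w}|=0$ on the lateral and terminal (here $t=0$) boundary; solvability and the $\mathcal H_p^m$-estimate come from Lemma \ref{solvability} applied to the adjoint operator (whose coefficients satisfy the same structural hypotheses). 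Integrating by parts pairs $D^{\beta}v^i$ against ${\bf g}$ with the equation for ${\bf v}$, giving $\int_{Q_{1/2}} D^{\beta}v^i\,\overline{g^i} = \pm\sum_{|\alpha|=m}\int_{Q_{1/2}} F_\alpha^k\,\overline{D^{\alpha}w^k}$, so the task becomes proving the pointwise bound $\|D^m{\bf w}\|_{L_\infty(Q_{1/2})}\le N$ for the adjoint solution with right-hand side $D^{\beta}({\bf g}\,e_i)$.

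The pointwise bound on $D^m{\bf w}$ away from the support region is where the interior regularity of Lemma \ref{lemma xn} (in the adjoint form recorded in Remark \ref{rmkadj}) enters. The standard device is a decomposition of ${\bf w}$, or rather a dyadic/Calder\'on--Zygmund-type argument on the adjoint side: for a point $\bar z\in Q_{1/2}$, split the source ${\bf g}$ into the part supported near $\bar z$ (handled by the $\mathcal H_p^m\hookrightarrow L_\infty$ estimate after choosing $p$ large, giving the right power of the local radius) and the far part, for which ${\bf w}$ solves the homogeneous adjoint equation in a neighborhood of $\bar z$ and Remark \ref{rmkadj} bounds $\|D^m{\bf w}\|_{L_\infty}$ by an $L_1$-average over a larger cylinder; summing the resulting geometric series in the dyadic scales yields $\|D^m{\bf w}\|_{L_\infty(Q_{1/2})}\le N\|{\bf g}\|_{L_\infty}$. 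Equivalently, one uses that the Green's-function kernel associated to $D^{\alpha}$ applied to the adjoint resolvent is an $L_1$-bounded (weak-$(1,1)$) operator; the cleanest route is to cite the corresponding estimate from \cite{dk2011} for the model operator $\mathcal L_0$ and transfer it to $\tilde{\mathcal L}_0$ via the cutoff structure \eqref{def-tilde-A}.

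The main obstacle I anticipate is precisely this $L_\infty$ bound on $D^m{\bf w}$ for the adjoint problem with a distributional right-hand side of the form $D^{\beta}({\bf g}\,e_i)$: one must combine the improved $\mathcal C^1$ interior regularity (Lemma \ref{lemma xn}/Remark \ref{rmkadj}) with a careful scaling/covering argument to control the contributions from all dyadic annuli around each point, and handle the boundary of $Q_1$ where ${\bf w}$ and its derivatives up to order $m-1$ vanish (using boundary versions of the interior estimates, or extending by the solvability of the modified operator $\tilde{\mathcal L}$ which is globally nice outside $B_{3/4}$). Once that $L_\infty$ bound is in hand, the duality pairing closes immediately and undoing the normalization restores the $r_0^{m-|\beta|}$ factor in \eqref{Dmv}.
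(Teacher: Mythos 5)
There is a genuine gap at the heart of your duality argument. You reduce \eqref{Dmv} to showing $\bigl|\int_{Q_{1/2}}D^{\beta}v^i\,\overline{g^i}\bigr|\leq N\sum_{|\alpha|=m}\|{\bf F}_\alpha\|_{L_1}$ for all $\|{\bf g}\|_{L_\infty}\leq1$, i.e.\ to an $L_1\to L_1$ bound for the map ${\bf F}_\alpha\mapsto D^\beta{\bf v}$, equivalently to the bound $\|D^m{\bf w}\|_{L_\infty(Q_{1/2})}\leq N$ for the adjoint solution with datum $D^\beta({\bf g}e_i)$, $|\beta|=m$. Both statements are false: this map is a Calder\'on--Zygmund-type (zeroth order) operator, which maps $L_\infty$ into BMO but not into $L_\infty$ (already for the heat operator with $m=1$), and indeed weak $(1,1)$ is the best one can hope for. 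Your dyadic repair does not save this: for the far piece supported in the annulus at scale $R=2^k\rho$, the homogeneous interior estimate plus the $L_p$-estimate gives a contribution of order $\|{\bf g}\|_{L_\infty}$ per annulus with \emph{no} decay in $k$, so the series is not geometric and diverges. The decay factor $r/R$ that makes the sum converge comes only from a cancellation you never use: in the paper one invokes the standard weak-$(1,1)$ criterion (\cite[Lemma 3.8]{dx2021}), which requires testing against localized data ${\bf b}_\alpha$ with \emph{mean zero}; this allows one to subtract $\widetilde{D}^\alpha{\bf v}_2(\bar z)$ and ${\bf V}_2(\bar z)$ in the pairing and then apply the $\mathcal C^1$-estimate of Lemma \ref{lemma xn}/Remark \ref{rmkadj} to the differences, producing the factor $r/R$ in \eqref{dilation Dv} and hence a convergent geometric series.

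Two further points. First, even with the correct mean-zero reduction, one cannot subtract the value of $D^m{\bf v}_2$ at $\bar z$, because $D_d^m{\bf v}_2$ is merely measurable in $x_d$; the paper circumvents this by inserting the matrix $\mathcal{M}$ of \eqref{DEF_M}, so that the pairing involves precisely the continuous quantities $(\widetilde{D}^\alpha{\bf v}_2,{\bf V}_2)$. Your proposal has no analogue of this device. Second, your reduction of the cases $|\beta|\leq m-1$ to $|\beta|=m$ by Poincar\'e-type inequalities is not justified in the weak-$L_1$ setting (weak $(1,1)$ bounds are not preserved by such integration arguments in any straightforward way); the paper instead runs the same duality scheme separately for each $\beta$, taking the adjoint datum to be ${\bf h}$ itself (rather than $D^\beta{\bf h}$) when $\beta=0$, which is where the factor $r_0^{m-|\beta|}$ naturally appears after scaling.
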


\begin{proof}
For simplicity, set $z_{0}=0$, $r_0=1$, and $\bar{A}^{\alpha\beta}(x_d):=\bar{A}^{\alpha\beta}(0',x_d)$. Denote $\mathcal{M}:=(M^{\alpha\beta}(x_d))$, where $M^{\alpha\beta}(x_d)$ is defined as follows: 
\begin{equation}\label{DEF_M}
\begin{split}
M^{\alpha\beta}(x_d)&=\delta_{\alpha\beta}\quad\mbox{for}~\alpha,\beta\neq \bar\alpha(=me_d);\quad
M^{\beta\bar\alpha}(x_d)=A^{\bar\alpha\beta}(x_d)\quad\mbox{for}~|\beta|=m;\\
M^{\bar\alpha\beta}(x_d)&=0\quad\mbox{for}~\beta\neq\bar\alpha.
\end{split}
\end{equation}
For any ${\bf F}_\alpha\in L_{p}(Q_{1/2})$ with $|\alpha|=m$, let $\tilde{\bf F}=\mathcal{M}{\bf F}$ and solve for ${\bf v}$ with ${\bf F}$ in place of $\tilde{\bf F}$. It follows from Lemma \ref{solvability} that for each $\beta$ with $|\beta|\leq m$, $\mathcal{O}: {\bf F}_\alpha\to D^{\beta}{\bf v}$ is a bounded linear operator on $L_{p}(Q_{1/2})$. Fix $\bar{z}=(\bar{t},\bar{y})\in Q_{1/2}$, $0<r<1/4$, and let ${\bf b}_\alpha\in L_{p}(Q_{1})$ be supported in $Q_{r}(\bar{z})\cap Q_{1/2}$ with mean zero. Set
 $\tilde{\bf b}=\mathcal{M}{\bf b}$, and let ${\bf v}_{1}\in \mathcal{H}_{p}^{m}(Q_{1})$ be the unique weak solution of
\begin{align*}
\begin{cases}
\partial_t{\bf v}_1+(-1)^{m}\tilde{\mathcal{L}_0\ }{\bf v}_{1}=\sum_{|\alpha|=m}D^{\alpha}\tilde{\bf b}_\alpha&\ \mbox{in}~Q_{1},\\
{\bf v}_{1}=|D{\bf v}_1|=\dots=|D^{m-1}{\bf v}_1|=0&\ \mbox{on}~\partial_p Q_{1}.
\end{cases}
\end{align*}
The solvability follows from Lemma \ref{solvability}. From \cite[Lemma 3.8]{dx2021}, one can see that it suffices to show
\begin{align}\label{estDv1}
\int_{Q_{1/2}\setminus \mathcal {C}_{cr}(\bar{z})}|D^{\beta}{\bf v}_{1}|\ dxdt\leq N\sum_{|\alpha|= m}\int_{Q_{r}(\bar{z})\cap Q_{1/2}}|{\bf b}_\alpha|\ dxdt,
\end{align}
where $|\beta|\leq m$, $c=24$ and $\mathcal {C}_r(t,x):=(t-r^{2m},t+r^{2m})\times B_r(x)$. We will prove the cases of $|\beta|=m$ and $|\beta|=0$ as examples since the case of $1\leq|\beta|\leq m-1$ is proved in the same manner. 

(1) We aim to prove \eqref{estDv1} with $|\beta|=m$. For any $R\geq 24r$ such that $Q_{1/2}\setminus \mathcal {C}_{R}(\bar{z})\neq\emptyset$ and ${\bf h}_\alpha\in C_{0}^{\infty}((\mathcal {C}_{2R}(\bar{z})\setminus \mathcal {C}_{R}(\bar{z}))\cap Q_{1/2})$, let ${\bf v}_{2}\in \mathcal{H}_{p'}^{m}(\mathcal {C}_{1})$ be a weak solution of
\begin{align}\label{adjsol}
\begin{cases}
-\partial_t{\bf v}_2+(-1)^{m}\tilde{\mathcal{L}_0^{*}\ }{\bf v}_{2}=\sum_{|\alpha|=m}D^{\alpha}{\bf h}_\alpha&\ \mbox{in}~\mathcal {C}_{1},\\
{\bf v}_{2}=|D{\bf v}_2|=\dots=|D^{m-1}{\bf v}_2|=0&\ \mbox{on}~((-1,1)\times \partial B_{1})\cup (\{1\}\times \overline{B_1}),
\end{cases}
\end{align}
where $\mathcal {C}_1:=(-1,1)\times B_1$, ${1}/{p}+{1}/{p'}=1$ and $\tilde{\mathcal{L}_0^{*}\ }$ is the adjoint operator of $\tilde{\mathcal{L}_0\ }$.
Then by using the definition of weak solutions, the definition of $\mathcal{M}$ in \eqref{DEF_M}, and the assumption of ${\bf b}_\alpha$, we have
\begin{align}\label{def-weak}
\sum_{|\alpha|=m}(-1)^{m}\int_{Q_{1/2}}D^\alpha{\bf v}_{1}\cdot {\bf h}_\alpha&=\sum_{|\alpha|=m}(-1)^m\int_{Q_{1/2}}D^\alpha {\bf v}_{2}\cdot \tilde{\bf b}_\alpha\nonumber\\
&=\sum_{|\alpha|=m}(-1)^{m}\int_{Q_{r}(\bar{z})\cap Q_{1/2}}
\left(\widetilde{D}^\alpha{\bf v}_{2}, {\bf V}_{2}
\right)\cdot{\bf b}_\alpha\nonumber\\
&=\sum_{|\alpha|=m}(-1)^{m}\int_{Q_{r}(\bar{z})\cap Q_{1/2}}
\left(
\widetilde{D}^\alpha{\bf v}_{2}-\widetilde{D}^\alpha{\bf v}_{2}(\bar{z}), {\bf V}_{2}-{\bf V}_{2}(\bar{z})
\right)\cdot{\bf b}_\alpha,
\end{align}
where ${\bf V}_{2}=\sum_{|\beta|= m}\bar{A}_{r,z_0}^{\bar{\alpha}\beta}(x_d)D^{\beta}{\bf v}_{2}$. Recall that $\eta\equiv1$ in $B_{2/3}$, $\bar y\in B_{1/2}$, and $R\leq1$, we have $B_{R/12}(\bar{y})\subset B_{2/3}$. Then we find that ${\bf v}_{2}\in \mathcal{H}_{p'}^{m}(\mathcal {C}_{1})$ satisfies
$$\partial_t{\bf v}_2+(-1)^{m}\bar{\mathcal{L}_0^{*}\ }{\bf v}_{2}=0\quad\mbox{in}~\mathcal {C}_{R/12}(\bar{z}),$$
where $\bar{\mathcal{L}_0^{*}\ }$ is the adjoint operator of $\bar{\mathcal{L}_0\ }$ with coefficients depending on $x_d$. By using \eqref{Dmu} and Remark \ref{rmkadj} with a suitable scaling, $r\leq R/24$, and applying the $\mathcal{H}^m_{p}$-estimate in Lemma \ref{solvability} to \eqref{adjsol}, we have
\begin{align*}
&\|\widetilde{D}^\alpha{\bf v}_{2}-\widetilde{D}^\alpha{\bf v}_{2}(\bar{z})\|_{L_{\infty}(Q_{r}(\bar{z})\cap Q_{1/2})}
+\|{\bf V}_{2}-{\bf V}_{2}(\bar{z})\|_{L_{\infty}(Q_{r}(\bar{z})\cap Q_{1/2})}\nonumber\\
&\leq N\frac{r}{R}R^{-{(d+2m)}/{p'}}\|D^m{\bf v}_{2}\|_{L_{p'}(\mathcal {C}_{R/12}(\bar{z}))}\nonumber\\
&\leq N\frac{r}{R}R^{-{(d+2m)}/{p'}}\sum_{|\alpha|=m}\|{\bf h}_\alpha\|_{L_{p'}((\mathcal {C}_{2R}(\bar{z})\setminus \mathcal {C}_{R}(\bar{z}))\cap Q_{1/2})}.
\end{align*}
This in combination with \eqref{def-weak} yields
\begin{align}\label{esti Dv h}
&\left|\sum_{|\alpha|=m}\int_{(\mathcal {C}_{2R}(\bar{z})\setminus \mathcal {C}_{R}(\bar{z}))\cap Q_{1/2}}D^\alpha{\bf v}_{1}\cdot {\bf h}_\alpha\right|\nonumber\\
&\leq \sum_{|\alpha|=m}\|{\bf b}_\alpha\|_{L_{1}(Q_{r}(\bar{z})\cap Q_{1/2})}\left|\left|\left(
\widetilde{D}^\alpha{\bf v}_{2}-\widetilde{D}^\alpha{\bf v}_{2}(\bar{z}), {\bf V}_{2}-{\bf V}_{2}(\bar{z})
\right)\right|\right|_{L_{\infty}(Q_{r}(\bar{z})\cap Q_{1/2})}\nonumber\\
&\leq N\frac{r}{R}R^{-{(d+2m)}/{p'}}\sum_{|\alpha|=m}\|{\bf b}_\alpha\|_{L_{1}(Q_{r}(\bar{z})\cap Q_{1/2})}\|{\bf h}_\alpha\|_{L_{p'}((\mathcal {C}_{2R}(\bar{z})\setminus \mathcal {C}_{R}(\bar{z}))\cap Q_{1/2})}.
\end{align}
Combining the duality and H\"{o}lder's inequality, we have
\begin{align}\label{dilation Dv}
\|D^m{\bf v}_{1}\|_{L_{1}((\mathcal {C}_{2R}(\bar{z})\setminus \mathcal {C}_{R}(\bar{z}))\cap Q_{1/2})}\leq N\frac{r}{R}\sum_{|\alpha|=m}\|{\bf b}_\alpha\|_{L_{1}(Q_{r}(\bar{z})\cap Q_{1/2})}.
\end{align}
Let $N_{0}$ be the smallest positive integer such that $Q_{1/2}\subset Q_{2^{N_{0}}cr}(\bar{z})$ with $c=24$. By taking $R=cr, 2cr,\ldots,2^{N_{0}-1}cr$ in \eqref{dilation Dv} and summarizing, we obtain
\begin{align*}
\int_{Q_{1/2}\setminus \mathcal {C}_{cr}(\bar{z})}|D^m{\bf v}_{1}|\ dx dt&\leq N\sum_{k=1}^{N_{0}}2^{-k}\sum_{|\alpha|= m}\|{\bf b}_\alpha\|_{L_{1}(Q_{r}(\bar{z})\cap Q_{1/2})}\\
&\leq N\sum_{|\alpha|=m}\int_{Q_{r}(\bar{z})\cap Q_{1/2}}|{\bf b}_\alpha|\ dxdt,
\end{align*}
where $N=N(n,d,m,p,\nu)$. This together with \cite[Lemma 3.8]{dx2021} yields \eqref{Dmv} for $|\beta|=m$.

(2) To prove \eqref{estDv1} with $\beta=0$, let ${\bf v}_{0}\in \mathcal{H}_{p'}^{m}(\mathcal {C}_{1})$ be a weak solution of
\begin{align*}
\begin{cases}
-\partial_t{\bf v}_0+(-1)^{m}\tilde{\mathcal{L}_0^{*}\ }{\bf v}_{0}={\bf h}&\ \mbox{in}~\mathcal {C}_{1},\\
{\bf v}_{0}=|D{\bf v}_0|=\dots=|D^{m-1}{\bf v}_0|=0&\ \mbox{on}~((-1,1)\times \partial B_{1})\cup (\{1\}\times \overline{B_1}),
\end{cases}
\end{align*}
where ${\bf h}\in C_{0}^{\infty}((\mathcal {C}_{2R}(\bar{z})\setminus \mathcal {C}_{R}(\bar{z}))\cap Q_{1/2})$. Then \eqref{def-weak} becomes
\begin{align*}
\int_{Q_{1/2}}{\bf v}_{1}\cdot {\bf h}&=\sum_{|\alpha|=m}(-1)^m\int_{Q_{1/2}}D^\alpha {\bf v}_{0}\cdot \tilde{\bf b}_\alpha\nonumber\\
&=\sum_{|\alpha|=m}(-1)^{m}\int_{Q_{r}(\bar{z})\cap Q_{1/2}}
\left(
\widetilde{D}^\alpha{\bf v}_{0}-\widetilde{D}^\alpha{\bf v}_{0}(\bar{z}), {\bf V}_{0}-{\bf V}_{0}(\bar{z})
\right)\cdot{\bf b}_\alpha,
\end{align*}
where ${\bf V}_{0}=\sum_{|\beta|= m}\bar{A}_{r,z_0}^{\bar{\alpha}\beta}(x_d)D^{\beta}{\bf v}_{0}$.
Similar to \eqref{esti Dv h}, we have
\begin{align*}
&\left|\int_{(\mathcal {C}_{2R}(\bar{z})\setminus \mathcal {C}_{R}(\bar{z}))\cap Q_{1/2}}{\bf v}_{1}\cdot {\bf h}\right|\nonumber\\
&\leq N\frac{r}{R}R^{-{(d+2m)}/{p'}}\sum_{|\alpha|=m}\|{\bf b}_\alpha\|_{L_{1}(Q_{r}(\bar{z})\cap Q_{1/2})}\|{\bf h}\|_{L_{p'}((\mathcal {C}_{2R}(\bar{z})\setminus \mathcal {C}_{R}(\bar{z}))\cap Q_{1/2})}.
\end{align*}
Similar to \eqref{dilation Dv}, we obtain
\begin{align*}
\|{\bf v}_{1}\|_{L_{1}((\mathcal {C}_{2R}(\bar{z})\setminus \mathcal {C}_{R}(\bar{z}))\cap Q_{1/2})}\leq N\frac{r}{R}\sum_{|\alpha|= m}\|{\bf b}_\alpha\|_{L_{1}(Q_{r}(\bar{z})\cap Q_{1/2})}.
\end{align*}
The rest of the proof of \eqref{Dmv}  with $\beta=0$ is the same as the argument below \eqref{dilation Dv} and thus is omitted. 
Therefore, the proof of the lemma is completed.
\end{proof}

Next we prove a mean oscillation estimate.
\begin{lemma}\label{estDmw}
Suppose that ${\bf w}\in \mathcal{H}_{p}^{m}(Q_{r/2}(z_{0}))$ satisfies
$${\bf w}_t+(-1)^{m}\sum_{|\alpha|=|\beta|=m}D^{\alpha}(\bar{A}_{r,z_0}^{\alpha\beta}(x_d)D^{\beta}{\bf w})=0\quad\mbox{in}~Q_{r/2}(z_{0}).$$ 
Then for any $\kappa\in\big(0,1/4\big)$ and $\Theta\in \mathbb R^{n\times \tbinom{d+m-1}{m}}$, we have
\begin{align}\label{holder w bar}
&\left(\fint_{Q_{\kappa r}(z_{0})}|\widetilde{D}^\alpha{\bf w}-(\widetilde{D}^\alpha{\bf w})_{Q_{\kappa r}(z_{0})}|^{1/2}\ dxdt+\fint_{Q_{\kappa r}(z_{0})}|{\bf W}-({\bf W})_{Q_{\kappa r}(z_{0})}|^{1/2}\ dxdt\right)^{2}\nonumber\\
&\leq N\kappa\left(\fint_{Q_{r/2}(z_{0})}|(\widetilde{D}^\alpha{\bf w},{\bf W})-\Theta|^{1/2}\ dxdt\right)^{2},
\end{align}
where $\widetilde{D}^\alpha{\bf w}$ is the collection of $D^\alpha{\bf w}$, $|\alpha|=m$, $\alpha\neq me_d$, ${\bf W}=\sum_{|\beta|=m}\bar{A}_{r,z_0}^{\bar{\alpha}\beta}(x_d)D^{\beta}{\bf w}$, and $N=N(n,d,m,p,\nu,\Lambda)$.
\end{lemma}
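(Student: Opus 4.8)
The plan is to establish \eqref{holder w bar} by combining the interior regularity bound \eqref{Dmu} of Lemma \ref{lemma xn} with a scaling/normalization argument and the observation that the estimate is invariant under subtracting affine-type quantities. First I would reduce to the normalized setting $z_0=0$, $r=1$ by the parabolic scaling $(t,x)\mapsto(r^{2m}t,rx)$, under which the operator with coefficients $\bar A^{\alpha\beta}_{r,z_0}(x_d)$ transforms into one of the same type on $Q_{1/2}$; note that $D^\alpha{\bf w}$ for $|\alpha|=m$ is scaling-invariant up to a constant factor and that ${\bf W}$ is a fixed linear combination of the $D^\beta{\bf w}$, so both sides of \eqref{holder w bar} scale the same way. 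So it suffices to prove, for ${\bf w}$ solving the homogeneous equation in $Q_{1/2}$,
\begin{align*}
&\left(\fint_{Q_{\kappa}}|\widetilde{D}^\alpha{\bf w}-(\widetilde{D}^\alpha{\bf w})_{Q_{\kappa}}|^{1/2}+\fint_{Q_{\kappa}}|{\bf W}-({\bf W})_{Q_{\kappa}}|^{1/2}\right)^{2}\\
&\quad\leq N\kappa\left(\fint_{Q_{1/2}}|(\widetilde{D}^\alpha{\bf w},{\bf W})-\Theta|^{1/2}\right)^{2}.
\end{align*}

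Next I would exploit translation invariance in the $x'$ and $t$ directions: since $\bar A^{\alpha\beta}_{r,z_0}$ depends only on $x_d$, for any constant $\Theta$ (viewed as a candidate value of $(\widetilde{D}^\alpha{\bf w},{\bf W})$) the natural step is to subtract off a suitable polynomial. Concretely, one writes $\Theta=(\widetilde\Theta,\Theta_{\bf U})$ and seeks a vector-valued polynomial ${\bf P}$, linear in $x'$ and $t$ (and of degree $m$ overall, chosen so that $D^\alpha{\bf P}$ for $\alpha\neq me_d$ matches $\widetilde\Theta$ and $\sum_\beta\bar A^{\bar\alpha\beta}D^\beta{\bf P}$ matches $\Theta_{\bf U}$), such that ${\bf w}-{\bf P}$ still solves the homogeneous equation — this uses that $\mathcal{L}_0$ annihilates polynomials of the relevant form when coefficients are $x_d$-only, together with the fact that $D_d^m$ applied to such a ${\bf P}$ is determined by $\Theta_{\bf U}$ via the ellipticity-invertibility of $\bar A^{\bar\alpha\bar\alpha}$ as in the proof of Lemma \ref{lemma xn}. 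Then $\widetilde{D}^\alpha({\bf w}-{\bf P})=\widetilde{D}^\alpha{\bf w}-\widetilde\Theta$ and the corresponding ${\bf W}$-quantity for ${\bf w}-{\bf P}$ equals ${\bf W}-\Theta_{\bf U}$, so replacing ${\bf w}$ by ${\bf w}-{\bf P}$ we may assume $\Theta=0$ on the right-hand side.

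With $\Theta=0$ it remains to bound the left side by $N\kappa\,(\fint_{Q_{1/2}}|(\widetilde{D}^\alpha{\bf w},{\bf W})|^{1/2})^2$. For this I would use the oscillation control
\begin{align*}
\fint_{Q_{\kappa}}\big|\widetilde{D}^\alpha{\bf w}-(\widetilde{D}^\alpha{\bf w})_{Q_{\kappa}}\big|^{1/2}+\fint_{Q_{\kappa}}\big|{\bf W}-({\bf W})_{Q_{\kappa}}\big|^{1/2}\le N\kappa^{1/2}\big([\widetilde{D}^\alpha{\bf w}]_{\mathcal C^1(Q_{1/4})}^{1/2}+[{\bf W}]_{\mathcal C^1(Q_{1/4})}^{1/2}\big),
\end{align*}
which follows because a $\mathcal C^1$ (parabolic Lipschitz) function has oscillation $O(\kappa)$ on $Q_\kappa$; squaring gives the factor $\kappa$. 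Then I apply Lemma \ref{lemma xn}, specifically \eqref{Dmu}, with $p=1/2$ — this is exactly why the lemma was proved for all $p>0$ — to get $[\widetilde{D}^\alpha{\bf w}]_{\mathcal C^1(Q_{1/4})}+[{\bf W}]_{\mathcal C^1(Q_{1/4})}\le N\|D^m{\bf w}\|_{L_{1/2}(Q_{1/2})}$; since ${\bf W}$ is a linear combination of $D^m{\bf w}$ and, by the invertibility argument above, $D_d^m{\bf w}$ is controlled by $(\widetilde{D}^\alpha{\bf w},{\bf W})$, one has $\|D^m{\bf w}\|_{L_{1/2}(Q_{1/2})}\le N\|(\widetilde{D}^\alpha{\bf w},{\bf W})\|_{L_{1/2}(Q_{1/2})}$, which is (a constant times) the right-hand side of the normalized inequality. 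Here $Q_{1/4}$ versus $Q_{1/2}$ is harmless after a further trivial scaling. I expect the main technical obstacle to be the bookkeeping in the polynomial-subtraction step: verifying that the map $\Theta\mapsto{\bf P}$ is well-defined and that ${\bf w}-{\bf P}$ genuinely solves the homogeneous equation requires care with which multi-indices appear and with the $x_d$-dependence of the coefficients — but this is precisely the structure already used in Lemma \ref{lemma xn}, so no new idea is needed beyond organizing it cleanly. The use of the $L_{1/2}$ (more generally $L_p$, $p<1$) norm and Campanato-type quasi-norm manipulations is standard here, following \cite{d2012,dx2021}.
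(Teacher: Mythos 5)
Your proposal is correct and follows essentially the same route as the paper: subtract a $\Theta$-dependent corrector that keeps the homogeneous equation and shifts $(\widetilde{D}^\alpha{\bf w},{\bf W})$ by $\Theta$, then combine the $\mathcal{C}^1$ estimate of Lemma \ref{lemma xn} applied with $p=1/2$ (oscillation $O(\kappa)$ on $Q_{\kappa r}$) with the invertibility of $\bar{A}^{\bar\alpha\bar\alpha}(x_d)$ to control $D^m{\bf w}$ by $(\widetilde{D}^\alpha{\bf w},{\bf W})$ in the $L_{1/2}$ quasinorm. The only slip is calling the corrector a polynomial: in the paper it is ${\bf F}+{\bf H}(x_d)$, where ${\bf H}$ is an $m$-fold $x_d$-integral of $(\bar{A}^{\bar\alpha\bar\alpha}(s))^{-1}\big(\Theta_{\bar\alpha}-\sum_{\beta\neq me_d}\bar{A}^{\bar\alpha\beta}(s)\Theta_\beta\big)$, hence not a polynomial (and independent of $t$), precisely because the coefficients are merely measurable in $x_d$ — exactly the bookkeeping you flagged, and it does not affect the argument.
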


\begin{proof}
For any $\kappa\in\big(0,1/4\big)$, by Lemma \ref{lemma xn} with a suitable scaling, we have
\begin{align}\label{DW kappa}
&\|\widetilde{D}^\alpha{\bf w}-(\widetilde{D}^\alpha{\bf w})_{Q_{\kappa r}(z_{0})}\|_{L_{1/2}(Q_{\kappa r}(z_{0}))}^{1/2}+\|{\bf W}-({\bf W})_{Q_{\kappa r}(z_{0})}\|_{L_{1/2}(Q_{\kappa r}(z_{0}))}^{1/2}\nonumber\\
&\leq N\kappa^{d+2m+\frac{1}{2}}\int_{Q_{r/2}(z_{0})}|D^m{\bf w}|^{1/2}\ dxdt\nonumber\\
&\leq N\kappa^{d+2m+\frac{1}{2}}\int_{Q_{r/2}(z_{0})}|(\widetilde{D}^\alpha{\bf w},{\bf W})|^{1/2}\ dxdt.
\end{align}
For $\Theta\in \mathbb R^{n\times \tbinom{d+m-1}{m}}$, define
\begin{align*}
{\bf H}(x_d):={\bf H}_{r,z_0}(x_d)&=\int_{0}^{x_d}\int_0^{s_{m-1}}\dots\int_0^{s_{1}}\Big(\bar{A}_{r,z_0}^{\bar{\alpha}\bar\alpha}(s)\Big)^{-1}
\Big(\Theta_{\bar{\alpha}}-\\
&\qquad\sum_{|\beta|=m,~\beta\neq me_d}\bar{A}_{r,z_0}^{\bar{\alpha}\beta}(s)\Theta_{\beta}\Big)\ ds ds_1\dots ds_{m-1},
\end{align*}
where $\bar\alpha=me_d$. Then $D^{\beta}{\bf H}(x_d)=0$ for $|\beta|=m,~\beta\neq me_d$, and 
\begin{align}\label{DbetaH}
\sum_{|\beta|=m}\bar{A}_{r,z_0}^{\bar{\alpha}\beta}(x_d)D^{\beta}{\bf H}(x_d)&=\bar{A}_{r,z_0}^{\bar{\alpha}\bar\alpha}(x_d)D_d^{m}{\bf H}(x_d)+\sum_{|\beta|=m,~\beta\neq me_d}\bar{A}_{r,z_0}^{\bar{\alpha}\beta}(x_d)D^{\beta}{\bf H}(x_d)\nonumber\\
&=\bar{A}_{r,z_0}^{\bar{\alpha}\bar\alpha}(x_d)D_d^{m}{\bf H}(x_d)\nonumber\\
&=\Theta_{\bar{\alpha}}-\sum_{|\beta|=m,~\beta\neq me_d}\bar{A}_{r,z_0}^{\bar{\alpha}\beta}(x_d)\Theta_{\beta}.
\end{align}

For $\beta=(\beta_1,\dots,\beta_d)$, set
\begin{equation*}
{\bf F}=\sum_{|\beta|=m,~\beta\neq me_d}\frac{x_1^{\beta_1}\dots x_d^{\beta_d}}{\beta_1!\dots\beta_d!}\Theta_\beta
\end{equation*}
such that 
\begin{equation}\label{DbetaF}
D^{\beta}{\bf F}=\Theta_\beta,~|\beta|=m,~\beta\neq me_d,\quad D_d^m{\bf F}=0.
\end{equation}
Define
\begin{align*}
\tilde{{\bf w}}:={\bf w}-{\bf F}-{\bf H}(x_d).
\end{align*}
Then a direct calculation yields 
$$\widetilde{D}^\alpha\tilde{\bf w}=\widetilde{D}^\alpha{\bf w}-\Theta_\alpha,~|\alpha|=m,~\alpha\neq me_d,\quad \tilde{\bf W}:=\sum_{{|\beta|=m}}\bar{A}_{r,z_0}^{\bar{\alpha}\beta}(x_d)D^{\beta}\tilde{\bf w}={\bf W}-\Theta_{\bar{\alpha}}.$$
Using \eqref{DbetaH} and \eqref{DbetaF} , we obtain in $Q_{r/2}(z_{0})$,
\begin{align*}
&\tilde{\bf w}_t+(-1)^{m}\sum_{|\alpha|=|\beta|=m}D^{\alpha}(\bar{A}_{r,z_0}^{\alpha\beta}(x_d)D^{\beta}\tilde{\bf w})\\
&=(-1)^{m+1}\sum_{\substack{|\alpha|=|\beta|=m\\\beta\neq me_d}}D^{\alpha}(\bar{A}_{r,z_0}^{\alpha\beta}(x_d)D^{\beta}{\bf F})+(-1)^{m+1}\sum_{|\alpha|=m}D^{\alpha}(\bar{A}_{r,z_0}^{\alpha\bar\alpha}(x_d)D_d^{m}{\bf H}(x_d))\\
&=(-1)^{m+1}\sum_{|\beta|=m,~\beta\neq me_d}D^{\bar\alpha}(\bar{A}_{r,z_0}^{\bar\alpha\beta}(x_d)\Theta_{\beta})+(-1)^{m+1}D^{\bar\alpha}(\bar{A}_{r,z_0}^{\bar\alpha\bar\alpha}(x_d)D_d^{m}{\bf H}(x_d))\\
&=(-1)^{m+1}\sum_{|\beta|=m,~\beta\neq me_d}D^{\bar\alpha}(\bar{A}_{r,z_0}^{\bar\alpha\beta}(x_d)\Theta_{\beta})+(-1)^{m+1}D^{\bar\alpha}\Big(\Theta_{\bar{\alpha}}-\sum_{|\beta|=m,~\beta\neq me_d}\bar{A}_{r,z_0}^{\bar{\alpha}\beta}(x_d)\Theta_{\beta}\Big)\\
&=0.
\end{align*}
Now replacing ${\bf w}$ and ${\bf W}$ with $\tilde{\bf w}$ and $\tilde{\bf W}$ in \eqref{DW kappa}, respectively, we have
\begin{align*}
&\|\widetilde{D}^\alpha{\bf w}-(\widetilde{D}^\alpha{\bf w})_{Q_{\kappa r}(z_{0})}\|_{L_{1/2}(Q_{\kappa r}(z_{0}))}^{1/2}+\|{\bf W}-({\bf W})_{Q_{\kappa r}(z_{0})}\|_{L_{1/2}(Q_{\kappa r}(z_{0}))}^{1/2}\nonumber\\
&\leq N\kappa^{d+2m+\frac{1}{2}}\int_{Q_{r/2}(z_{0})}|(\widetilde{D}^\alpha{\bf w}-\Theta_\alpha,{\bf W}-\Theta_{\bar{\alpha}})|^{1/2}\ dxdt\\
&= N\kappa^{d+2m+\frac{1}{2}}\int_{Q_{r/2}(z_{0})}|(\widetilde{D}^\alpha{\bf w},{\bf W})-\Theta|^{1/2}\ dxdt.
\end{align*}
Thus, we derive \eqref{holder w bar} and the proof is finished.
\end{proof}

\section{Proof of Theorem \ref{mainthm}}\label{modulus}

In this section, we first show that the estimate of $[{\bf u}]_{1/2,m}$ is reduced to the estimates of $\|D^{k}{\bf u}\|_{L_{\infty}}$ and $\|{\bf f}_{\alpha}\|_{L_{\infty}}$, where $k=0,\dots,m$, and $|\alpha|\leq m$.

\begin{lemma}\label{lemmaC12}
Let ${\bf u}\in \mathcal{H}_{p}^{m}(\cQ)$ be a weak solution to \eqref{systems} in $\cQ$. Suppose that $\|D^k{\bf u}\|_{L_{\infty}(Q_{1/2})}<\infty$, $k=0,\dots,m$. Then
\begin{equation}\label{estclaim}
[{\bf u}]_{1/2,m;Q_{1/4}}\leq N\left(\sum_{k=0}^{m}\|D^k{\bf u}\|_{L_{\infty}(Q_{1/2})}
+\sum_{|\alpha|\leq m}\|{\bf f}_{\alpha}\|_{L_{\infty}(Q_{1/2})}\right).
\end{equation}
\end{lemma}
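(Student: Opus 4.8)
The plan is to reduce the bound to a single pure time-oscillation estimate and then extract that estimate from the equation \eqref{systems} by testing against a mollifier with enough vanishing moments. Write $\mathcal E_0$ for the right-hand side of \eqref{estclaim}. Because $\|D^m{\bf u}\|_{L_\infty(Q_{1/2})}<\infty$, for a.e.\ $t$ the slice ${\bf u}(t,\cdot)$ lies in $C^{m-1,1}$ on balls inside $B_{1/2}$ with the relevant norm bounded by $\|D^m{\bf u}\|_{L_\infty(Q_{1/2})}$; consequently every contribution to $[{\bf u}]_{1/2,m;Q_{1/4}}$ coming from a spatial increment is controlled by Taylor's formula, and it suffices to prove that for each multi-index $\alpha$ with $|\alpha|\le m-1$ and all $(t,x),(s,x)\in Q_{1/4}$ with $s<t$,
\[
|D^\alpha{\bf u}(t,x)-D^\alpha{\bf u}(s,x)|\le N\mathcal E_0\,(t-s)^{\frac{m-|\alpha|}{2m}}.
\]
(In particular $\alpha=0$ gives $\langle{\bf u}\rangle_{1/2;Q_{1/4}}\le N\mathcal E_0$.) Fix such points and set $\rho:=(t-s)^{1/(2m)}$. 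Since $t,s\in(-(1/4)^{2m},0)$ we automatically have $\rho<1/4$, so $\overline{Q_\rho(t,x)}\subset Q_{1/2}$ and $(s,x)$ lies on the bottom of this cylinder; in particular no separate ``large $\rho$'' case arises.

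Next I would fix once and for all $\psi\in C_0^\infty(B_1)$ with $\int_{B_1}\psi=1$ and $\int_{B_1}y^\gamma\psi(y)\,dy=0$ for $1\le|\gamma|\le m-1$, and set $\psi_\rho(y):=\rho^{-d}\psi((y-x)/\rho)$, supported in $B_\rho(x)$, so that $\|D^\sigma\psi_\rho\|_{L_1}\le N\rho^{-|\sigma|}$ for every $\sigma$. Define
\[
\Phi_\alpha(\tau):=\int D^\alpha{\bf u}(\tau,y)\,\psi_\rho(y)\,dy=(-1)^{|\alpha|}\int {\bf u}(\tau,y)\,D^\alpha\psi_\rho(y)\,dy,
\]
and estimate the increment by comparing both $D^\alpha{\bf u}(t,x)$ and $D^\alpha{\bf u}(s,x)$ to $\Phi_\alpha$. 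For the spatial comparison at $\tau\in\{s,t\}$, write $D^\alpha{\bf u}(\tau,x)-\Phi_\alpha(\tau)=\int\big(D^\alpha{\bf u}(\tau,x)-D^\alpha{\bf u}(\tau,y)\big)\psi_\rho(y)\,dy$, Taylor-expand $D^\alpha{\bf u}(\tau,\cdot)$ about $x$ to order $m-|\alpha|-1$, kill the polynomial part using the moment conditions on $\psi$, and bound the remainder by $N|y-x|^{m-|\alpha|}\|D^m{\bf u}\|_{L_\infty(Q_{1/2})}$; this yields $|D^\alpha{\bf u}(\tau,x)-\Phi_\alpha(\tau)|\le N\rho^{m-|\alpha|}\|D^m{\bf u}\|_{L_\infty(Q_{1/2})}$ for $\tau\in\{s,t\}$. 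For the temporal comparison, $\Phi_\alpha$ is Lipschitz in $\tau$ with $\Phi_\alpha'(\tau)=(-1)^{|\alpha|}\langle{\bf u}_\tau(\tau,\cdot),D^\alpha\psi_\rho\rangle$, and by the weak form of \eqref{systems} tested against the smooth, compactly supported $D^\alpha\psi_\rho$ and integration by parts,
\[
\langle{\bf u}_\tau,D^\alpha\psi_\rho\rangle=-(-1)^{m}\sum_{|\gamma|\le m,\,|\beta|\le m}(-1)^{|\gamma|}\int A^{\gamma\beta}D^\beta{\bf u}\cdot D^{\gamma+\alpha}\psi_\rho+\sum_{|\gamma|\le m}(-1)^{|\gamma|}\int{\bf f}_\gamma\cdot D^{\gamma+\alpha}\psi_\rho.
\]
Using $|A^{\gamma\beta}|\le N$, $\|D^\beta{\bf u}\|_{L_\infty(Q_{1/2})}\le\mathcal E_0$ for $|\beta|\le m$, $\|{\bf f}_\gamma\|_{L_\infty(Q_{1/2})}\le\mathcal E_0$, and $\|D^{\gamma+\alpha}\psi_\rho\|_{L_1}\le N\rho^{-|\gamma|-|\alpha|}\le N\rho^{-m-|\alpha|}$, this gives $|\langle{\bf u}_\tau,D^\alpha\psi_\rho\rangle|\le N\mathcal E_0\rho^{-m-|\alpha|}$ uniformly in $\tau\in(s,t)$, whence $|\Phi_\alpha(t)-\Phi_\alpha(s)|\le N\mathcal E_0\rho^{-m-|\alpha|}(t-s)=N\mathcal E_0\rho^{m-|\alpha|}$. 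The triangle inequality combines the two comparisons into the displayed time-oscillation bound, and summing over $\alpha$ and taking suprema gives \eqref{estclaim}.

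The one delicate point is the meaning of the pointwise values of $D^\alpha{\bf u}$: these should be interpreted through the spatially Lipschitz representatives available for a.e.\ $t$, and the estimate just obtained then upgrades these to jointly continuous representatives. Rigorously this is handled by a routine spatial mollification of ${\bf u}$, under which all the bounds above are uniform, and I would omit the details.

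The step I expect to be the main obstacle — and the reason the higher-order case $m\ge 2$ is genuinely different from the classical second-order one — is the spatial comparison. A positive mollifier (plain averaging over $B_\rho(x)$) only gives $|D^\alpha{\bf u}(\tau,x)-\Phi_\alpha(\tau)|\lesssim\rho\,\|D^{|\alpha|+1}{\bf u}\|_{L_\infty}$, and since $\rho<1$ one has $\rho>\rho^{m-|\alpha|}$ as soon as $m-|\alpha|\ge 2$ (which already happens for $\alpha=0$), so this naive bound has the wrong power of $\rho$. Exploiting the full $C^{m-1,1}$ spatial regularity encoded in $\|D^m{\bf u}\|_{L_\infty}$ through a mollifier with $m-1$ vanishing moments is precisely what restores the sharp exponent $\tfrac{m-|\alpha|}{2m}$; when $m=1$ no moment conditions are needed, consistent with the second-order theory.
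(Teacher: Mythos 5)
Your argument is correct, but it takes a genuinely different route from the paper. The paper's proof first absorbs the lower-order terms of \eqref{systems} into the right-hand side, then applies the mean-oscillation estimate of \cite[Lemma 3.3]{dk2011} (see also \cite[Lemma 3.1]{k2007}) to get, for each $z_0\in\overline{Q}_{1/4}$, $r\in(0,1/4)$ and $s=0,\dots,m-1$, a bound of the form $\int_{Q_r(z_0)}|D^s{\bf u}-D^s{\bf P}|^2\,dz\le Nr^{d+2(2m-s)}\mathcal{E}_0^2$ with ${\bf P}$ a spatial polynomial of degree $m-1$ matching the averages of $D^s{\bf u}$, and then concludes \eqref{estclaim} from Campanato's characterization of H\"older continuity (\cite[Lemma 3.1]{dz2015}, \cite[Lemma 4.3]{l1996}). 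You instead prove the pure time-oscillation estimates $|D^\alpha{\bf u}(t,x)-D^\alpha{\bf u}(s,x)|\le N\mathcal{E}_0|t-s|^{(m-|\alpha|)/(2m)}$, $|\alpha|\le m-1$, directly: the mollifier with $m-1$ vanishing moments makes the spatial comparison cost only $\rho^{m-|\alpha|}\|D^m{\bf u}\|_{L_\infty}$, while testing the weak formulation with $D^\alpha\psi_\rho$ controls the time variation of the average by $N\mathcal{E}_0\rho^{-m-|\alpha|}(t-s)=N\mathcal{E}_0\rho^{m-|\alpha|}$; spatial increments are then absorbed by Taylor's formula and the $L_\infty$ bounds on $D^k{\bf u}$. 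Your exponents, the inclusion $Q_\rho(t,x)\subset Q_{1/2}$, and the use of boundedness of all coefficients check out, and the reduction to time increments is legitimate under the standard (integer-order) meaning of the $[\,\cdot\,]_{1/2,m}$ seminorm, which contains sup bounds, time-H\"older seminorms with the anisotropic exponents, and nothing stronger than Lipschitz continuity in $x$ of $D^{m-1}{\bf u}$. What your approach buys is a self-contained, pointwise argument with no Campanato-type lemma and no $L_2$ comparison estimate; what it costs is the (correctly flagged) technicalities about a.e.\ representatives and the absolute continuity of $\tau\mapsto\int{\bf u}(\tau,\cdot)D^\alpha\psi_\rho$, which is standard for $\mathcal{H}^m_p$ solutions. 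The paper's route stays within the Campanato framework used throughout and is more robust if one only has integral (rather than $L_\infty$) control of the data, but under the hypotheses of the lemma both arguments deliver \eqref{estclaim}.
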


\begin{proof}
To prove \eqref{estclaim}, we rewrite \eqref{systems} as
\begin{align*}
{\bf u}_{t}+(-1)^{m}\sum_{|\alpha|=|\beta|= m}D^{\alpha}(A^{\alpha\beta}D^{\beta}{\bf u})&=\sum_{|\alpha|\leq m}D^{\alpha}{\bf f}_{\alpha}+(-1)^{m+1}\sum_{|\alpha|=m, ~|\beta|\leq m-1}D^{\alpha}(A^{\alpha\beta}D^{\beta}{\bf u})\\
&\quad+(-1)^{m+1}\sum_{1\leq|\alpha|\leq m-1,~ |\beta|\leq m}D^{\alpha}(A^{\alpha\beta}D^{\beta}{\bf u})\\
&\quad+(-1)^{m+1}\sum_{\alpha=0,~ |\beta|\leq m}A^{\alpha\beta}D^{\beta}{\bf u}\quad \mbox{in}~Q_{1}.
\end{align*}
Denote 
\begin{align*}
{\bf g}_{1,\alpha}:=\sum_{|\beta|\leq m-1}A^{\alpha\beta}D^{\beta}{\bf u}\quad\mbox{for}~|\alpha|=m,\quad {\bf g}_{2,\alpha}:=\sum_{|\beta|\leq m}A^{\alpha\beta}D^{\beta}{\bf u}\quad\mbox{for}~1\leq|\alpha|\leq m-1,
\end{align*}
and for $\alpha=0$,
$${\bf g}:=\sum_{|\beta|\leq m}A^{\alpha\beta}D^{\beta}{\bf u}.$$
Now let us fix $z_0\in\overline{Q}_{1/4}$ and take $r\in(0,1/4)$. By using a similar argument in the proof of \cite[Lemma 3.3]{dk2011} (see also, \cite[Lemma 3.1]{k2007}), we obtain, for any $s=0,\dots,m-1$,
\begin{align*}
\int_{Q_r(z_0)}|D^s{\bf u}-D^s{\bf P}|^2\ dz&\leq Nr^{2(m-s)}\Bigg(\sum_{|\alpha|\leq m}r^{2(m-|\alpha|)}\int_{Q_r(z_0)}|{\bf f}_{\alpha}|^2\ dz+\sum_{|\alpha|= m}\int_{Q_r(z_0)}|{\bf g}_{1,\alpha}|^2\ dz\\
&\quad+\sum_{1\leq|\alpha|\leq m-1}r^{2(m-|\alpha|)}\int_{Q_r(z_0)}|{\bf g}_{2,\alpha}|^2\ dz+r^{2m}\int_{Q_r(z_0)}|{\bf g}|^2\ dz\Bigg)\\
&\leq Nr^{d+2(2m-s)}\left(\sum_{k=0}^{m}\|D^k{\bf u}\|_{L_\infty(Q_{1/2})}^2+\sum_{|\alpha|\leq m}\|{\bf f}_{\alpha}\|_{L_\infty(Q_{1/2})}^2\right),
\end{align*}
where ${\bf P}:={\bf P}(x)$ is the vector-valued polynomial of order $m-1$ such that for $s=0,\dots,m-1$,
$$(D^s{\bf P})_{Q_r(z_0)}=(D^s{\bf u})_{Q_r(z_0)}.$$
Due to Campanato's characterization of H\"{o}lder continuous functions (see, for instance, \cite[Lemma 3.1]{dz2015} and \cite[Lemma 4.3]{l1996}), we derive \eqref{estclaim}.
\end{proof}

Next, we shall establish an estimate of the modulus of continuity of $(\widetilde{D}^\alpha{\bf u},{\bf U})$. See Proposition \ref{main prop} below. For this, we will localize the problem by taking $\cQ$ to be a unit cylinder $Q_{1}:=(-1,0)\times B_{1}$ and $z_{0}=(t_{0},x_{0})\in Q_{3/4}$. By suitable rotation and scaling, we may suppose that a finite number of subdomains lie in $Q_{1}$ and that they can be represented by
$$x_{d}=h_{j}(t,x'),\quad\forall~t\in(-1,0),~x'\in B'_{1},~j=1,\ldots,l(<M),$$
where
\begin{equation*}
-1<h_{1}(t,x')<\dots<h_{l}(t,x')<1,
\end{equation*}
$h_{j}(t,\cdot)\in C^{1,\text{Dini}}(B'_{1})$, and $h_{j}(\cdot,x')\in C^{\gamma_{0}}(-1,0)$, where $\gamma_{0}>\frac{1}{2m}$. Set $h_{0}(t,x')=-1$ and $h_{l+1}(t,x')=1$. Then we have $l+1$ regions:
\begin{equation}\label{def-subdomain}
\cQ_{j}:=\{(t,x)\in \cQ: h_{j-1}(t,x')<x_{d}<h_{j}(t,x')\},\quad1\leq j\leq l+1.
\end{equation} 
We may suppose that there exists some $\cQ_{j_{0}}$, such that $(t_{0},x_{0})\in Q_{3/4}\cap \cQ_{j_{0}}$ and the closest point on $\partial_p \cQ_{j_{0}}\cap\{t=t_0\}$ to $(t_{0},x_{0})$ is $(t_{0},x'_{0},h_{j_{0}}(t_{0},x'_{0}))$, and $\nabla_{x'}h_{j_{0}}(t_{0},x'_{0})=0'$.
We introduce the $l+1$ ``strips"
\begin{equation*}
\Omega_{j}:=\{(t,x)\in \cQ: h_{j-1}(t_{0},x'_{0})<x_{d}<h_{j}(t_{0},x'_{0})\},\quad1\leq j\leq l+1.
\end{equation*}

As explained in the Appendix \ref{Append}, we can move the lower order terms  to the right-hand side of \eqref{systems}. Thus, in the sequel, we assume that all the lower order coefficients  are zero and consider the following parabolic systems
\begin{equation}\label{systemmain}
{\bf u}_t+(-1)^m\sum_{|\alpha|=|\beta|= m}D^{\alpha}(A^{\alpha\beta}D^{\beta}{\bf u})=\sum_{|\alpha|=m}D^{\alpha}{\bf f}_{\alpha}\quad\mbox{in}~Q_1.
\end{equation}

Recall that for each $z_0$, the coordinate system is chosen according to it. In the coordinate system associated with $z_0$, we now define
$$
\Phi(z_{0},r)=\inf_{\Theta\in\mathbb R^{n\times \tbinom{d+m-1}{m}}}\left(\fint_{Q_{r}(z_{0})}|(\widetilde{D}^\alpha{\bf u},{\bf U})-\Theta|^{\frac{1}{2}}\ dxdt\right)^{2},$$
where ${\bf U}$ is defined in \eqref{defU} with $|\beta|=m$. Let $\hat{A}^{\alpha\beta}_{(j)}\in\mathcal{A}$ be a constant function in $\cQ_{j}$ corresponding to the definition of $\omega_{A^{\alpha\beta}}(r)$ in \eqref{def omega A}. Define the piecewise constant functions
\begin{align*}
\bar{A}_{r,z_0}^{\alpha\beta}(t,x)=
\hat{A}^{\alpha\beta}_{(j)},\quad (t,x)\in\Omega_{j}.
\end{align*}
By \cite[(3.5)]{dx2021}, we have
\begin{align}\label{est A}
\fint_{Q_{r}(z_{0})}|\hat{A}^{\alpha\beta}-\bar{A}_{r,z_0}^{\alpha\beta}|\ dxdt\leq N\hat\omega_1(r),
\end{align}
where $\hat\omega_1(r)=\omega_1(r)+r^{2m\gamma_0-1}$, and $\omega_1(r)$ is a Dini function; see \cite[Lemma 3.3]{dx2021} for the detail. Using Lemmas \ref{weak est barv} and \ref{estDmw}, we shall prove the following result.

\begin{lemma}\label{lemma itera}
For any $\gamma\in (0,1)$ and $0<\rho\leq r\leq 1/4$, we have
\begin{align}\label{est phi'}
\Phi(z_{0},\rho)\leq N\Big(\frac{\rho}{r}\Big)^{\gamma}r^{-(d+2m)}\|(\widetilde{D}^\alpha{\bf u},{\bf U})\|_{L_{1}(Q_{r}(z_{0}))}
+N\tilde{\omega}_{A^{\alpha\beta}}(\rho)\|D^m{\bf u}\|_{L_{\infty}(Q_{r}(z_{0}))}+N\tilde{\omega}_{{\bf f}_{\alpha}}(\rho),
\end{align}
where $N=N(n,d,m,p,\nu,\Lambda,\gamma)$, and $\tilde\omega_{\bullet}(t)$ is a Dini function derived from $\omega_{\bullet}(t)$.
\end{lemma}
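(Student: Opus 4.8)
\textbf{Proof proposal for Lemma \ref{lemma itera}.}

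The plan is to run a Campanato-type iteration in the $L_{1/2}$-setting, comparing the solution ${\bf u}$ of \eqref{systemmain} on each dyadic scale with the solution ${\bf w}$ of a frozen-coefficient problem and with the solution ${\bf v}$ produced by Lemma \ref{weak est barv}. First I would fix $z_0$ in its associated coordinate system and, for a scale $r\le 1/4$, decompose ${\bf u}$ on $Q_{r/2}(z_0)$ as ${\bf u}={\bf v}+{\bf w}$, where ${\bf v}$ solves the problem in Lemma \ref{weak est barv} with right-hand side $\sum_{|\alpha|=m}D^\alpha({\bf F}_\alpha\chi_{Q_{r/2}(z_0)})$ and ${\bf F}_\alpha$ chosen to encode both the coefficient oscillation $(A^{\alpha\beta}-\bar A_{r,z_0}^{\alpha\beta})D^\beta{\bf u}$ and the data oscillation ${\bf f}_\alpha-({\bf f}_\alpha)_{Q_{r/2}(z_0)}$ (plus a piece coming from \eqref{est A}), while ${\bf w}={\bf u}-{\bf v}$ then solves the homogeneous frozen equation $ {\bf w}_t+(-1)^m\sum_{|\alpha|=|\beta|=m}D^\alpha(\bar A_{r,z_0}^{\alpha\beta}(x_d)D^\beta{\bf w})=0$ on $Q_{r/2}(z_0)$, to which Lemma \ref{estDmw} applies.

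Next I would estimate $\Phi(z_0,\kappa r)$ for $\kappa\in(0,1/4)$ by the triangle inequality: the contribution of $\widetilde{D}^\alpha{\bf w}$ and ${\bf W}$ is controlled by Lemma \ref{estDmw} with $\Theta$ the minimizer in $\Phi(z_0,r/2)$, giving a factor $N\kappa$ times $\Phi(z_0,r/2)^{1/2}$-type quantity; the contribution of ${\bf v}$ is handled by the weak-type $(1,1)$ estimate \eqref{Dmv}, which after the standard Chebyshev/weak-$L_1$ interpolation argument (as in \cite{dx2021}) yields the $L_{1/2}$ bound $\fint_{Q_{\kappa r}(z_0)}|D^\beta{\bf v}|^{1/2}\lesssim (\kappa r)^{-(d+2m)/2}\|{\bf F}_\alpha\|_{L_1(Q_{r/2}(z_0))}^{1/2}$ divided appropriately by powers of $\kappa$. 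Here $\widetilde{D}^\alpha{\bf v}$ and ${\bf V}:=\sum_{|\beta|=m}A^{\bar\alpha\beta}D^\beta{\bf v}-(-1)^m$(the data piece) must be related to $(\widetilde{D}^\alpha{\bf u},{\bf U})-(\widetilde{D}^\alpha{\bf w},{\bf W})$ via the algebraic identities for ${\bf U}$ in \eqref{defU}, exactly as in the computations following \eqref{DEF_M}; this bookkeeping, together with bounding $\|{\bf F}_\alpha\|_{L_1}$ by $\omega_{A^{\alpha\beta}}(r/2)\|D^m{\bf u}\|_{L_\infty(Q_{r/2}(z_0))}+\omega_{{\bf f}_\alpha}(r/2)\,r^{d+2m}+\hat\omega_1(r)\,(\cdots)$ using \eqref{est A}, produces a one-step inequality
$$\Phi(z_0,\kappa r)\le N\kappa\,\Phi(z_0,r)+N\kappa^{-(d+2m)}\Big(\omega_{A^{\alpha\beta}}(r)\|D^m{\bf u}\|_{L_\infty(Q_{r}(z_0))}+\omega_{{\bf f}_\alpha}(r)+\hat\omega_1(r)\|D^m{\bf u}\|_{L_\infty(Q_r(z_0))}\Big).$$
Choosing $\kappa$ small and fixed so that $N\kappa\le \tfrac12\kappa^{\gamma}$ (using $\gamma<1$), I would then iterate this over the dyadic scales $r_k=\kappa^k r$ and sum the geometric-type series; the summability of the Dini tails $\sum_k \omega_\bullet(\kappa^k r)$ against $\rho^\gamma/r^\gamma$ is what converts the pointwise bounds $\omega_{A^{\alpha\beta}},\omega_{{\bf f}_\alpha}$ into the Dini functions $\tilde\omega_{A^{\alpha\beta}}(\rho)=\sum_{k:\kappa^k\le\rho}\omega_{A^{\alpha\beta}}(\kappa^k)+\rho^\gamma\sum_{k:\kappa^k>\rho}\kappa^{-k\gamma}\omega_{A^{\alpha\beta}}(\kappa^k)$ and similarly for ${\bf f}_\alpha$, which are Dini whenever $\omega_\bullet$ is (a standard lemma, e.g.\ \cite[Lemma 1]{dk2017}-type). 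Finally, for a general $0<\rho\le r\le1/4$ one picks the appropriate $k$ with $\kappa^{k+1}r<\rho\le\kappa^k r$ and absorbs the resulting constants, and bounds $\Phi(z_0,r)$ at the initial scale trivially by $r^{-(d+2m)}\|(\widetilde{D}^\alpha{\bf u},{\bf U})\|_{L_1(Q_r(z_0))}$ by taking $\Theta=0$.

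The main obstacle I expect is the higher-order algebra in relating the split ${\bf u}={\bf v}+{\bf w}$ to the quantities $(\widetilde{D}^\alpha{\bf u},{\bf U})$ that actually appear in $\Phi$: unlike the second-order case one cannot directly control $D^m{\bf w}$ pointwise from $(\widetilde{D}^\alpha{\bf w},{\bf W})$ without using the ellipticity \eqref{ellipticity} to invert $\bar A^{\bar\alpha\bar\alpha}(x_d)$ and the auxiliary functions ${\bf H},{\bf F}$ as in the proof of Lemma \ref{estDmw}, and the same inversion must be propagated through the definition of ${\bf F}_\alpha$ so that $\|{\bf F}_\alpha\|_{L_1}$ is genuinely controlled by $\omega_{A^{\alpha\beta}}\|D^m{\bf u}\|_{L_\infty}$ rather than by an uncontrolled $\|D^m{\bf u}\|$ on a larger set. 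This is precisely the point flagged in the introduction ("the argument at the end of Step 2 in the proof of Proposition \ref{main prop}"), and it forces the $L_\infty$ norm of $D^m{\bf u}$ — rather than an $L_1$ average — to appear in front of $\tilde\omega_{A^{\alpha\beta}}$ in \eqref{est phi'}. A secondary technical point is ensuring the weak-$(1,1)$ bound \eqref{Dmv} is applied on the correct subcylinder with the correct scaling so that the $\kappa$-powers combine to leave only $\kappa^{-(d+2m)}$ against the Dini terms and a clean $\kappa$ against $\Phi(z_0,r)$; this is routine but must be tracked carefully.
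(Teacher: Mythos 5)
Your overall strategy (Campanato iteration in $L_{1/2}$, freezing $\bar A^{\alpha\beta}_{r,z_0}(x_d)$, Lemma \ref{weak est barv} for the perturbation and Lemma \ref{estDmw} for the frozen homogeneous part, then a dyadic iteration producing $\tilde\omega_\bullet$) is the same as the paper's, but your decomposition ${\bf u}={\bf v}+{\bf w}$ has a genuine gap in the treatment of the data term. You put into ${\bf F}_\alpha$ the quantity ${\bf f}_\alpha-({\bf f}_\alpha)_{Q_{r/2}(z_0)}$, i.e.\ you subtract the plain average, precisely so that ${\bf w}={\bf u}-{\bf v}$ solves the homogeneous frozen system (the constant differentiates away). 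But ${\bf f}_\alpha$ is only \emph{piecewise} DMO: it may jump across the interface $x_d=h_{j_0}(t,x')$ inside $Q_{r/2}(z_0)$, so $\fint_{Q_{r/2}(z_0)}|{\bf f}_\alpha-({\bf f}_\alpha)_{Q_{r/2}(z_0)}|$ is in general of order one (comparable to the jump) and does \emph{not} tend to zero as $r\to0$; only the oscillation against a function that is piecewise constant on the subdomains (or, after \eqref{est A}, piecewise constant in $x_d$ on the strips $\Omega_j$) is controlled by $\omega_{{\bf f}_\alpha}(r)+\hat\omega_1(r)$, cf.\ \eqref{def omega A}. With your ${\bf F}_\alpha$, the one-step inequality carries a non-decaying term of size $\|{\bf f}_\alpha\|_{L_\infty}$ instead of $\bar\omega_{{\bf f}_\alpha}(r)$, and the iteration then yields only $\Phi(z_0,\rho)\lesssim(\rho/r)^{\gamma}r^{-(d+2m)}\|(\widetilde D^\alpha{\bf u},{\bf U})\|_{L_1}+\tilde\omega_{A}(\rho)\|D^m{\bf u}\|_{L_\infty}+\|{\bf f}_\alpha\|_{L_\infty}$, which is strictly weaker than \eqref{est phi'} and useless for the continuity of $(\widetilde D^\alpha{\bf u},{\bf U})$ later on.

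The missing idea is the corrector ${\bf u}_1$ of \eqref{defw u1}: one must subtract from ${\bf f}_\alpha$ the piecewise-constant-in-$x_d$ approximation $\bar{\bf f}_{\alpha;r,z_0}(x_d)$ (this is what gives Dini smallness of ${\bf F}_\alpha$, via \eqref{def omega A} and \eqref{est A}), and then observe that $D^\alpha\bar{\bf f}_{\alpha;r,z_0}(x_d)=0$ for every $|\alpha|=m$ with $\alpha\neq me_d$, while the surviving term $D^{\bar\alpha}\bar{\bf f}_{\bar\alpha;r,z_0}(x_d)$ is absorbed exactly by the explicit function ${\bf u}_1(x_d)$ built from iterated integrals of $(\bar A^{\bar\alpha\bar\alpha}_{r,z_0})^{-1}\bar{\bf f}_{\bar\alpha;r,z_0}$; only with the three-part splitting ${\bf u}={\bf u}_1+{\bf v}+{\bf w}$ does ${\bf w}$ solve the homogeneous frozen system \emph{and} $\|{\bf F}_\alpha\|_{L_1(Q_{r/2}(z_0))}$ get bounded by $(\omega_{A^{\alpha\beta}}(r)+\hat\omega_1(r))\|D^m{\bf u}\|_{L_\infty}r^{d+2m}+(\omega_{{\bf f}_\alpha}(r)+\hat\omega_1(r))r^{d+2m}$, leading to \eqref{holder v bar}. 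The corrector also feeds into the bookkeeping for ${\bf U}$, since $\bar A^{\bar\alpha\bar\alpha}_{r,z_0}D_d^m{\bf u}_1=(-1)^m\bar{\bf f}_{\bar\alpha;r,z_0}$ cancels against the data part of ${\bf U}$. Without this device (or an equivalent one) the lemma as stated, with $\tilde\omega_{{\bf f}_\alpha}(\rho)$ on the right-hand side, is not reached. As a minor side remark, the appearance of $\|D^m{\bf u}\|_{L_\infty}$ in front of $\tilde\omega_{A^{\alpha\beta}}$ is simply due to bounding $\|(A^{\alpha\beta}-\bar A^{\alpha\beta}_{r,z_0})D^\beta{\bf u}\|_{L_1}$, exactly as in the second-order case, and is unrelated to the higher-order algebra you flagged; the rest of your iteration (choice of $\kappa$ with $N_0\kappa\le\kappa^\gamma$, summation of the Dini tails, and the trivial bound for $\Phi(z_0,r)$ with $\Theta=0$) matches the paper.
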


\begin{proof}
We decompose 
\begin{equation}\label{def_w}
{\bf u}={\bf u}_{1}+{\bf v}+{\bf w},
\end{equation}
where 
\begin{equation}\label{defw u1}
{\bf u}_{1}:={\bf u}_{1;r,z_0}(x_d)=(-1)^m\int_{x_{0,d}}^{x_d}\int_{x_{0,d}}^{s_{m-1}}\dots\int_{x_{0,d}}^{s_{1}}(\bar{A}_{r,z_0}^{\bar{\alpha}\bar\alpha}(s))^{-1}\bar{\bf f}_{\bar\alpha;r,z_0}(s)\,ds ds_1\dots ds_{m-1}
\end{equation}
satisfying
\begin{align*}
\partial_t{\bf u}_1+(-1)^{m}\sum_{|\alpha|=|\beta|=m}D^{\alpha}(\bar{A}_{r,z_0}^{\alpha\beta}(x_d)D^{\beta}{\bf u}_1)&=(-1)^{m}D^{\bar\alpha}(\bar{A}_{r,z_0}^{\bar\alpha\bar\alpha}(x_d)D_d^{m}{\bf u}_1)=D^{\bar\alpha}\bar{\bf f}_{\bar{\alpha};r,z_0}(x_d),
\end{align*}
with $\bar\alpha=me_d$. For $p\in(1,\infty)$, let ${\bf v}\in \mathcal{H}_{p}^{m}(Q_{r}(z_{0}))$ be the weak solution of
\begin{align}\label{sol-v}
\begin{cases}
{\bf v}_t+(-1)^{m}\tilde{\mathcal{L}_0\ }{\bf v}=\sum_{|\alpha|=m}D^{\alpha}({\bf F}_\alpha\chi_{Q_{r/2}(z_{0})})&\ \mbox{in}~Q_{r}(z_{0}),\\
{\bf v}=|D{\bf v}|=\dots=|D^{m-1}{\bf v}|=0&\ \mbox{on}~\partial_p Q_{r}(z_{0}),
\end{cases}
\end{align}
where 
\begin{equation}\label{bfF}
{\bf F}_\alpha=\sum_{|\beta|=m}(-1)^m(\bar{A}_{r,z_0}^{\alpha\beta}(x_d)-A^{\alpha\beta}(t,x))D^{\beta}{\bf u}+{\bf f}_\alpha(t,x)-\bar{{\bf f}}_{\alpha;r,z_0}(x_d).
\end{equation}
Then by using  Lemma \ref{weak est barv}, and employing the same argument as in deriving \cite[(3.7)]{dx2019}, we have
\begin{align}\label{holder v bar}
\left(\fint_{Q_{r/2}(z_{0})}|D^m{\bf v}|^{1/2}\ dxdt\right)^{2}\leq N\Big(\bar{\omega}_{A^{\alpha\beta}}(r)\|D^m{\bf u}\|_{L_{\infty}(Q_{r}(z_{0}))}+\bar\omega_{{\bf f}_{\alpha}}(r)\Big),
\end{align}
where $\bar\omega_{\bullet}(r)=\omega_{\bullet}(r)+\hat\omega_1(r)$ and $\hat\omega_1(r)$ is from \eqref{est A}. Combining \eqref{systemmain} and \eqref{bfF}, we have in $Q_{r/2}(z_{0})$, 
\begin{align}\label{eq_w}
&{\bf w}_t+(-1)^{m}\sum_{|\alpha|=|\beta|=m}D^{\alpha}(\bar{A}_{r,z_0}^{\alpha\beta}(x_d)D^{\beta}{\bf w})\nonumber\\
&=\sum_{|\alpha|=m}D^{\alpha}{\bf f}_{\alpha}+(-1)^m\sum_{|\alpha|=|\beta|=m}D^{\alpha}\big((\bar{A}_{r,z_0}^{\alpha\beta}(x_d)-A^{\alpha\beta}(t,x))D^{\beta}{\bf u}\big)-D^{\bar\alpha}\bar{\bf f}_{\bar{\alpha};r,z_0}(x_d)-\sum_{|\alpha|=m}D^{\alpha}{\bf F}_{\alpha}\nonumber\\
&=0.
\end{align}
Moreover, $\widetilde{D}^\alpha{\bf u}=\widetilde{D}^\alpha{\bf w}+\widetilde{D}^\alpha{\bf v}$, and
\begin{align*}
{\bf U}&=\sum_{|\beta|=m}A^{\bar{\alpha}\beta}D^{\beta}{\bf u}-(-1)^m{\bf f}_{\bar{\alpha}}\\
&=\sum_{|\beta|=m}\big(A^{\bar{\alpha}\beta}-A_{r,z_0}^{\bar{\alpha}\beta}(x_d)\big)D^{\beta}{\bf u}+{\bf W}+(-1)^m\bar{\bf f}_{\bar{\alpha};r,z_0}(x_d)+{\bf V}-(-1)^m{\bf f}_{\bar{\alpha}},
\end{align*}
where 
\begin{equation}\label{def_V-W}
{\bf V}=\sum_{|\beta|=m}\bar{A}_{r,z_0}^{\bar{\alpha}\beta}(x_d)D^{\beta}{\bf v},\quad{\bf W}=\sum_{|\beta|=m}\bar{A}_{r,z_0}^{\bar{\alpha}\beta}(x_d)D^{\beta}{\bf w}.
\end{equation}
Therefore, in view of the triangle inequality, \eqref{holder w bar}, and \eqref{holder v bar}, we obtain
\begin{align*}
&\left(\fint_{Q_{\kappa r}(z_{0})}|\widetilde{D}^\alpha{\bf u}-(\widetilde{D}^\alpha{\bf w})_{Q_{\kappa r}(z_{0})}|^{1/2}\ dxdt+\fint_{Q_{\kappa r}(z_{0})}|{\bf U}-({\bf W})_{Q_{\kappa r}(z_{0})}|^{1/2}\ dxdt\right)^{2}\nonumber\\
&\leq N\kappa\left(\fint_{Q_{r/2}(z_{0})}|(\widetilde{D}^\alpha{\bf w},{\bf W})-\Theta|^{1/2}\ dxdt\right)^{2}\nonumber\\
&\quad+N\kappa^{-2(d+2m)}\Big(\bar{\omega}_{A^{\alpha\beta}}(r)\|D^m{\bf u}\|_{L_{\infty}(Q_{r}(z_{0}))}+\bar\omega_{{\bf f}_{\alpha}}(r)\Big)\\
&\leq N_0\kappa\left(\fint_{Q_{r/2}(z_{0})}|(\widetilde{D}^\alpha{\bf u},{\bf U})-\Theta|^{1/2}\ dx\ dt\right)^{2}+N\kappa^{-2(d+2m)}\Big(\bar{\omega}_{A^{\alpha\beta}}(r)\|D^m{\bf u}\|_{L_{\infty}(Q_{r}(z_{0}))}\nonumber\\
&\quad+\bar\omega_{{\bf f}_{\alpha}}(r)\Big),
\end{align*}
where $N_0$ depends on $n,d,p,\nu$, and $\Lambda$. Since $\Theta\in \mathbb R^{n\times \tbinom{d+m-1}{m}}$ is arbitrary, we obtain
\begin{align*}
\Phi(z_{0},\kappa r)\leq N_{0}\kappa\Phi(z_{0},r)+N\kappa^{-2(d+2m)}\Big(\|D^m{\bf u}\|_{L_{\infty}(Q_{r}(z_{0}))}\bar{\omega}_{A^{\alpha\beta}}(r)+\bar\omega_{{\bf f}_\alpha}(r)\Big).
\end{align*}
For any given $\gamma\in(0,1)$, fix a $\kappa\in(0,1/2)$ sufficiently small such that $N_{0}\kappa\leq\kappa^{\gamma}$. This gives 
\begin{align*}
\Phi(z_{0},\kappa r)\leq \kappa^{\gamma}\Phi(z_{0},r)+N\Big(\|D^m{\bf u}\|_{L_{\infty}(Q_{r}(z_{0}))}\bar{\omega}_{A^{\alpha\beta}}(r)+\bar\omega_{{\bf f}_\alpha}(r)\Big).
\end{align*}
By iteration and $\kappa^{\gamma}<1$, we obtain for $j=1,2,\ldots$,
\begin{align*}
\Phi(z_{0},\kappa^{j}r)
&\leq\kappa^{j\gamma}\Phi(z_{0},r)+N\Big(\|D^m{\bf u}\|_{L^{\infty}(Q_{r}(z_{0}))}\sum_{i=1}^{j}\kappa^{(i-1)\gamma}\bar{\omega}_{A^{\alpha\beta}}(\kappa^{j-i}r)\nonumber\\
&\quad+\sum_{i=1}^{j}\kappa^{(i-1)\gamma}\bar\omega_{{\bf f}_\alpha}(\kappa^{j-i}r)\Big),
\end{align*}
which implies 
\begin{align}\label{iteration}
\Phi(z_{0},\kappa^{j}r)\leq\kappa^{j\gamma}\Phi(z_{0},r)+N\|D^m{\bf u}\|_{L_{\infty}(Q_{r}(z_{0}))}\tilde{\omega}_{A^{\alpha\beta}}(\kappa^{j}r)+N\tilde\omega_{{\bf f}_\alpha}(\kappa^{j}r),
\end{align}
where 
\begin{align*}
\tilde\omega_{\bullet}(t)=\sum_{i=1}^{\infty}\kappa^{i\gamma}\Big(\bar\omega_{\bullet}(\kappa^{-i}t)\chi_{\kappa^{-i}t\leq1}+\bar\omega_{\bullet}(1)\chi_{\kappa^{-i}t>1}\Big).
\end{align*}
Therefore, for any $\rho$ with $0<\rho\leq r\leq1/4$ and $\kappa^j r\leq\rho<\kappa^{j-1}r$, we have \eqref{est phi'} and  the lemma is proved.
\end{proof}

Using Lemma \ref{lemma itera}, 
\begin{align*}
D_d^m{\bf u}=\big(A^{\bar{\alpha}\bar\alpha}\big)^{-1}\left({\bf U}-\sum_{|\beta|=m,~\beta\neq me_d}A^{\bar{\alpha}\beta}D^{\beta}{\bf u}+(-1)^m{\bf f}_{\bar{\alpha}}\right),
\end{align*}
and the same argument as in \cite[Lemma 3.4]{dx2019}, under the assumption that $D^m{\bf u}$ is locally bounded, we derive an a priori $L_\infty$-estimate for $D^m{\bf u}$ as follows. 
\begin{lemma}\label{Dmubdd}
There is a constant $N>0$  depending on $n,d,m,p,\nu,\Lambda$, and $\omega_A$, such that
\begin{align}\label{est Dmu''}
\|D^m{\bf u}\|_{L_\infty(Q_{1/4})}\leq N\|(\widetilde{D}^\alpha{\bf u},{\bf U})\|_{L_{1}(Q_{1})}+N\left(\int_0^1\frac{\tilde{\omega}_{{\bf f}_{\alpha}}(s)}{s}\ ds+\|{\bf f}_{\bar{\alpha}}\|_{L_\infty(Q_1)}\right).
\end{align}
\end{lemma}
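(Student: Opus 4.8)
The plan is to bootstrap from the iteration estimate in Lemma \ref{lemma itera} together with the Campanato-type characterization that was already invoked in Lemma \ref{lemmaC12}, following the scheme of \cite[Lemma 3.4]{dx2019}. Concretely, fix $z_0\in Q_{1/4}$ and the coordinate system attached to it. Summing \eqref{est phi'} over the dyadic scales $\rho=\kappa^j r$ with $r=1/4$, and using that $\tilde\omega_{A^{\alpha\beta}}$ and $\tilde\omega_{{\bf f}_\alpha}$ are Dini, one controls $\int_0^{1}\Phi(z_0,\rho)^{1/2}\,\frac{d\rho}{\rho}$ (or the analogous discrete sum) by $\|(\widetilde D^\alpha{\bf u},{\bf U})\|_{L_1(Q_1)}$, plus $\|D^m{\bf u}\|_{L_\infty(Q_r(z_0))}\int_0^1\frac{\tilde\omega_{A^{\alpha\beta}}(s)}{s}\,ds$, plus $\int_0^1\frac{\tilde\omega_{{\bf f}_\alpha}(s)}{s}\,ds$. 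This yields that $(\widetilde D^\alpha{\bf u},{\bf U})$ has a well-defined Lebesgue value at $z_0$ and a pointwise bound
\begin{align*}
|(\widetilde D^\alpha{\bf u}(z_0),{\bf U}(z_0))|\leq N\|(\widetilde D^\alpha{\bf u},{\bf U})\|_{L_1(Q_1)}+N\int_0^1\frac{\tilde\omega_{{\bf f}_\alpha}(s)}{s}\,ds+N\,\|D^m{\bf u}\|_{L_\infty(Q_{1/2})}\int_0^1\frac{\tilde\omega_{A^{\alpha\beta}}(s)}{s}\,ds.
\end{align*}

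Next I would recover $D_d^m{\bf u}$ from the algebraic identity
\begin{align*}
D_d^m{\bf u}=\big(A^{\bar\alpha\bar\alpha}\big)^{-1}\Big({\bf U}-\sum_{|\beta|=m,\,\beta\neq me_d}A^{\bar\alpha\beta}D^\beta{\bf u}+(-1)^m{\bf f}_{\bar\alpha}\Big),
\end{align*}
which is legitimate since $A^{\bar\alpha\bar\alpha}$ is invertible by the ellipticity \eqref{ellipticity}. Pointwise in $z_0$ this bounds $|D^m{\bf u}(z_0)|$ by $|({\widetilde D}^\alpha{\bf u}(z_0),{\bf U}(z_0))|+\|{\bf f}_{\bar\alpha}\|_{L_\infty(Q_1)}$, using boundedness of the $A^{\alpha\beta}$. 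Combining with the previous display and taking the supremum over $z_0\in Q_{1/4}$ gives
\begin{align*}
\|D^m{\bf u}\|_{L_\infty(Q_{1/4})}\leq N\|(\widetilde D^\alpha{\bf u},{\bf U})\|_{L_1(Q_1)}+N\int_0^1\frac{\tilde\omega_{{\bf f}_\alpha}(s)}{s}\,ds+N\|{\bf f}_{\bar\alpha}\|_{L_\infty(Q_1)}+N\,\|D^m{\bf u}\|_{L_\infty(Q_{1/2})}\int_0^1\frac{\tilde\omega_{A^{\alpha\beta}}(s)}{s}\,ds.
\end{align*}
The term with $\|D^m{\bf u}\|_{L_\infty(Q_{1/2})}$ on the right is not yet absorbable because the radius is larger than $1/4$; the standard fix is to run the whole argument on a family of balls and use a scaling/covering plus the iteration lemma \cite[Chapter V, Lemma 3.1]{g1983} (or the usual ``filling the hole'' argument with $\varepsilon$-shrinking cylinders) so that the coefficient in front of $\|D^m{\bf u}\|_{L_\infty}$ can be made as small as one wishes by first choosing a scale on which $\int_0^{s_0}\frac{\tilde\omega_{A^{\alpha\beta}}(s)}{s}\,ds$ is small. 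This absorption step is the main obstacle: one must be careful that the $L_1$-norm of $(\widetilde D^\alpha{\bf u},{\bf U})$ on the right stays on a fixed cylinder (it does, since the perturbation terms only ever enlarge to $Q_1$) while the self-improving term is genuinely local, so the interpolation/absorption closes.

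Finally, a small technical point worth spelling out: Lemma \ref{lemma itera} already presupposes that $D^m{\bf u}$ is locally bounded (it appears on the right of \eqref{est phi'}), and so does the present statement (``under the assumption that $D^m{\bf u}$ is locally bounded''), so there is no circularity — the lemma is an \emph{a priori} estimate. For the qualitative local boundedness one invokes the $\mathcal H^m_p$-solvability and Sobolev embedding as in Lemma \ref{lemma xn}; but since it is assumed here, I would simply note this and proceed. After rescaling back from $Q_{1/4}$, $Q_1$ to the general $\varepsilon$-interior cylinders via the fixed coordinate system associated with each $z_0$, one obtains \eqref{est Dmu''}, with $N$ additionally depending on $\omega_A$ through the smallness radius chosen in the absorption step.
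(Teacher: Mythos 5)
Your proposal is correct and follows essentially the same route as the paper, which proves this lemma by combining the iteration estimate of Lemma \ref{lemma itera}, the algebraic identity recovering $D_d^m{\bf u}$ from ${\bf U}$ and $\widetilde D^\alpha{\bf u}$, and the absorption argument of \cite[Lemma 3.4]{dx2019} (choosing a scale where the Dini integral of $\tilde\omega_{A^{\alpha\beta}}$ is small and iterating on shrinking cylinders via \cite[Chapter V, Lemma 3.1]{g1983}). Your remark that this is an a priori estimate under the temporary local boundedness of $D^m{\bf u}$, removed later in Step 2 of Proposition \ref{main prop}, also matches the paper.
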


For any point $z_1$ in the same subdomain as $z_0$, we denote 
$$
\Phi_{z_0}(z_{1},r)=\inf_{\Theta\in\mathbb R^{n\times \tbinom{d+m-1}{m}}}\left(\fint_{Q_{r}(z_{1})}|(\widetilde{D}^\alpha{\bf u},{\bf U})-\Theta|^{\frac{1}{2}}\ dxdt\right)^{2},$$
where $\Phi_{z_0}(z_{1},r)$ means that the function is defined in the coordinate system associated with $z_0$.

\begin{lemma}\label{lemma itera2}
Let $z_0$ and $z_1$ be two points in the same subdomain. Then for any $\gamma\in (0,1)$ and $0<\rho\leq r\leq 1/4$, we have
\begin{align}\label{est phi'2}
\Phi_{z_0}(z_{1},\rho)&\leq N\Big(\frac{\rho}{r}\Big)^{\gamma}r^{-(d+2m)}\|(\widetilde{D}^\alpha{\bf u},{\bf U})\|_{L_{1}(Q_{r}(z_{1}))}
+N\tilde{\omega}_{A^{\alpha\beta}}(\rho)\|D^m{\bf u}\|_{L_{\infty}(Q_{r}(z_{1}))}+N\tilde{\omega}_{{\bf f}_{\alpha}}(\rho),
\end{align}
where $Q_{r}(z_{1})\subset\cQ_{j}$ for some $j=1,\dots,l+1$, $N=N(n,d,m,p,\nu,\Lambda,\gamma)$, and $\tilde\omega_{\bullet}(t)$ is a Dini function derived from $\omega_{\bullet}(t)$.
\end{lemma}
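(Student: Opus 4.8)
The plan is to reduce Lemma~\ref{lemma itera2} to Lemma~\ref{lemma itera} by observing that the latter was proved for an \emph{arbitrary} center $z_0\in Q_{3/4}$ together with the coordinate system adapted to it, and that its proof only used the following local data: the equation \eqref{systemmain} near the center, the geometry of the subdomains through the ``strips'' $\Omega_j$, the piecewise DMO modulus $\omega_{A^{\alpha\beta}}$, and the modulus $\omega_{{\bf f}_\alpha}$. The point is that $z_1$ lies in the same subdomain $\cQ_{j_0}$ as $z_0$, and we want to estimate $\Phi_{z_0}(z_1,\rho)$ using the coordinate system attached to $z_0$ rather than one attached to $z_1$. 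The one extra issue compared to Lemma~\ref{lemma itera} is that in the coordinate system of $z_0$, the interfacial boundaries near $z_1$ need not be ``flat to first order'' at $z_1$, i.e.\ the analogue of the normalization $\nabla_{x'}h_{j_0}(t_0,x_0')=0'$ may fail at $z_1$. First I would record that this only costs an extra $C^{1,\mathrm{Dini}}$ error term: since the $h_j$ are $C^{1,\mathrm{Dini}}$ in $x'$ and $C^{\gamma_0}$ in $t$ with $\gamma_0>\tfrac{1}{2m}$, the defect $|\nabla_{x'}h_{j_0}(t_1,x_1')-\nabla_{x'}h_{j_0}(t_0,x_0')|$ is controlled by $\omega_{h}(|z_0-z_1|_p)+|z_0-z_1|_p^{2m\gamma_0-1}$, which is again a Dini modulus of the same type as $\hat\omega_1$ in \eqref{est A}.

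Second, I would run the same three-term decomposition ${\bf u}={\bf u}_1+{\bf v}+{\bf w}$ as in the proof of Lemma~\ref{lemma itera}, but now centered at $z_1$ and with the piecewise-constant approximating coefficients $\bar A^{\alpha\beta}_{r,z_1}$ built from the strips $\Omega_j$ associated with $z_1$ \emph{inside} the coordinate system of $z_0$. The construction of $\bar A^{\alpha\beta}_{r,z_1}$ (constant on each strip through $z_1$) and the estimate \eqref{est A}, now in the form $\fint_{Q_r(z_1)}|\hat A^{\alpha\beta}-\bar A^{\alpha\beta}_{r,z_1}|\,dxdt\le N\hat\omega_1(r)$, go through verbatim using \cite[Lemma~3.3]{dx2021}, because that lemma only needs the $C^{1,\mathrm{Dini}}$-in-$x$ and $C^{\gamma_0}$-in-$t$ structure of the interfaces, not the first-order flatness at the center. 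With $\bar A^{\alpha\beta}_{r,z_1}$ in hand, $\bf v$ is defined as in \eqref{sol-v}, ${\bf w}={\bf u}-{\bf u}_1-{\bf v}$ solves the frozen-coefficient system \eqref{eq_w} in $Q_{r/2}(z_1)$, the weak-type estimate Lemma~\ref{weak est barv} yields the analogue of \eqref{holder v bar}, and the mean-oscillation decay Lemma~\ref{estDmw} applied to ${\bf w}$ on $Q_{r/2}(z_1)$ gives the analogue of \eqref{holder w bar}. Combining via the triangle inequality exactly as before produces
\[
\Phi_{z_0}(z_1,\kappa r)\le N_0\kappa\,\Phi_{z_0}(z_1,r)+N\kappa^{-2(d+2m)}\Big(\|D^m{\bf u}\|_{L_\infty(Q_r(z_1))}\bar\omega_{A^{\alpha\beta}}(r)+\bar\omega_{{\bf f}_\alpha}(r)\Big),
\]
and the same iteration over $\kappa^j r$ that gave \eqref{iteration}--\eqref{est phi'} now gives \eqref{est phi'2}.

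The main obstacle, as indicated above, is bookkeeping the coordinate mismatch: one must check that replacing the ``$z_1$-adapted'' coordinates by the ``$z_0$-adapted'' ones does not degrade any of the frozen-coefficient estimates. The key observations that make this harmless are (i) the ellipticity \eqref{ellipticity} and the auxiliary Lemmas~\ref{lemma xn}, \ref{weak est barv}, \ref{estDmw} are invariant under orthogonal change of spatial coordinates, so it is legitimate to apply them to ${\bf w}$ in the $z_0$-coordinates; (ii) the only place where the center-specific normalization $\nabla_{x'}h=0'$ entered Lemma~\ref{lemma itera} was in absorbing the tilt of the interface into the strip approximation, and as noted that tilt is $O(\hat\omega_1(r))$ in the $z_0$-frame, hence already of the order of the error terms we are allowed; (iii) since $z_0,z_1$ lie in the \emph{same} subdomain $\cQ_{j_0}$, the strips $\Omega_j$ associated to $z_1$ in the $z_0$-frame still separate the subdomains correctly on $Q_{r}(z_1)$ provided $r$ is small, so the definition of $\bar A^{\alpha\beta}_{r,z_1}$ makes sense. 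With these three points established, no new estimates are needed and the conclusion \eqref{est phi'2} follows by the verbatim argument of Lemma~\ref{lemma itera}.
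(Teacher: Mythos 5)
Your overall plan (rerun the proof of Lemma~\ref{lemma itera} centered at $z_1$ but in the coordinate frame attached to $z_0$) could be made to work, but the way you handle the coordinate mismatch has a genuine quantitative gap, and it stems from never using the hypothesis $Q_r(z_1)\subset\cQ_j$ that is part of the statement. You allow interfaces to cross $Q_r(z_1)$ and argue that the tilt $|\nabla_{x'}h_{j_0}(t_1,x_1')|$ in the $z_0$-frame is of size $\hat\omega_1(|z_0-z_1|_p)$ and hence ``of the order of the error terms we are allowed''. This is not so: if an interface with a fixed nonzero tilt $\theta$ relative to the $z_0$-frame passes near $z_1$, then for \emph{every} radius $\rho\le r$ the best approximation of $A^{\alpha\beta}$ on $Q_\rho(z_1)$ by functions of the $z_0$-frame variable $x_d$ alone has $L_1$-mean error bounded below by $c\,\theta$, because the mismatch wedge occupies a fixed fraction $\sim\theta$ of $Q_\rho(z_1)$, independently of $\rho$. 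In other words, the analogue of \eqref{est A} fails without the normalization $\nabla_{x'}h_{j_0}(t_0,x_0')=0'$ at the center of the frame; \cite[Lemma 3.3]{dx2021} does use that flatness, contrary to your claim. Running your iteration with this non-decaying error only yields a bound of the form $\Phi_{z_0}(z_1,\rho)\le N(\rho/r)^\gamma(\cdots)+N\tilde\omega_{A^{\alpha\beta}}(\rho)\|D^m{\bf u}\|_{L_\infty}+N\tilde\omega_{{\bf f}_\alpha}(\rho)+N\theta\,\|D^m{\bf u}\|_{L_\infty}$, whose last term does not vanish as $\rho\to0$. That is strictly weaker than \eqref{est phi'2} and, crucially, cannot be summed over the dyadic scales $\kappa^l r$ in Case 1 of Step 1 of the proof of Proposition~\ref{main prop}, which is exactly where the lemma is used. (Your ``provided $r$ is small'' in point (iii) is also not available: $r$ is given.)

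The repair is to invoke the stated hypothesis $Q_r(z_1)\subset\cQ_j$: then no interface meets $Q_\rho(z_1)$ for $\rho\le r$, the piecewise-constant approximations of $A^{\alpha\beta}$ and ${\bf f}_\alpha$ there reduce to genuine constants with errors $\omega_{A^{\alpha\beta}}(\rho)$ and $\omega_{{\bf f}_\alpha}(\rho)$, and your decomposition and iteration in the $z_0$-frame then run verbatim (constant coefficients are a special case of $\bar A(x_d)$ in Lemmas~\ref{weak est barv} and~\ref{estDmw}); the entire strip/tilt discussion becomes unnecessary. Note also that the paper's own proof avoids re-deriving the iteration: inside $Q_r(z_1)\subset\cQ_j$ it freezes $A^{\bar\alpha\beta}$ and ${\bf f}_{\bar\alpha}$ at constants, compares $\Phi_{z_0}(z_1,r)$ with the coordinate-free oscillation $\Psi(z_1,r)$ of $D^m{\bf u}$ (paying only $N\omega_{A^{\alpha\beta}}(r)\|D^m{\bf u}\|_{L_\infty}+N\omega_{{\bf f}_{\bar\alpha}}(r)$ in each direction, via the matrix $\mathcal{M}$), then compares $\Psi(z_1,r)$ with $\Phi(z_1,r)$ in the $z_1$-adapted frame, and concludes by citing Lemma~\ref{lemma itera} with $z_1$ in place of $z_0$.
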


\begin{proof}
Let $\hat{A}^{\bar{\alpha}\beta}$ and $\hat{{\bf f}}_{\bar{\alpha}}$ be constants corresponding to $A^{\bar{\alpha}\beta}$ and ${\bf f}_{\bar{\alpha}}$, respectively, where $\bar{\alpha}=me_d$. 
For any $\Theta=(\widetilde\Theta_{\alpha},\Theta_{\bar{\alpha}})$, $|\alpha|=m$, and $\alpha\neq me_d$, using $Q_{r}(z_{1})\subset\cQ_{j}$ and the triangle inequality, we obtain 
\begin{align}\label{estPhiz1}
&\Phi_{z_{0}}(z_1,r)\leq\left(\fint_{Q_{r}(z_1)}\Big|\big(\widetilde{D}^\alpha{\bf u}-\widetilde\Theta_\alpha ,{\bf U}+(-1)^m\hat{{\bf f}}_{\bar{\alpha}}-\sum_{|\beta|=m}\hat{A}^{\bar{\alpha}\beta}\Theta_{\beta}\big)\Big|^{1/2}\right)^{2}\nonumber\nonumber\\
&=\Bigg(\fint_{Q_{r}(z_1)}|(\widetilde{D}^\alpha{\bf u}-\widetilde\Theta_\alpha,\sum_{|\beta|=m}\hat{A}^{\bar{\alpha}\beta}D^{\beta}{\bf u}-\sum_{|\beta|=m}\hat{A}^{\bar{\alpha}\beta}\Theta_{\beta})\nonumber\\
&\quad+(0',{\bf U}-\sum_{|\beta|=m}\hat{A}^{\bar{\alpha}\beta}D^{\beta}{\bf u}+(-1)^m\hat{{\bf f}}_{\bar{\alpha}})|^{1/2}\Bigg)^{2}\nonumber\\
&=\left(\fint_{Q_{r}(z_1)}|(\widetilde{D}^\alpha{\bf u}-\widetilde\Theta_\alpha,D_{d}^m{\bf u}-\Theta_{\bar\alpha})\mathcal{M}+(0',{\bf U}-\sum_{|\beta|=m}\hat{A}^{\bar{\alpha}\beta}D^{\beta}{\bf u}+(-1)^m\hat{{\bf f}}_{\bar{\alpha}})|^{1/2}\right)^{2}\nonumber\\
&\leq N\Psi(z_1,r)+N\left({\omega}_{A^{\alpha\beta}}(r)\|D^m{\bf u}\|_{L_{\infty}(Q_{r}(z_{1}))}
+\omega_{{\bf f}_{\bar{\alpha}}}(r)\right),
\end{align}
where $\widetilde{D}^\alpha{\bf u}$ is the collection of $D^\alpha{\bf u}$, $|\alpha|=m$, $\alpha\neq me_d$, $\mathcal{M}$ is defined by \eqref{DEF_M} with $\hat{A}^{\bar{\alpha}\beta_i}$ in place of $\bar{A}^{\bar{\alpha}\beta_i}(x_d)$, and
$$\Psi(z_1,r):=\inf_{\Theta\in\mathbb R^{n\times \tbinom{d+m-1}{m}}}\left(\fint_{Q_{r}(z_1)}|D^m{\bf u}-\Theta |^{1/2}\ dxdt\right)^{2},$$
which is independent of coordinate systems. 

Next we estimate $\Psi(z_1,r)$ in the coordinate system associated with $z_1$. Note that
\begin{align*}
&(\widetilde{D}^\alpha{\bf u}-\widetilde\Theta_\alpha ,\sum_{|\beta|=m}\hat{A}^{\bar{\alpha}\beta}D^{\beta}{\bf u}-(-1)^m\hat{{\bf f}}_{\bar{\alpha}}-\Theta_{\bar{\alpha}})\nonumber\\
&=\Big(\widetilde{D}^\alpha{\bf u}-\widetilde\Theta_\alpha,D_{d}^m{\bf u}-(\hat{A}^{\bar{\alpha}\bar\alpha})^{-1}\big((-1)^m\hat{{\bf f}}_{\bar{\alpha}}
+\Theta_{\bar{\alpha}}-\sum_{\substack{|\beta|=m\\
\beta\neq me_d}}\hat{A}^{\bar{\alpha}\beta}\Theta_{\beta}\big)\Big)\mathcal{M}.
\end{align*}
Here, $(\hat{A}^{\bar{\alpha}\bar\alpha})^{-1}\big((-1)^m\hat{{\bf f}}_{\bar{\alpha}}
	+\Theta_{\bar{\alpha}}-\sum_{\substack{|\beta|=m\\
			\beta\neq me_d}}\hat{A}^{\bar{\alpha}\beta}\Theta_{\beta}\big)$ 
is a (vector-valued) constant in $Q_{r}(z_1)\subset\cQ_j$ for some $j=1,\dots,l+1$. Then by the triangle inequality, we have 
\begin{align*}
&\Psi(z_1,r)\leq \left(\fint_{Q_{r}(z_1)}\Big|\big(\widetilde{D}^\alpha{\bf u}-\widetilde\Theta_\alpha,D_{d}^m{\bf u}-(\hat{A}^{\bar{\alpha}\bar\alpha})^{-1}\big((-1)^m\hat{{\bf f}}_{\bar{\alpha}}
+\Theta_{\bar{\alpha}}-\sum_{\substack{|\beta|=m\\
\beta\neq me_d}}\hat{A}^{\bar{\alpha}\beta}\Theta_{\beta}\big)\big)\Big|^{1/2}\right)^{2}\nonumber\\
&=\left(\fint_{Q_{r}(z_1)}|(\widetilde{D}^\alpha{\bf u}-\widetilde\Theta_\alpha ,\sum_{|\beta|=m}\hat{A}^{\bar{\alpha}\beta}D^{\beta}{\bf u}-(-1)^m\hat{{\bf f}}_{\bar{\alpha}}-\Theta_{\bar{\alpha}})\mathcal{M}^{-1}|^{1/2}\right)^{2}\nonumber\\
&=\Bigg(\fint_{Q_{r}(z_1)}\Big|\big((\widetilde{D}^\alpha{\bf u}-\widetilde\Theta_\alpha,{\bf U}-\Theta_{\bar{\alpha}})\nonumber\\
&\qquad+(0',\sum_{|\beta|=m}\hat{A}^{\bar{\alpha}\beta}D^{\beta}{\bf u}-\sum_{|\beta|=m}A^{\bar{\alpha}\beta}D^{\beta}{\bf u}+(-1)^m{\bf f}_{\bar{\alpha}}-(-1)^m\hat{{\bf f}}_{\bar{\alpha}})\big)\mathcal{M}^{-1}\Big|^{1/2}\Bigg)^{2}\nonumber\\
&\leq N\left(\fint_{Q_{r}(z_1)}|(\widetilde{D}^\alpha{\bf u},{\bf U})-\Theta|^{1/2}\right)^{2}+N\left({\omega}_{A^{\alpha\beta}}(r)\|D^m{\bf u}\|_{L_{\infty}(Q_{r}(z_{1}))}
+\omega_{{\bf f}_{\bar{\alpha}}}(r)\right).
\end{align*}
Since $\Theta$ is arbitrary, we obtain
\begin{align*}
\Psi(z_1,r)\leq N\Phi(z_{1},r)+N\left({\omega}_{A^{\alpha\beta}}(r)\|D^m{\bf u}\|_{L_{\infty}(Q_{r}(z_{1}))}
+\omega_{{\bf f}_{\bar{\alpha}}}(r)\right),
\end{align*}
where $\Phi(z_{1},r)$ is the function is defined in the coordinate system associated with $z_1$. Substituting it into \eqref{estPhiz1}, we derive 
\begin{equation*}
\Phi_{z_{0}}(z_1,r)\leq N\Phi(z_{1},r)+N\left({\omega}_{A^{\alpha\beta}}(r)\|D^m{\bf u}\|_{L_{\infty}(Q_{r}(z_{1}))}
+\omega_{{\bf f}_{\bar{\alpha}}}(r)\right).
\end{equation*}
By using \eqref{est phi'} with $z_1$ in place of $z_0$, we get \eqref{est phi'2}.
\end{proof}

With Lemmas \ref{lemma itera} and \ref{Dmubdd} in hand, we are ready to prove the following proposition. Together with Lemma \ref{lemmaC12} and the results in Appendix \ref{Append}, Theorem \ref{mainthm} follows.

\begin{prop}\label{main prop}
Let $\varepsilon, \gamma\in(0,1)$ and $p\in(1,\infty)$. Suppose that $A^{\alpha\beta}$ and ${\bf f}_\alpha$ are of piecewise Dini mean oscillation in $Q_{1}$, and ${\bf f}_{\alpha}\in L_\infty(Q_1)$. If ${\bf u}\in \mathcal{H}_{p}^{m}(Q_{1})$ is a weak solution to
$${\bf u}_{t}+(-1)^{m}\sum_{|\alpha|=|\beta|= m}D^{\alpha}(A^{\alpha\beta}D^{\beta}{\bf u})=\sum_{|\alpha|= m}D^{\alpha}{\bf f}_{\alpha}\quad\mbox{in}~ Q_{1},$$
then ${\bf u}\in C^{1/2,m}(\overline{{\cQ}_{j}}\cap ((-1+\varepsilon,0)\times B_{1-\varepsilon}))$, $j=1,\ldots,M$, and 
\begin{equation*}
\|D^m{\bf u}\|_{L_\infty((-1+\varepsilon,0)\times B_{1-\varepsilon})}\leq N\|(\widetilde{D}^\alpha{\bf u},{\bf U})\|_{L_{1}(Q_{1})}+N\left(\int_0^1\frac{\tilde{\omega}_{{\bf f}_{\alpha}}(s)}{s}\ ds+\|{\bf f}_{\bar{\alpha}}\|_{L_\infty(Q_1)}\right).
\end{equation*}
Furthermore, for any fixed $z_{0}\in (-1+\varepsilon,0)\times B_{1-\varepsilon}$, there exists a coordinate system associated with $z_{0}$, such that for all $z_{1}\in (-1+\varepsilon,0)\times B_{1-\varepsilon}$, we have
\begin{align*}
&|(\widetilde{D}^\alpha{\bf u}(z_0),{\bf U}(z_0))-(\widetilde{D}^\alpha{\bf u}(z_1),{\bf U}(z_1))|\\
&\leq N|z_{0}-z_{1}|_{p}^{\gamma}\|(\widetilde{D}^\alpha{\bf u},{\bf U})\|_{L_{1}(Q_{1})}+N\int_{0}^{|z_{0}-z_{1}|_{p}}\frac{\tilde{\omega}_{{\bf f}_\alpha}(s)}{s}\ ds\\
&\quad+N\int_{0}^{|z_{0}-z_{1}|_{p}}\frac{\tilde{\omega}_{A^{\alpha\beta}}(s)}{s}\ ds\cdot\left(\|(\widetilde{D}^\alpha{\bf u},{\bf U})\|_{L_{1}(Q_{1})}+\int_{0}^{1}\frac{\tilde{\omega}_{{\bf f}_\alpha}(s)}{s}\ ds+\|{\bf f}_\alpha\|_{L_{\infty}(Q_{1})}\right),
\end{align*}
where $\widetilde{D}^\alpha{\bf u}$ is the collection of $D^\alpha{\bf u}$, $\alpha=m,~\alpha\neq me_d$, ${\bf U}$ is defined in \eqref{defU} with $|\beta|=m$, $N$ depends on $n,d,m,M,p,\Lambda,\nu,\varepsilon,\gamma$, and the $C^{1,\text{Dini}}$ and $C^{\gamma_{0}}$ characteristics of $\cQ_{j}$ with respect to $x$ and $t$, respectively, and $\tilde\omega_{\bullet}(t)$ is a Dini function derived from $\omega_{\bullet}(t)$. 
\end{prop}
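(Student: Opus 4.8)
The plan is to prove the three assertions in turn: the interior $L_\infty$ bound for $D^m{\bf u}$, the $C^{1/2,m}$ regularity on each closed subdomain, and the pointwise modulus of continuity of the pair $(\widetilde{D}^\alpha{\bf u},{\bf U})$. Since Lemma~\ref{Dmubdd} is only an \emph{a priori} bound — it presumes $D^m{\bf u}$ locally bounded — I would first run a standard approximation: mollify the leading coefficients $A^{\alpha\beta}$, the data ${\bf f}_\alpha$, and the interface graphs $h_j$ so that the regularized problem has smooth coefficients and smooth interfaces (hence locally bounded $D^m{\bf u}^{(k)}$), while keeping $\nu$, $\Lambda$, the Dini moduli $\omega_{A^{\alpha\beta}}$, $\omega_{{\bf f}_\alpha}$, and the $C^{1,\text{Dini}}$/$C^{\gamma_0}$ characteristics controlled uniformly in $k$; the one point needing care is that the regularization respect the piecewise structure. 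Lemma~\ref{Dmubdd} then applies on small cylinders; rescaling and covering $(-1+\varepsilon,0)\times B_{1-\varepsilon}$ by finitely many of them yields the asserted $L_\infty$ bound with $N$ independent of $k$, and passing to the limit (uniqueness of $\mathcal{H}_p^m$-solutions, cf.\ Lemma~\ref{solvability}) gives it for ${\bf u}$. The norms $\|D^k{\bf u}\|_{L_\infty}$, $0\le k\le m-1$, then follow by interpolating between $\|D^m{\bf u}\|_{L_\infty}$ and $\|{\bf u}\|_{L_p}$, and Lemma~\ref{lemmaC12} turns these into the time-regularity part of the $C^{1/2,m}$ statement.

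For the pointwise estimate, fix $z_0$ and work in the coordinate system attached to it, fix $r_0=1/8$ and the $\kappa\in(0,1/2)$ furnished by Lemma~\ref{lemma itera} for the given $\gamma$. Let $\Theta_j$ be an approximate minimizer in $\Phi(z_0,\kappa^j r_0)$; comparing the minimizers on $Q_{\kappa^{j+1}r_0}(z_0)\subset Q_{\kappa^j r_0}(z_0)$ gives $|\Theta_{j+1}-\Theta_j|\le N\Phi(z_0,\kappa^j r_0)$, and Lemma~\ref{lemma itera} (used at the scales $\kappa^j r_0$) bounds $\sum_j\Phi(z_0,\kappa^j r_0)$ by $N\|(\widetilde{D}^\alpha{\bf u},{\bf U})\|_{L_1(Q_1)}+N\|D^m{\bf u}\|_{L_\infty}\sum_j\tilde\omega_{A^{\alpha\beta}}(\kappa^j r_0)+N\sum_j\tilde\omega_{{\bf f}_\alpha}(\kappa^j r_0)$, whose last two sums are comparable to $\int_0^{r_0}\tilde\omega_{A^{\alpha\beta}}(s)/s\,ds$ and $\int_0^{r_0}\tilde\omega_{{\bf f}_\alpha}(s)/s\,ds$, finite by the Dini condition. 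Hence $\{\Theta_j\}$ converges; its limit is the Lebesgue value $(\widetilde{D}^\alpha{\bf u}(z_0),{\bf U}(z_0))$, and summing the tail yields, for every $\rho\le r_0$,
\begin{equation*}
|(\widetilde{D}^\alpha{\bf u}(z_0),{\bf U}(z_0))-\Theta_{z_0,\rho}|\le N\rho^\gamma\|(\widetilde{D}^\alpha{\bf u},{\bf U})\|_{L_1(Q_1)}+N\int_0^\rho\frac{\tilde\omega_{{\bf f}_\alpha}(s)}{s}\,ds+N\|D^m{\bf u}\|_{L_\infty}\int_0^\rho\frac{\tilde\omega_{A^{\alpha\beta}}(s)}{s}\,ds,
\end{equation*}
after which I substitute the bound of Lemma~\ref{Dmubdd} for $\|D^m{\bf u}\|_{L_\infty}$. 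This is also the place where the higher-order case departs from $m=1$: to promote this into continuity of the \emph{full} $D^m{\bf u}$ up to $\overline{\cQ}_{j_0}$ — needed for the $C^{1/2,m}$ claim and to be sure the limit is correctly identified — one inverts $D_d^m{\bf u}=(A^{\bar\alpha\bar\alpha})^{-1}\big({\bf U}-\sum_{|\beta|=m,\,\beta\ne me_d}A^{\bar\alpha\beta}D^\beta{\bf u}+(-1)^m{\bf f}_{\bar\alpha}\big)$, using that piecewise DMO coefficients and data are continuous inside each $\cQ_j$; now the whole family of mixed $m$-th order derivatives $D^\beta{\bf u}$ (each a component of $\widetilde{D}^\alpha{\bf u}$) must be fed back through this identity, and tracking them together with the error from flattening the interface is the delicate point.

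For two points, set $r:=|z_0-z_1|_p$. If $r\ge c_0$ (a fixed constant), the inequality is immediate from the $L_\infty$ bound since $|z_0-z_1|_p^\gamma\ge c_0^\gamma$. If $r$ is small, $z_0,z_1$ lie in the same subdomain $\cQ_{j_0}$, and $Q_{c_1r}(z_1)\subset\cQ_{j_0}$, I would repeat the telescoping of the previous paragraph but with Lemma~\ref{lemma itera2} in place of Lemma~\ref{lemma itera} — this gives the decay of $\Phi_{z_0}(z_1,\cdot)$, measured in the coordinate system of $z_0$ — to obtain $|(\widetilde{D}^\alpha{\bf u}(z_1),{\bf U}(z_1))-\Theta^{z_0}_{z_1,r}|$ bounded by the right-hand side of the proposition at scale $r$; then compare $\Theta_{z_0,Cr}$ and $\Theta^{z_0}_{z_1,r}$, both constants in the $z_0$-coordinates, on the overlapping cylinders $Q_r(z_1)\subset Q_{Cr}(z_0)$, and combine with the estimate at $z_0$ at scale $Cr$. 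In the remaining case $z_1$ (hence $z_0$) lies within $O(r)$ of an interface: here I would use that $(\widetilde{D}^\alpha{\bf u},{\bf U})$ is exactly the transmission-compatible pair — in the $z_0$-coordinates the interface $\{x_d=h_{j_0}(t,x')\}$ has spatial normal $\approx e_d$ near $z_0$, so for ${\bf u}\in\mathcal{H}_p^m$ the tangential $m$-th order derivatives $\widetilde{D}^\alpha{\bf u}$ and the $x_d$-conormal flux ${\bf U}$ match across it up to the geometric defect $O(r^\gamma)+O(r^{2m\gamma_0-1})$ of the flattening, already absorbed into $\tilde\omega_{A^{\alpha\beta}}$ by \eqref{est A} — and then reduce to the one-sided moduli inside $\cQ_{j_0}$ and the adjacent subdomain, both already in hand. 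Together with the reduction of the lower-order terms in Appendix~\ref{Append}, this completes the proof.

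I expect the main obstacle to be the bookkeeping in the two-point estimate — reconciling the coordinate system attached to $z_0$ with the intrinsic quantity $\Psi$ and with near-minimizers centered at $z_1$ (precisely what Lemma~\ref{lemma itera2} is designed for), together with the interface-crossing case — compounded by the higher-order recovery of $D^m{\bf u}$ flagged at the end of the second step: for $m\ge2$ the identity producing $D_d^m{\bf u}$ pulls in the entire family of mixed $m$-th derivatives, and flattening a $C^{1,\text{Dini}}$-in-$x$, $C^{\gamma_0}$-in-$t$ interface generates order-$m$ error terms with no second-order counterpart; keeping these under control — this is where the hypothesis $\gamma_0>\frac{1}{2m}$ is used — is the crux.
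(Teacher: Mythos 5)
Your outline of the pointwise estimates reproduces the paper's Step 1 faithfully: telescoping $\Phi(z_0,\kappa^j r)$ via Lemma \ref{lemma itera}, identifying the limit of the near-minimizers with $(\widetilde{D}^\alpha{\bf u}(z_0),{\bf U}(z_0))$, replacing $\|D^m{\bf u}\|_{L_\infty}$ by Lemma \ref{Dmubdd}, and splitting the two-point estimate into the trivial large-distance case, the same-subdomain case via Lemma \ref{lemma itera2} and comparison of near-minimizers on overlapping cylinders, and a near-interface case (which the paper delegates to \cite[pp.~21--22]{dx2021}). The genuine gap is in how you remove the a priori assumption that $D^m{\bf u}$ is locally bounded. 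You write that after regularization the problem has ``smooth coefficients and smooth interfaces (hence locally bounded $D^m{\bf u}^{(k)}$)'' while ``keeping \dots the Dini moduli \dots controlled uniformly,'' and you only remark that the regularization should ``respect the piecewise structure.'' These two requirements pull in opposite directions, and neither resolution is automatic. If you mollify globally, the coefficients become smooth and classical theory gives bounded $D^m{\bf u}^{(k)}$, but the \emph{piecewise} Dini modulus is not uniformly controlled: smearing a jump of size $O(1)$ over a layer of width $\varepsilon$ produces $\omega_{A_\varepsilon}(r)\sim\min(1,r/\varepsilon)$ inside each subdomain, so $\int_0^1\omega_{A_\varepsilon}(s)s^{-1}\,ds\sim\log(1/\varepsilon)$ and the constant in Lemma \ref{Dmubdd} blows up. If instead you mollify piecewise (as the paper does, so that $\omega_{A^{\alpha\beta}_\varepsilon}\le\omega_{A^{\alpha\beta}}$), the regularized coefficients still jump across the (mollified) interface, and the local boundedness of $D^m{\bf u}_\varepsilon$ is precisely the nontrivial point — there is no ``hence.''

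This is exactly what the paper's Step 2 supplies and what your proposal only flags as ``the crux'' without providing a mechanism: one corrects ${\bf u}$ by a solution ${\bf v}_\varepsilon$ of \eqref{sys-v} with vanishing data so that ${\bf u}_\varepsilon={\bf u}-{\bf v}_\varepsilon$ solves exactly the piecewise-mollified system (rather than re-solving and invoking uniqueness), flattens the interface by $(t,x',x_d)\mapsto(t,x',x_d-h_{j_0,\varepsilon}(t,x'))$, and then exploits that the transformed coefficients are smooth in $(t,x')$: differentiating the system in $t$ and in the tangential directions and applying Lemma \ref{lem loc lq} plus Sobolev embedding gives $\partial_t\tilde{\bf u}_\varepsilon,\,D^{m-1}D^{\alpha'}_{y'}\tilde{\bf u}_\varepsilon\in L_{\infty,loc}$; the missing normal derivatives are then recovered from the equation through the quantity $\sum_{|\beta|=m}\tilde A^{\bar\alpha\beta}_\varepsilon D^\beta\tilde{\bf u}_\varepsilon$, whose $D_d$-derivative is controlled via \cite[Corollary~4.4]{dk2011}, and ellipticity finally yields $D^m\tilde{\bf u}_\varepsilon\in L_{\infty,loc}$, uniformly enough to apply Lemma \ref{Dmubdd} and pass $\varepsilon\to0^+$. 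Without this (or an equivalent) argument your approximation step does not close. A minor additional point: the hypothesis $\gamma_0>\tfrac1{2m}$ enters through \eqref{est A}, where the term $r^{2m\gamma_0-1}$ must be a Dini function so that the interface-straightening error can be absorbed into $\tilde\omega_{A^{\alpha\beta}}$; it is not primarily a Step-2 flattening issue as your last paragraph suggests, since there the bounds are allowed to depend on $\varepsilon$.
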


\begin{proof}
We shall divide the proof of Proposition \ref{main prop} into two steps. We temporarily assume that $D^m{\bf u}$ is locally bounded with the estimate \eqref{est Dmu''} in Step 1. Then we drop the temporary assumption in Step 2.

{\bf Step 1}. In this step, we will prove Proposition \ref{main prop} by assuming that $D^m{\bf u}$ is locally bounded. Take $\Theta_{z_0,r}\in\mathbb R^{n\times \tbinom{d+m-1}{m}}$ such that
\begin{align*}
\Phi(z_{0},r)=\left(\fint_{Q_{r}(z_{0})}|(\widetilde{D}^\alpha{\bf u},{\bf U})-\Theta_{z_0,r}|^{\frac{1}{2}}\ dxdt\right)^{2}.
\end{align*}
We can find $\Theta_{z_0,\kappa^{l}r}$ in the same way, $l=1,\dots$. Using \eqref{iteration}, we have 
\begin{equation*}
\lim_{l\rightarrow\infty}\Phi(z_{0},\kappa^{l}r)=0.
\end{equation*}
Then for a.e. $z_0\in Q_{1/8}$, we have
\begin{equation}\label{limPhi}
\lim_{l\rightarrow\infty}\Theta_{z_0,\kappa^{l}r}=(\widetilde{D}^\alpha{\bf u}(z_0),{\bf U}(z_0)).
\end{equation}
It follows from the triangle inequality  that,  for $L\geq1$,
\begin{align}\label{estTheta}
|\Theta_{z_0,\kappa^{L}r}-\Theta_{z_0,r}|\leq N\sum_{l=0}^{L}\Phi(z_{0},\kappa^{l}r).
\end{align}
Thus, taking $L\rightarrow\infty$ in \eqref{estTheta}, combining \eqref{limPhi}, \eqref{iteration}, the fact that
$$\sum_{i=0}^{\infty}\tilde\omega_{\bullet}(\kappa^{i}r)\leq N\int_{0}^{r}\frac{\tilde\omega_{\bullet}(s)}{s}\ ds,$$
(see, for instance, \cite[Lemma 2.7]{dk2017}), and \eqref{est phi'}, we obtain for $0<r<1/8$,
\begin{align}\label{Duz0}
&|(\widetilde{D}^\alpha{\bf u}(z_0),{\bf U}(z_0))-\Theta_{z_0,r}|\nonumber\\
&\leq N\sum_{l=0}^{\infty}\Phi(z_{0},\kappa^{l}r)\nonumber\\
&\leq N\left(\Phi(z_{0},r)
+\|D^m{\bf u}\|_{L_{\infty}(Q_{1/4})}\int_0^r\frac{\tilde{\omega}_{A^{\alpha\beta}}(s)}{s}\ ds+\int_0^r\frac{\tilde{\omega}_{{\bf f}_\alpha}(s)}{s}\ ds\right)\nonumber\\
&\leq Nr^\gamma\|(\widetilde{D}^\alpha{\bf u},{\bf U})\|_{L_{1}(Q_{1/4})}+N\left(\|D^m{\bf u}\|_{L_{\infty}(Q_{1/4})}\int_0^r\frac{\tilde{\omega}_{A^{\alpha\beta}}(s)}{s}\ ds+\int_0^r\frac{\tilde{\omega}_{{\bf f}_\alpha}(s)}{s}\ ds\right).
\end{align}
Suppose that $z_{1}=(t_1,x_1)\in Q_{1/8}\cap \cQ_{j_{1}}$ for some $j_{1}\in[1,l+1]$. The case of $|z_{0}-z_{1}|_{p}\geq 1/32$ follows from 
\begin{align*}
&|(\widetilde{D}^{\alpha}{\bf u}(z_{0}),{\bf U}(z_{0}))-(\widetilde{D}^{\alpha}{\bf u}(z_{1}),{\bf U}(z_{1}))|\leq N|z_{0}-z_{1}|_{p}^\gamma\big(\|D^m{\bf u}\|_{L_\infty(Q_{1/4})}+\|{\bf f}_{\bar\alpha}\|_{L_\infty(Q_{1/4})}\big)
\end{align*}
and Lemma \ref{Dmubdd}.  When  $|z_{0}-z_{1}|_{p}\leq 1/32$, we set $r=|z_{0}-z_{1}|_{p}$. Recalling that for each $z_0$, the coordinate system is chosen according to it. Now let us continue the proof according to the following two cases:

{\bf Case 1}: $Q_{r}(z_1) \subset \cQ_{j_0}$. By Lemma \ref{lemma itera2} and a similar argument that led to \eqref{Duz0}, we obtain
\begin{align}\label{Duz1}
&|(\widetilde{D}^\alpha{\bf u}(z_1),{\bf U}(z_1))-\Theta_{z_1,r}|\nonumber\\
&\leq N\sum_{l=0}^{\infty}\Phi_{z_0}(z_{1},\kappa^{l}r)\nonumber\\
&\leq Nr^\gamma\|(\widetilde{D}^\alpha{\bf u},{\bf U})\|_{L_{1}(Q_{1/4})}+N\left(\|D^m{\bf u}\|_{L_{\infty}(Q_{1/4})}\int_0^r\frac{\tilde{\omega}_{A^{\alpha\beta}}(s)}{s}\ ds+\int_0^r\frac{\tilde{\omega}_{{\bf f}_\alpha}(s)}{s}\ ds\right).
\end{align}
By using the triangle inequality, we have
\begin{align*}
&|(\widetilde{D}^{\alpha}{\bf u}(z_{0}),{\bf U}(z_{0}))-(\widetilde{D}^{\alpha}{\bf u}(z_{1}),{\bf U}(z_{1}))|^{1/2}\nonumber\\
&\leq|(\widetilde{D}^{\alpha}{\bf u}(z_{0}),{\bf U}(z_{0}))-\Theta _{z_{0},r}|^{1/2}+|(\widetilde{D}^{\alpha}{\bf u}(z),{\bf U}(z))-\Theta_{z_{0},r}|^{1/2}+|(\widetilde{D}^{\alpha}{\bf u}(z),{\bf U}(z))-\Theta _{z_1,r}|^{1/2}\nonumber\\
&\quad+|(\widetilde{D}^{\alpha}{\bf u}(z_{1}),{\bf U}(z_{1}))-\Theta_{z_1,r}|^{1/2},\quad\forall~z\in Q_{r}(z_{0})\cap Q_{r}(z_1).
\end{align*}
Then taking the average over $z\in Q_{r}(z_{0})\cap Q_{r}(z_1)$ and taking the square, using \eqref{Duz0}, \eqref{Duz1}, and Lemma \ref{Dmubdd}, we derive 
\begin{align}\label{est-prori}
&|(\widetilde{D}^{\alpha}{\bf u}(z_{0}),{\bf U}(z_{0}))-(\widetilde{D}^{\alpha}{\bf u}(z_{1}),{\bf U}(z_{1}))|\nonumber\\
&\leq N|z_{0}-z_{1}|_{p}^\gamma\|(\widetilde{D}^\alpha{\bf u},{\bf U})\|_{L_{1}(Q_{1/4})}+N\int_0^r\frac{\tilde{\omega}_{{\bf f}_\alpha}(s)}{s}\ ds\nonumber\\
&\quad+N\int_0^r\frac{\tilde{\omega}_{A^{\alpha\beta}}(s)}{s}\ ds\left(\|(\widetilde{D}^\alpha{\bf u},{\bf U})\|_{L_{1}(Q_{1})}+\int_0^1\frac{\tilde{\omega}_{{\bf f}_{\alpha}}(s)}{s}\ ds+\|{\bf f}_{\bar{\alpha}}\|_{L_\infty(Q_1)}\right).
\end{align}

{\bf Case 2}:  $Q_{r}(z_1) \not\subset \cQ_{j_0}$.
By a similar argument as in \cite[pp. 21-22]{dx2021}, we obtain \eqref{est-prori}. This finishes the proof of Proposition \ref{main prop} under the assumption that $D^m{\bf u}$ is locally bounded.

{\bf Step 2}. Now let us remove the assumption that $D^m{\bf u}$ is locally bounded by using the technique of flattening the boundary and an approximation argument.
For any point $z_0=(t_0,x_0)\in\partial_p\cQ_{j_0}$, $j_0=1,\dots,M-1$, we will prove that $D^m{\bf u}$ is bounded in a neighborhood of $z_0$. Since $z_0$ belongs to the boundaries of at most two of the subdomains, we may assume that there exist a constant $0<r<1$ and a function $h_{j_0}(t,x')$ which is $C^{1,\text{Dini}}$ in $x'$, $C^{\gamma_{0}}$ in $t$, and $|\nabla_{x'}h_{j_0}(t_0,x'_0)|=0$, such that $h_{j_0}(t,x')$ divides $Q_{r}(z_0)$ into two subdomains $Q_{r}^+(z_0)$ and $Q_{r}^-(z_0)$. 
Let $\zeta_0$ be a  smooth even function in $\mathbb R$ with a compact support in $(-1,1)$ satisfying 
\begin{equation*}
\int_{\mathbb R}\zeta_0(t)\ dt=1,\quad \int_{\mathbb R}t^2\zeta_0(t)\ dt=0.
\end{equation*}
Set 
\begin{equation*}
\zeta(t,x')=\zeta_0(t+1)\prod_{i=1}^{d-1}\zeta_0(x_i).
\end{equation*}
For $\varepsilon>0$, let $$\zeta_{\varepsilon}(t,x')=\varepsilon^{-(d+2m-1)}\zeta(\varepsilon^{-2m}t,\varepsilon^{-1}x').$$ 
Let $h_{j_0,\varepsilon}(t,x')$ be a mollification of $h_{j_0}(t,x')$ defined as follows:
\begin{align*}
h_{j_0,\varepsilon}(t,x')=\int_{\mathbb R}\int_{\mathbb R^{d-1}}h_{j_0}(s,y')\zeta_{\varepsilon}(t-s,x'-y')\ dy'ds,
\end{align*} 
where $\varepsilon>0$ is a sufficiently small constant.

Define the shifted point
\begin{equation*}
x_\varepsilon=\begin{cases}
x+\lambda\varepsilon e_d,\quad x_d>h_{j_0,\varepsilon}(t,x'),\\
x-\lambda\varepsilon e_d,\quad x_d<h_{j_0,\varepsilon}(t,x'),
\end{cases}
\end{equation*}
where $\lambda>0$ is some number such that $Q_{\varepsilon}(t_0,x_\varepsilon):=(t_0-\varepsilon^{2m},t_0)\times B_{\varepsilon}(x_\varepsilon)$ lies in $Q_{r}^+(z_0)$ or $Q_{r}^-(z_0)$. Set 
\begin{equation*}
\eta_\varepsilon(t,x)=\varepsilon^{-(d+2m)}\eta(\varepsilon^{-2m}t,\varepsilon^{-1}x),
\end{equation*}
where $\eta\geq0$ is a infinitely differentiable function with unit integral supported in $Q_1$. Define the piecewise mollification of $A^{\alpha\beta}$ as follows:
\begin{equation*}
A^{\alpha\beta}_\varepsilon(t,x)=\int_{Q_\varepsilon(t_0,x_\varepsilon)}\eta_\varepsilon(t-s,x_\varepsilon-y)A^{\alpha\beta}(s,y)\ ds dy.
\end{equation*}
Similarly, we can define the  piecewise mollification ${\bf f}_{\alpha,\varepsilon}$ of ${\bf f}_{\alpha}$. Then $A^{\alpha\beta}_\varepsilon$ satisfies \eqref{ellipticity}, and  $A^{\alpha\beta}_\varepsilon$, ${\bf f}_{\alpha,\varepsilon}$ are of piecewise smooth in $Q_r(z_0)$ satisfying
$$\omega_{A^{\alpha\beta}_\varepsilon}\leq \omega_{A^{\alpha\beta}},\quad \omega_{{\bf f}_{\alpha,\varepsilon}}\leq \omega_{{\bf f}_{\alpha}}.$$
Moreover, as $\varepsilon\rightarrow 0^+$, 
\begin{equation*}
{\bf f}_{\alpha,\varepsilon}\rightarrow{\bf f}_{\alpha}~\mbox{in}~L_2(Q_r(z_0)),\quad A^{\alpha\beta}_\varepsilon\rightarrow A^{\alpha\beta}~a.e.
\end{equation*}

Let ${\bf v}_\varepsilon\in \mathcal{H}_p^m(Q_r(z_0))$ $(1<p<\infty)$ be the solution to 
\begin{equation}\label{sys-v}
\begin{cases}
\partial_t{\bf v}_\varepsilon+(-1)^m\sum_{|\alpha|=|\beta|=m}D_\alpha(A^{\alpha\beta}_{\varepsilon}D^\beta {\bf v}_\varepsilon)=\sum_{|\alpha|=m}D_\alpha ({\bf f}_\alpha-{\bf f}_{\alpha, \varepsilon})\\
\qquad\qquad\qquad\qquad\qquad\qquad+(-1)^m\sum_{|\alpha|=|\beta|=m}D_\alpha(({A}^{\alpha\beta}_{\varepsilon}-A^{\alpha\beta}) D^\beta {\bf u})\quad\mbox{in}~Q_r(z_0),\\
{\bf v}_\varepsilon=|D{\bf v}_\varepsilon|=\cdots=|D^{m-1}{\bf v}_\varepsilon|=0\quad\mbox{on}~\partial_pQ_r(z_0).
\end{cases}
\end{equation}
Note that the right-hand side of the equation in \eqref{sys-v} goes to zero as $\varepsilon\rightarrow 0^+$. Then by the $\mathcal{H}_2^m$-estimate, we have 
\begin{equation*}
\|D^m{\bf v}_\varepsilon\|_{L_2(Q_r(z_0))}\rightarrow0\quad\mbox{as}~\varepsilon\rightarrow 0^+.
\end{equation*}
This implies that there is a subsequence, still denoted by ${\bf v}_\varepsilon$, such that $D^m{\bf v}_\varepsilon\rightarrow0,~a.e.$ in $Q_r(z_0)$. 
Set ${\bf u}_\varepsilon={\bf u}-{\bf v}_\varepsilon$. Then in $Q_r(z_0)$,
\begin{equation}\label{eq-u}
\partial_t{\bf u}_\varepsilon+(-1)^m\sum_{|\alpha|=|\beta|=m}D_\alpha(A^{\alpha\beta}_{\varepsilon}D^\beta {\bf u}_\varepsilon)=\sum_{|\alpha|=m}D_\alpha{\bf f}_{\alpha,\varepsilon},
\end{equation}
and hence, to show the boundedness of $D^m{\bf u}$ near $z_0$, it suffices to prove that $D^m{\bf u}_\varepsilon$ is uniformly bounded near $z_0$. To this end, let 
\begin{equation*}
(s,y',y_d)=\boldsymbol\Phi(t,x',x_d)=(t,x',x_d-h_{j_0,\varepsilon}(t,x')),
\end{equation*}
and define $\tilde{\bf u}_\varepsilon(s,y',y_d)={\bf u}_\varepsilon(t,x',x_d)$. Then from \eqref{eq-u}, we have 
\begin{equation}\label{eq-tildeu}
\partial_t\tilde{\bf u}_\varepsilon+(-1)^m\sum_{|\alpha|=|\beta|=m}D_\alpha(\tilde A^{\alpha\beta}_{\varepsilon}D^\beta \tilde{\bf u}_\varepsilon)-\partial_th_{j_0,\varepsilon}D_d\tilde{\bf u}_\varepsilon=\sum_{|\alpha|=m}D_\alpha\tilde{\bf f}_{\alpha,\varepsilon}\quad\mbox{in}~Q_{r_1},
\end{equation}
where $\overline{Q_{r_1}}\subset\boldsymbol\Phi(Q_r(z_0))$, $\tilde A^{\alpha\beta}_\varepsilon$ and $\tilde{\bf f}_{\alpha,\varepsilon}$ are the new coefficients and data under the transformation $\Lambda:=(\frac{\partial y_i}{\partial x_j})_{i,j=1}^{d}$, which are piecewise DMO in $Q_{r_1}$. To prove $D^m{\tilde{\bf u}_\varepsilon}$ is locally bounded, we first note that, by using Lemma \ref{lemlocal}, we have for any $p\in(1,\infty)$,
\begin{align}\label{est-Dmu}
\|D^m\tilde{\bf u}_\varepsilon\|_{L_{p}(Q_{{3r_1}/4})}\leq N\big(\|\tilde{\bf u}_\varepsilon\|_{L_{p}(Q_{r_1})}+\sum_{|\alpha|=m}\|\tilde{\bf f}_{\alpha,\varepsilon}\|_{L_{p}(Q_{r_1})}\big),
\end{align}
where the constant $N$ depends on $n,d,m,p,\nu,\Lambda,r_0$, and $\|\partial_th_{j_0,\varepsilon}\|_{L_\infty}$. Following the same argument as in \cite[Lemma 3.5]{dk2011}, we have 
\begin{align}\label{partiat}
\|\partial_t\tilde{\bf u}_\varepsilon\|_{L_{2}(Q_{{2r_1}/3})}\leq N\big(\|D^m\tilde{\bf u}_\varepsilon\|_{L_{2}(Q_{{3r_1}/4})}+\sum_{|\alpha|=m}\|\tilde{\bf f}_{\alpha,\varepsilon}\|_{L_{2}(Q_{r_1})}\big).
\end{align}
Next by taking the derivative of the equation \eqref{eq-tildeu} with respect to $t$, we have 
\begin{align*}
&\partial_t(\partial_t\tilde{\bf u}_\varepsilon)+(-1)^m\sum_{|\alpha|=|\beta|=m}D_\alpha(\tilde A^{\alpha\beta}_{\varepsilon}D^\beta \partial_t\tilde{\bf u}_\varepsilon)-\partial_th_{j_0,\varepsilon}D_d\partial_t\tilde{\bf u}_\varepsilon \\
&=\sum_{|\alpha|=m}D_\alpha(\partial_t\tilde{\bf f}_{\alpha,\varepsilon})+\partial_t^2h_{j_0,\varepsilon}D_d\tilde{\bf u}_\varepsilon 
-(-1)^m\sum_{|\alpha|=|\beta|=m}D_\alpha(\partial_t\tilde A^{\alpha\beta}_{\varepsilon}D^\beta \tilde{\bf u}_\varepsilon)\quad\mbox{in}~Q_{r_1}.
\end{align*}
Applying Lemma \ref{lem loc lq}, using \eqref{est-Dmu} and \eqref{partiat}, we obtain for $p>2$,
\begin{align}\label{est-utDm}
\|\partial_t\tilde{\bf u}_\varepsilon\|_{\mathcal{H}_{p}^m(Q_{{r_1}/2})}&\leq N\big(\|\partial_t\tilde{\bf u}_\varepsilon\|_{L_{2}(Q_{{2r_1/3}})}+\sum_{|\alpha|=m}\|\partial_t\tilde{\bf f}_{\alpha,\varepsilon}\|_{L_{p}(Q_{r_1})}+\| \partial_t^2h_{j_0,\varepsilon}D_d\tilde{\bf u}_\varepsilon\|_{L_{p}(Q_{r_1})}\nonumber\\
&\quad+\|D^m\tilde{\bf u}_\varepsilon\|_{L_{p}(Q_{{2r_1}/3})}\big)\nonumber\\
&\leq N\big(\|\tilde{\bf u}_\varepsilon\|_{L_{p}(Q_{r_1})}+\sum_{|\alpha|=m}\|\tilde{\bf f}_{\alpha,\varepsilon}\|_{L_{p}(Q_{r_1})}+\sum_{|\alpha|=m}\|\partial_t\tilde{\bf f}_{\alpha,\varepsilon}\|_{L_{p}(Q_{r_1})}\big).
\end{align}
Note that the constant in \eqref{est-utDm} may depend on $\varepsilon$. Similarly, for $\alpha=(\alpha_1,\dots,\alpha_{d-1},\alpha_d)=(\alpha',\alpha_d)$, we have
\begin{equation}\label{est-ut00}
D_{y'}^{\alpha'}\tilde{\bf u}_\varepsilon\in \mathcal{H}_{p,loc}^m(Q_{r_1}).
\end{equation}
By the Sobolev embedding theorem for $p>d+2m$, we obtain 
\begin{equation}\label{estDm-1}
\partial_t\tilde{\bf u}_\varepsilon, ~D^{m-1}D_{y'}^{\alpha'}\tilde{\bf u}_\varepsilon\in L_{\infty,loc}(Q_{r_1}).
\end{equation}
From \eqref{eq-tildeu}, we have 
\begin{align*}
D_d^m\big(\sum_{|\beta|=m}\tilde A^{\bar\alpha\beta}_{\varepsilon}D^\beta \tilde{\bf u}_\varepsilon\big)&=(-1)^{m+1}\partial_t\tilde{\bf u}_\varepsilon+(-1)^m\sum_{|\alpha|=m}D_\alpha\tilde{\bf f}_{\alpha,\varepsilon}+(-1)^m \partial_th_{j_0,\varepsilon}D_d\tilde{\bf u}_\varepsilon\\
&\quad-\sum_{\substack{|\alpha|=|\beta|=m\\
\alpha_d<m}}D_d^{\alpha_d}D_{y'}^{\alpha'}(\tilde A^{\alpha\beta}_{\varepsilon}D^\beta \tilde{\bf u}_\varepsilon),
\end{align*}
where $\bar\alpha=me_d$. Then by using \cite[Corollary 4.4]{dk2011}, \eqref{est-utDm}, and \eqref{est-ut00}, we obtain 
$$D_d\big(\sum_{|\beta|=m}\tilde A^{\bar\alpha\beta}_{\varepsilon}D^\beta \tilde{\bf u}_\varepsilon\big)\in L_{p,loc}(Q_{r_1}).$$ 
This together with \eqref{est-utDm}, \eqref{est-ut00} and $\sum_{|\beta|=m}\tilde A^{\bar\alpha\beta}_{\varepsilon}D^\beta \tilde{\bf u}_\varepsilon\in L_{p,loc}(Q_{r_1})$ 
yields 
$$\sum_{|\beta|=m}\tilde A^{\bar\alpha\beta}_{\varepsilon}D^\beta \tilde{\bf u}_\varepsilon\in\mathcal{H}_{p,loc}^1(Q_{r_1}).$$
It follows from the Sobolev embedding theorem for $p>d+2$ that 
$$\sum_{|\beta|=m}\tilde A^{\bar\alpha\beta}_{\varepsilon}D^\beta \tilde{\bf u}_\varepsilon\in L_{\infty,loc}(Q_{r_1}).$$
Using \eqref{ellipticity} and \eqref{estDm-1}, we derive the local boundedness of $D^m\tilde{\bf u}_\varepsilon$ and thus, $D^m{\bf u}_\varepsilon$ is bounded near $z_0$. Then by applying the a priori  estimate in Lemma \ref{Dmubdd} to get a uniform $L_\infty$-estimate of $D^m{\bf u}_\varepsilon$  independent of $\varepsilon$, and finally taking $\varepsilon\rightarrow0^+$, we obtain the boundedness of $D^m{\bf u}$ near $z_0$. Proposition \ref{main prop} is proved.
\end{proof}

\section{Proof of Theorem \ref{mainthm2}}

The estimate \eqref{Dmuholder} follows from Theorem \ref{mainthm} and \cite[Lemma 5.1]{dx2021}. Moreover, combining Lemma \ref{Dmubdd} with the argument in Step 2 of the proof of Proposition \ref{main prop}, we obtain the estimate of $\|D^m{\bf u}\|_{L_\infty}$ in \eqref{edt-ut}. We next show the estimate of $\langle{\bf u}\rangle_{\frac{m+\delta'}{2m}}$ in \eqref{edt-ut}. 

For any $z_{0}=(t_0,x_0)\in  Q_{3/4}$, as mentioned on page \pageref{systemmain}, we assume $z_0\in Q_{3/4}\cap \cQ_{j_{0}}$ for some $j_0\in\{1,\dots,l+1\}$. For $(t,x)\in \cQ_j$ with $\cQ_j$ given in \eqref{def-subdomain}, we can take the piecewise constant functions as follows:
\begin{equation}\label{def-bar-A}
\bar{A}_{z_0}^{\alpha\beta}(t,x)=\bar{A}_{z_0}^{\alpha\beta}(x_d)=\begin{cases}
\lim\limits_{\cQ_j\ni(s,y)\rightarrow(t_0,x'_0,h_j(t_0,x'_0))}A^{\alpha\beta}(s,y),& j>j_0,\\
A^{\alpha\beta}(t_0,x_0),& j=j_0,\\
\lim\limits_{\cQ_j\ni(s,y)\rightarrow(t_0,x'_0,h_{j-1}(t_0,x'_0))}A^{\alpha\beta}(s,y),& j<j_0.
\end{cases}
\end{equation}
Note that $\bar{A}_{z_0}^{\alpha\beta}$ is independent of the radius $r$, which is different from the scenario where the coefficients are of piecewise Dini mean oscillation. With these piecewise constant functions, we denote
\begin{align*}
\mathbb{P}^{z_0}&=\Bigg\{{\bf p}:{\bf p}(x)=\sum_{\substack{0\leq|\beta|\leq m\\
\beta\neq me_d}}\frac{(x_1-x_{0,1})^{\beta_1}\cdots (x_d-x_{0,d})^{\beta_d}}{\beta_1!\cdots\beta_d!}\ell_\beta\\
&\qquad+\int_{x_{0,d}}^{x_{d}}\int_{x_{0,d}}^{s_{m-1}}\dots\int_{x_{0,d}}^{s_{1}}\Big(\bar{A}_{z_0}^{\bar\alpha\bar\alpha}(s)\Big)^{-1}
\Big(\ell_{\bar\alpha}-\sum_{\substack{|\beta|=m\\\beta\neq me_d}}
\bar{A}_{z_0}^{\bar\alpha\beta}(s)\ell_{\beta}\Big)\ dsds_1\dots ds_{m-1} \Bigg\},
\end{align*}
where $\ell_{\beta}$'s are constants and $\bar\alpha=me_d$.

\begin{lemma} \label{lem3.13}
Let $r,q\in (0,\infty)$ and ${\bf p}\in \mathbb{P}^{z_0}$. Suppose that
\begin{equation}\label{est-Dp}
\fint_{B_r(x_0)}|D^\beta{\bf p}(x)|^q\,dx\le C_0^q r^{q(m-|\beta|+\delta')},
\end{equation}
where $C_0, \delta'>0$ are constants and $|\beta|\leq m$. Then we have
$$
|\ell_\beta|\le NC_0 r^{m-|\beta|+\delta'},\ |\beta|\leq m,
$$
where $N>0$ depends only on $d$, $n$, $\nu$, $q$, and $\delta'$.
\end{lemma}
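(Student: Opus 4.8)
The plan is to exploit the very specific triangular structure of elements of $\mathbb{P}^{z_0}$: such a polynomial ${\bf p}$ is built from the free parameters $\ell_\beta$ with $|\beta|\le m$, $\beta\ne me_d$, together with $\ell_{\bar\alpha}$, and the map $(\ell_\beta)\mapsto (D^\beta{\bf p})$ is, in a suitable ordering by $|\beta|$, a bijective linear map whose inverse has norm controlled only by $d,n,\nu,q$. So I would first reduce everything to the reference scale $r=1$ by the rescaling $x\mapsto x_0+r(x-x_0)$ (equivalently $\ell_\beta\mapsto r^{m-|\beta|}\ell_\beta$ after also accounting for the factor coming from the $(m)$-fold integral in the last term of ${\bf p}$, which is homogeneous of the right degree), so that the hypothesis \eqref{est-Dp} becomes $\fint_{B_1(x_0)}|D^\beta{\bf p}|^q\,dx\le N C_0^q r^{q(m-|\beta|+\delta')}$ after we track the scaling, and the claim becomes $|\ell_\beta|\le N C_0 r^{m-|\beta|+\delta'}$, with the $r$-powers now carried as constants.

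Next I would argue by a downward induction on $|\beta|$, starting from $|\beta|=m$. For a multi-index $\beta$ with $|\beta|=m$ and $\beta\ne me_d$, we simply have $D^\beta{\bf p}\equiv\ell_\beta$ (the polynomial part contributes $\ell_\beta$ and the integral part contributes $0$ for these $\beta$, by the defining property recorded just before Lemma~\ref{lem3.13}), so \eqref{est-Dp} instantly gives $|\ell_\beta|^q\le N C_0^q r^{q\delta'}$, i.e. the claim for those $\beta$. For $\beta=\bar\alpha=me_d$ we have, again from the definition, $D_d^m{\bf p}=(\bar A_{z_0}^{\bar\alpha\bar\alpha})^{-1}(\ell_{\bar\alpha}-\sum_{|\beta|=m,\beta\ne me_d}\bar A_{z_0}^{\bar\alpha\beta}\ell_\beta)$, which is a constant; taking $\beta=\bar\alpha$ in \eqref{est-Dp}, using the ellipticity \eqref{ellipticity} (which bounds $|\bar A_{z_0}^{\bar\alpha\bar\alpha}|$ and $|(\bar A_{z_0}^{\bar\alpha\bar\alpha})^{-1}|$ by $\nu,\nu^{-1}$) together with the bound $|\bar A_{z_0}^{\bar\alpha\beta}|\le\nu^{-1}$, and the already-established bounds on $\ell_\beta$ for $|\beta|=m$, $\beta\ne me_d$, one solves for $\ell_{\bar\alpha}$ and gets $|\ell_{\bar\alpha}|\le N C_0 r^{\delta'}$.

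Now suppose $|\beta|=k<m$ and all $\ell_\gamma$ with $|\gamma|>k$ have been estimated. Writing ${\bf p}$ explicitly, $D^\beta{\bf p}(x)=\ell_\beta+(\text{a polynomial in }x-x_0\text{ whose coefficients are among the }\ell_\gamma,\ |\gamma|>k)$: differentiating the $\beta_1!\cdots\beta_d!$-normalized monomial basis picks out $\ell_\beta$ as the constant term and leaves higher-order monomials with coefficients $\ell_\gamma$, $|\gamma|=|\beta|+$ something, plus the contribution from differentiating the iterated-integral term, which likewise is a polynomial in $x_d-x_{0,d}$ with coefficients built linearly (via $\bar A_{z_0}^{\bar\alpha\bullet}$, hence with bounds from \eqref{ellipticity}) out of the $\ell_\gamma$ with $|\gamma|=m$. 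Averaging $|D^\beta{\bf p}|^q$ over $B_1(x_0)$ and using that $\fint_{B_1(x_0)}|x-x_0|^{\text{(positive power)}}\,dx$ is a finite dimensional constant, the triangle inequality in $L^q(B_1(x_0))$ gives $|\ell_\beta|\le N\big(\fint_{B_1(x_0)}|D^\beta{\bf p}|^q\big)^{1/q}+N\sum_{|\gamma|>k}|\ell_\gamma|$; the first term is $\le N C_0 r^{m-k+\delta'}$ by \eqref{est-Dp}, and each term in the sum is $\le N C_0 r^{m-|\gamma|+\delta'}\le N C_0 r^{m-k+\delta'}$ since $r$ can be assumed bounded (say $r\le 1/4$, as in the applications, and the statement allows absorbing a larger $r$ into $N$) and $m-|\gamma|\ge m-k$ would be the wrong direction, so here one uses $|\gamma|>k\Rightarrow m-|\gamma|< m-k$ hence $r^{m-|\gamma|+\delta'}\le r^{m-k+\delta'}$ only when $r\le1$ — this is the one place to be a little careful, and it is exactly why the hypothesis is stated for a fixed $r$ and the conclusion carries the matching power. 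Iterating down to $|\beta|=0$ completes the proof. The main (mild) obstacle is purely bookkeeping: correctly tracking the homogeneity of the iterated-integral term in ${\bf p}$ under rescaling and verifying that differentiating it produces only terms of strictly higher total order in $x_d-x_{0,d}$, so that the downward induction closes; once the structure is written out, each step is elementary linear algebra plus the uniform ellipticity bounds \eqref{ellipticity}.
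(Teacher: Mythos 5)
Your treatment of the top-order coefficients is fine and matches the paper: for $|\beta|=m$, $\beta\neq me_d$ one has $D^\beta{\bf p}\equiv\ell_\beta$, and $\ell_{\bar\alpha}$ is then recovered from $D_d^m{\bf p}$ using \eqref{ellipticity}. The problem is in the downward induction for $|\beta|=k<m$, where the argument as written is wrong. In $D^\beta{\bf p}$ each higher-order coefficient $\ell_\gamma$, $|\gamma|>k$, enters multiplied by the monomial $(x-x_0)^{\gamma-\beta}/(\gamma-\beta)!$ (and the iterated-integral term enters as an $(m-\beta_d)$-fold integral from $x_{0,d}$ of a bounded function); on $B_r(x_0)$ these carry the crucial factor $r^{|\gamma|-k}$ (respectively $r^{m-\beta_d}$), and it is precisely this factor that turns the already-proved bound $|\ell_\gamma|\le NC_0r^{m-|\gamma|+\delta'}$ into $|\ell_\gamma|\,r^{|\gamma|-k}\le NC_0r^{m-k+\delta'}$, with exactly matching powers and no restriction on $r$ — which is why the lemma can be stated for $r\in(0,\infty)$ with $N$ independent of $r$. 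By rescaling to $B_1$ but keeping the unrescaled coefficients $\ell_\gamma$, you discard this factor, and your attempted patch, ``$|\gamma|>k\Rightarrow m-|\gamma|<m-k$ hence $r^{m-|\gamma|+\delta'}\le r^{m-k+\delta'}$ for $r\le1$,'' is backwards: for $r\le1$ a smaller exponent gives a \emph{larger} power, so $r^{m-|\gamma|+\delta'}\ge r^{m-k+\delta'}$. As written, your induction only yields $|\ell_\beta|\le NC_0r^{\delta'}$ for every $\beta$, not the claimed $|\ell_\beta|\le NC_0r^{m-|\beta|+\delta'}$, and it additionally (and impermissibly) tries to absorb the size of $r$ into $N$, which the statement does not allow.

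The repair is exactly the paper's bookkeeping: work on $B_r(x_0)$ without normalizing the coefficients (or, equivalently, rescale the coefficients too, $\tilde\ell_\gamma=r^{|\gamma|}\ell_\gamma$, so that all targets become the single power $r^{m+\delta'}$), write for $|\beta|=k$
\begin{equation*}
D^\beta{\bf p}(x)=\ell_\beta+{\bf R}(x),\qquad |{\bf R}(x)|\le N\sum_{|\gamma|>k}|\ell_\gamma|\,r^{|\gamma|-k}+Nr^{m-k}\Big(|\ell_{\bar\alpha}|+\sum_{|\gamma|=m}|\ell_\gamma|\Big)\quad\text{on }B_r(x_0),
\end{equation*}
and then average \eqref{est-Dp} (taking care of $q<1$ via $|a+b|^q\le|a|^q+|b|^q$, since the application uses $q=1/2$). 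With the inductive hypothesis this gives $|{\bf R}|\le NC_0r^{m-k+\delta'}$ and hence $|\ell_\beta|\le NC_0r^{m-k+\delta'}$, closing the induction for all $r>0$. With that correction your proof coincides with the paper's.
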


\begin{proof}
From the definition of ${\bf p}$, we have 
\begin{equation*}
D^\beta{\bf p}=\ell_\beta,\quad |\beta|=m,~\beta\neq me_d,
\end{equation*}
which together with \eqref{est-Dp} implies that 
\begin{equation}\label{ell-beata}
|\ell_\beta|\le NC_0 r^{\delta'}, \quad |\beta|=m,~\beta\neq me_d.
\end{equation} 
A direct calculation gives
\begin{equation*}
D_d^m{\bf p}=\Big(\bar{A}_{z_0}^{\bar\alpha\bar\alpha}(x_d)\Big)^{-1}
\Big(\ell_{\bar\alpha}-\sum_{\substack{|\beta|=m\\\beta\neq me_d}}
\bar{A}_{z_0}^{\bar\alpha\beta}(x_d)\ell_{\beta}\Big).
\end{equation*}
Then by using \eqref{ellipticity}, \eqref{est-Dp}, and \eqref{ell-beata}, we obtain
\begin{equation}\label{est-ld}
|\ell_{\bar\alpha}|\le NC_0 r^{\delta'}.
\end{equation}
For $|\beta|=m-1$, further calculations show that, if $\beta\neq (m-1)e_d$, then 
\begin{align*}
D^\beta{\bf p}=\ell_\beta+
{\bf R}_1(x,\ell_{\alpha})1_{|\alpha|=m, \alpha\neq me_d};
\end{align*}
if $\beta=(m-1)e_d$, then
\begin{align*}
D^\beta{\bf p}=\ell_\beta+
{\bf R}_2(x,\ell_{\alpha})1_{|\alpha|=m, \alpha\neq me_d}+\int_{x_{0,d}}^{x_{d}}\Big(\bar{A}_{z_0}^{\bar\alpha\bar\alpha}(s)\Big)^{-1}
\Big(\ell_{\bar\alpha}-\sum_{\substack{|\beta|=m\\\beta\neq me_d}}
\bar{A}_{z_0}^{\bar\alpha\beta}(s)\ell_{\beta}\Big)\ ds,
\end{align*}
where $|{\bf R}_i(x,\ell_{\alpha})1_{|\alpha|=m, \alpha\neq me_d}|\leq NC_0 r^{1+\delta'}$. Combining \eqref{ell-beata} and \eqref{est-ld}, we get 
$$|\ell_\beta|\le NC_0 r^{1+\delta'}, \quad |\beta|=m-1.$$
Similarly, for $0\leq|\beta|\leq m-2$, by induction, we can prove that
$$|\ell_\beta|\le NC_0 r^{m-|\beta|+\delta'}.$$
The proof of the lemma is finished.
\end{proof}

\begin{lemma}
For any $z_{0}\in Q_{1/8}$ and $r\in(0,1/4)$, in the coordinate system associated with $z_0$, we can find
${\bf p}^{r,z_0}:={\bf p}^{r,z_0}(x)$ in the form
\begin{align}\label{def-p}
&\sum_{\substack{0\leq|\beta|\leq m\\
\beta\neq me_d}}\frac{(x_1-x_{0,1})^{\beta_1}\cdots (x_d-x_{0,d})^{\beta_d}}{\beta_1!\cdots\beta_d!}\ell_\beta^{r,z_0}\nonumber\\
&+\int_{x_{0,d}}^{x_{d}}\int_{x_{0,d}}^{s_{m-1}}\dots\int_{x_{0,d}}^{s_{1}}\Big(\bar{A}_{z_0}^{\bar\alpha\bar\alpha}(s)\Big)^{-1}
\Big((-1)^m\bar{\bf f}_{\bar\alpha;z_0}(s)+\ell_{\bar\alpha}^{r,z_0}\nonumber\\
&-\sum_{|\beta|=m,\beta\neq me_d}
\bar{A}_{z_0}^{\bar\alpha\beta}(s)\ell_{\beta}^{r,z_0}\Big)\ dsds_1\dots ds_{m-1}\in {\bf u}_1+\mathbb{P}^{z_0},
\end{align}
where $\ell_\beta^{r,z_0}$ are constants, $\bar{\bf f}_{\bar\alpha;z_0}$ is defined as in \eqref{def-bar-A}, and ${\bf u}_1$ is defined in \eqref{defw u1} with $(\bar{A}_{z_0}^{\bar{\alpha}\bar\alpha}(s))^{-1}\bar{\bf f}_{\bar\alpha;z_0}(s)$ in place of $(\bar{A}_{r,z_0}^{\bar{\alpha}\bar\alpha}(s))^{-1}\bar{\bf f}_{\bar\alpha;r,z_0}(s)$,
such that
\begin{align}\label{difference Du p0}
\fint_{Q_{r}(z_{0})}|D^{\alpha}({\bf u}-{\bf p}^{r,z_{0}})|^{1/2}\leq NC_{0}^{1/2}r^{(m-|\alpha|+\delta')/2},
\end{align}
where $\delta'=\min\{\delta,\frac{\mu}{1+\mu},2m\gamma_0-1\}$, $|\alpha|\leq m$, and
\begin{equation}\label{def C0}
C_{0}=\sum_{j=1}^{M}|{\bf f}_\alpha|_{\delta/2,\delta;\overline{\cQ}_{j}}+\|{\bf u}\|_{L_{p}(\cQ)}+\|D{\bf u}\|_{L_{1}(\cQ)}.
\end{equation}
\end{lemma}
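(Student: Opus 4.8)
The plan is to construct $\mathbf{p}^{r,z_0}$ by an iteration/compactness scheme in the $L_{1/2}$-averaged setting, mirroring the proof of Lemma \ref{lemma itera} but with a \emph{fixed} (radius-independent) family of piecewise-constant coefficients $\bar A_{z_0}^{\alpha\beta}(x_d)$ and piecewise-constant data $\bar{\mathbf f}_{\bar\alpha;z_0}$, taking advantage of the stronger piecewise H\"older hypothesis. First I would define, for each dyadic radius $\kappa^k r$, the polynomial-type function $\mathbf{p}^{\kappa^k r,z_0}\in \mathbf{u}_1+\mathbb{P}^{z_0}$ realizing (up to a constant) the infimum
$$
\Psi_k:=\inf_{\mathbf p\in \mathbf u_1+\mathbb P^{z_0}}\Big(\fint_{Q_{\kappa^k r}(z_0)}|D^\alpha(\mathbf u-\mathbf p)|^{1/2}\Big)^2,\quad |\alpha|\le m,
$$
and then run the Campanato-type decay estimate. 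Concretely, on $Q_{\rho}(z_0)$ I decompose $\mathbf u=\mathbf u_1+\mathbf v+\mathbf w$ exactly as in \eqref{def_w}, where $\mathbf w$ solves the frozen-coefficient system and $\mathbf v$ absorbs the error $\mathbf F_\alpha$ which now, because $A^{\alpha\beta}$ and $\mathbf f_\alpha$ are piecewise $C^{\delta/2,\delta}$, satisfies $\|\mathbf F_\alpha\|_{L_\infty(Q_\rho)}\le NC_0\rho^{\delta'}$ after accounting for the boundary-flattening term (the factor $\rho^{2m\gamma_0-1}$ comes from the $C^{\gamma_0}$-in-$t$ regularity of $\cQ_j$, exactly as $\hat\omega_1$ in \eqref{est A}, and the factor $\rho^{\mu/(1+\mu)}$ from the $C^{1,\mu}$ change of variables). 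Applying Lemma \ref{weak est barv} to $\mathbf v$ and Lemma \ref{estDmw} (or rather its proof, i.e.\ Lemma \ref{lemma xn} scaled) to $\mathbf w$ gives the one-step improvement
$$
\Psi_{k+1}\le N_0\kappa\,\Psi_k+NC_0(\kappa^k r)^{\delta'},
$$
from which, choosing $\kappa$ small so that $N_0\kappa\le \kappa^{\delta'/2}$ say, and summing the geometric series (note $\delta'<1\le 2m$ so there is no resonance), one gets $\Psi_k\le N(\kappa^k r/r)^{\delta'}\Psi_0+NC_0(\kappa^k r)^{\delta'}$, hence $\Psi_k\le NC_0(\kappa^k r)^{\delta'}$ since $\Psi_0\le NC_0 r^{\delta'}$ by the $L_1$-bound on $D\mathbf u$ in $C_0$. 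Interpolating over non-dyadic radii yields \eqref{difference Du p0} for $|\alpha|=m$; the cases $|\alpha|<m$ follow by integrating, using that differences of the $\mathbf p^{\kappa^k r,z_0}$ are controlled, together with Lemma \ref{lem3.13} applied to $\mathbf p^{\kappa^{k}r,z_0}-\mathbf p^{\kappa^{k+1}r,z_0}\in\mathbb P^{z_0}$ to get $|\ell_\beta^{\kappa^k r,z_0}-\ell_\beta^{\kappa^{k+1}r,z_0}|\le NC_0(\kappa^k r)^{m-|\beta|+\delta'}$, so the limit $\ell_\beta^{z_0}:=\lim_k \ell_\beta^{\kappa^k r,z_0}$ exists and $\mathbf p^{r,z_0}$ with these coefficients does the job.

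The construction of the limiting coefficients and verification that $\mathbf p^{r,z_0}$ genuinely lies in $\mathbf u_1+\mathbb P^{z_0}$ requires care about which coordinate system is used: $\bar A_{z_0}^{\alpha\beta}$ and the strips $\Omega_j$ are defined in the coordinate system attached to $z_0$ (with $\nabla_{x'}h_{j_0}(z_0)=0$), and the boundary-flattening step in Step 2 of Proposition \ref{main prop}'s proof has to be invoked first to legitimately assume $D^m\mathbf u$ is locally bounded near interfacial points. So I would begin by recording that reduction, then carry out the dyadic iteration for $z_0$ away from and on the interfaces separately (the on-interface case producing the extra $\rho^{2m\gamma_0-1}$ and $\rho^{\mu/(1+\mu)}$ terms exactly as in \cite{dx2021}), and finally combine.

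The main obstacle I expect is controlling the error term $\mathbf F_\alpha$ uniformly in the \emph{piecewise} H\"older norm across the time-dependent interface after flattening: the change of variables $\boldsymbol\Phi(t,x',x_d)=(t,x',x_d-h_{j_0,\varepsilon}(t,x'))$ introduces the term $\partial_t h_{j_0,\varepsilon}D_d\tilde{\mathbf u}_\varepsilon$ into the equation and produces new coefficients $\tilde A^{\alpha\beta}_\varepsilon$ whose oscillation picks up $\|\nabla_{x'}h_{j_0}\|_{C^\mu}$ contributions; tracking that these are $O(\rho^{\min\{\delta,\mu/(1+\mu)\}})$ and that the parabolic scaling of the time-interface regularity gives precisely the $\rho^{2m\gamma_0-1}$ threshold (so that $\delta'=\min\{\delta,\mu/(1+\mu),2m\gamma_0-1\}$ is sharp in this scheme) is the delicate bookkeeping. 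A secondary subtlety is ensuring the infimizing $\mathbf p^{\kappa^k r,z_0}$ can be chosen consistently so that the telescoping estimate on $\ell_\beta$ is valid — this is where Lemma \ref{lem3.13} is essential, as it converts the $L_{1/2}$ (equivalently $L^q$) smallness of $D^\beta(\mathbf p^{\kappa^k r,z_0}-\mathbf p^{\kappa^{k+1}r,z_0})$ into pointwise smallness of each coefficient $\ell_\beta$, with the correct power of the radius.
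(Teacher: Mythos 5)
Your scheme, as written, establishes at best the top-order case $|\alpha|=m$ of \eqref{difference Du p0}: with a one-step decay factor $N_0\kappa$ (in the squared $L_{1/2}$ sense) a Campanato iteration can only reach exponents below $1$, so it yields the rate $\rho^{\delta'}$ for $D^m({\bf u}-{\bf p})$ but cannot produce the rate $\rho^{\,m-|\alpha|+\delta'}$ required when $|\alpha|<m$. The claim that the cases $|\alpha|<m$ ``follow by integrating'' is the genuine gap. The $\alpha=0$ estimate is the heart of the lemma: since ${\bf p}^{r,z_0}$ depends on $x$ only while $Q_r(z_0)$ has time-length $r^{2m}$, \eqref{difference Du p0} with $\alpha=0$ encodes the $\frac{m+\delta'}{2m}$-H\"older continuity of ${\bf u}$ in $t$, and no spatial integration of the $|\alpha|=m$ estimate can produce regularity in the time direction; one must control $\partial_t{\bf w}$ through the equation. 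Moreover, your route to the lower-order coefficients via Lemma \ref{lem3.13} applied to ${\bf p}^{\kappa^{k}r,z_0}-{\bf p}^{\kappa^{k+1}r,z_0}$ is circular: verifying its hypothesis \eqref{est-Dp} for $|\beta|<m$ requires precisely the lower-order cases of \eqref{difference Du p0} that are being proved (in the paper, Lemma \ref{lem3.13} is invoked only in the subsequent lemma, after \eqref{difference Du p0} is available for all $|\alpha|\le m$). A smaller point: the justification ``$\Psi_0\le NC_0 r^{\delta'}$ by the $L_1$-bound on $D{\bf u}$'' does not hold; one rather uses $\|D^m{\bf u}\|_{L_\infty}\le NC_0$ (Lemma \ref{Dmubdd}), which suffices because the homogeneous decay exponent can be taken strictly larger than $\delta'$.

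The paper's proof runs the iteration directly at the level of ${\bf u}$ itself. With ${\bf u}={\bf u}_1+{\bf v}+{\bf w}$ as in \eqref{def_w}, it studies $F(\rho)=\inf_{{\bf p}\in\mathbb{P}^{z_0}}\fint_{Q_\rho(z_0)}|{\bf u}-{\bf u}_1-{\bf p}|^{1/2}$, and the one-step decay factor is $\kappa^{(m+1)/2}$, not $\kappa^{1/2}$. This stronger decay comes from Lemma \ref{lemma xn} used pointwise at $z_0$: choosing ${\bf p}_1\in\mathbb{P}^{z_0}$ so that $D^\beta({\bf w}-{\bf p}_1)(z_0)=0$ for $|\beta|\le m-1$ and $\widetilde D^\alpha{\bf p}_1=\widetilde D^\alpha{\bf w}(z_0)$, $\sum_{|\beta|=m}\bar A_{z_0}^{\bar\alpha\beta}D^\beta{\bf p}_1={\bf W}(z_0)$, the $\mathcal{C}^1$ bounds on $(\widetilde D^\alpha{\bf w},{\bf W})$ give \eqref{est-Dmw-pp1}, and --- the step your proposal has no substitute for --- the $L_\infty$ bound on ${\bf w}_t$ in \eqref{est-wpt-t11} controls the oscillation of ${\bf w}$ in $t$ over the interval of length $(\kappa r)^{2m}$, leading to \eqref{est-w-p1}; since $m+1>m+\delta'$, the iteration closes against the forcing $NC_0^{1/2}r^{(m+\delta')/2}$ supplied by \eqref{holder v 00}. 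The intermediate cases $1\le|\alpha|\le m$ are then deduced from the $\alpha=0$ case by applying the local estimate of Lemma \ref{lem loc lq} to $\tilde{\bf w}={\bf w}-{\bf p}_1$, which solves the homogeneous frozen-coefficient system, rather than by interpolation between endpoints. Your bookkeeping of where the exponents $\mu/(1+\mu)$ and $2m\gamma_0-1$ enter (through the estimate of ${\bf v}$, as in \eqref{est A} and \cite{dx2021}) is correct in spirit, but as it stands the proposal proves only the $|\alpha|=m$ portion of the lemma.
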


\begin{proof}
(1) For simplicity, we assume that $x_{0}=0$. Let ${\bf v}$ be the solution of  \eqref{sol-v}. Then similar to \eqref{holder v bar}, using \eqref{Dmv} with $\beta=0$, \eqref{bfF}, the same argument that led to \cite[(3.7)]{dx2019}, and Lemma \ref{Dmubdd}, we obtain
\begin{align}\label{holder v 00}
\left(\fint_{Q_{r/2}(z_{0})}|{\bf v}|^{1/2}\ dxdt\right)^{2}\leq Nr^{m+\delta'}\Big(\|D^m{\bf u}\|_{L_{\infty}(Q_{r}(z_{0}))}+\sum_{j=1}^{M}|{\bf f}_\alpha|_{\delta/2,\delta;\overline{\cQ}_{j}}\Big)\leq NC_{0}r^{m+\delta'}.
\end{align}
Define
\begin{align*}
{\bf p}_1(x)&={\bf w}(z_0)+\sum_{\substack{1\leq|\beta|\leq m\\\beta\neq me_d}}\frac{x_1^{\beta_1}\dots x_d^{\beta_d}}{\beta_1!\dots\beta_d!}D^\beta{\bf w}(z_{0})+\int_{0}^{x_{d}}\int_{0}^{s_{m-1}}\dots\int_{0}^{s_{1}}\Big(\bar{A}_{z_0}^{\bar\alpha\bar\alpha}(s)\Big)^{-1}\\
&\qquad \cdot\Big({\bf W}(z_{0})-\sum_{\substack{|\beta|=m\\\beta\neq me_d}}\bar{A}_{z_0}^{\bar\alpha\beta}(s)D^{\beta}{\bf w}(z_{0})\Big)\ dsds_1\dots ds_{m-1}\in \mathbb{P}^{z_0},
\end{align*}
where ${\bf w}$ and ${\bf W}$ are defined in \eqref{def_w} and \eqref{def_V-W}, respectively. 
Then a direct calculation yields 
$$\widetilde D^\alpha{\bf p}_1=\widetilde D^\alpha{\bf w}(z_0),\quad \sum_{|\beta|=m}\bar{A}_{z_0}^{\bar\alpha\beta}(x_d)D^\beta{\bf p}_1={\bf W}(z_0),$$
where $\widetilde{D}^\alpha{\bf w}$ is the collection of $D^\alpha{\bf w}$, $|\alpha|=m$ and $\alpha\neq me_d$. Since ${\bf w}$ satisfies \eqref{eq_w}, applying Lemma \ref{lemma xn} and the local estimate in Lemma \ref{lem loc lq}  with a suitable scaling, we have 
\begin{equation}\label{est-wpt-t}
\|{\bf w}_t\|_{L_\infty(Q_{\kappa r}(z_{0}))}
\leq N r^{-2m-2(d+2m)}\|{\bf w}\|_{L_{1/2}(Q_{r/2}(z_{0}))},
\end{equation}
$$
\|\widetilde D^\alpha({\bf w}-{\bf p}_1)\|_{L_\infty(Q_{\kappa r}(z_{0}))}
\leq N\kappa r^{-m-2(d+2m)}\|{\bf w}\|_{L_{1/2}(Q_{r/2}(z_{0}))},
$$
and
\begin{align*}
\big\|\sum_{|\beta|=m}\bar{A}_{z_0}^{\bar\alpha\beta}(x_d)D^\beta({\bf w}-{\bf p}_1)\big\|_{L_\infty(Q_{\kappa r}(z_{0}))}
\leq N\kappa r^{-m-2(d+2m)}\|{\bf w}\|_{L_{1/2}(Q_{r/2}(z_{0}))},
\end{align*}
where $\kappa\in(0,1/2)$. Thus, together with \eqref{ellipticity}, we derive 
\begin{equation}\label{estDmw-p1}
\|D^m({\bf w}-{\bf p}_1)\|_{L_\infty(Q_{\kappa r}(z_{0}))}
\le N\kappa r^{-m-2(d+2m)}\|{\bf w}\|_{L_{1/2}(Q_{r/2}(z_{0}))}.
\end{equation}
Note that ${\bf w}-{\bf p}$ satisfies the same equation as ${\bf w}$ for any ${\bf p}\in \mathbb{P}^{z_0}$, one can infer from \eqref{est-wpt-t} and \eqref{estDmw-p1} that 
\begin{equation}\label{est-wpt-t11}
\|{\bf w}_t\|_{L_\infty(Q_{\kappa r}(z_{0}))}
\leq N r^{-2m-2(d+2m)}\|{\bf w}-{\bf p}\|_{L_{1/2}(Q_{r/2}(z_{0}))},
\end{equation}
and
\begin{equation}\label{est-Dmw-pp1}
\|D^m({\bf w}-{\bf p}_1)\|_{L_\infty(Q_{\kappa r}(z_{0}))}\le N\kappa r^{-m-2(d+2m)}\|{\bf w}-{\bf p}\|_{L_{1/2}(Q_{r/2}(z_{0}))}.
\end{equation}
Since $D^\beta({\bf w}-{\bf p}_1)(z_0)=0$, $|\beta|\leq m-1$, by using \eqref{est-wpt-t11} and \eqref{est-Dmw-pp1}, we have
\begin{align}\label{est-w-p1}
&\fint_{Q_{\kappa r}(z_{0})}|{\bf w}-{\bf p}_1|^{1/2}
\le N\fint_{Q_{\kappa r}(z_{0})}(|{\bf w}-{\bf w}(t_0,x)|+|{\bf w}(t_0,x)-{\bf p}_1|^{1/2}\nonumber\\
&\leq N(\kappa r)^{m}\|{\bf w}_t\|^{1/2}_{L_\infty(Q_{\kappa r}(z_{0}))}
+N(\kappa r)^{m/2}\|D^m({\bf w}-{\bf p}_1)\|_{L_\infty(Q_{\kappa r}(z_{0}))}^{1/2}\leq N\kappa^{\frac{m+1}{2}}\fint_{Q_{r/2}(z_{0})}|{\bf w}-{\bf p}|^{1/2}.
\end{align}

Recalling ${\bf w}={\bf u}-{\bf u}_1-{\bf v}$ in \eqref{def_w}, using the triangle inequality,  \eqref{holder v 00}, and \eqref{est-w-p1}, we have for any ${\bf p}\in\mathbb{P}^{z_0}$,
\begin{align}\label{iterau-p}
&\fint_{Q_{\kappa r}(z_{0})}|{\bf u}-{\bf u}_1-{\bf p}_1|^{1/2}\ dz\nonumber\\
&\leq\fint_{Q_{\kappa r}(z_{0})}|{\bf w}-{\bf p}_1|^{1/2}\ dz+\fint_{Q_{\kappa r}(z_{0})}|{\bf v}|^{1/2}\ dz\nonumber\\
&\leq N\kappa^{\frac{m+1}{2}}\fint_{Q_{r/2}(z_{0})}|{\bf w}-{\bf p}|^{1/2}\ dz+NC_{0}^{1/2}\kappa^{-(d+2m)}r^{\frac{1}{2}(m+\delta')}
\nonumber\\
&\leq N\kappa^{\frac{m+1}{2}}\fint_{Q_{r/2}(z_{0})}|{\bf u}-{\bf u}_1-{\bf p}|^{1/2}\ dz+NC_{0}^{1/2}\kappa^{-(d+2m)}r^{\frac{1}{2}(m+\delta')},
\end{align}
where $C_{0}$ is defined in \eqref{def C0}. Denote
$$
F(r):=\inf_{{\bf p}\in\mathbb{P}^{z_0}}
\fint_{Q_{r}(z_{0})}|{\bf u}-{\bf u}_1-{\bf p}|^{1/2}\ dz.
$$
Then it follows from \eqref{iterau-p} that, for any  $r\in (0,1/4)$ and $\kappa\in(0,1/2)$,
\begin{align*}
F(\kappa r)\leq N\kappa^{\frac{m+1}{2}} F(r)+NC_{0}^{1/2}\kappa^{-(d+2m)}r^{\frac{1}{2}(m+\delta')}.
\end{align*}	
Using a well-known iteration argument (see, for instance, \cite[ Chapter V, Lemma 3.1]{g1983}), we have
\begin{align*}
F(r)\leq NC_{0}^{1/2}r^{\frac{1}{2}(m+\delta')}.
\end{align*}
Thus, there exists ${\bf p}^{r,z_0}$ in the form of \eqref{def-p} such that \eqref{difference Du p0} holds with $\alpha=0$. 

(2) By ${\bf p}^{r,z_{0}}$ as given in the above and using direct computations, we obtain
\begin{align}\label{Dmpr}
\sum_{|\beta|=m}\bar{A}_{z_0}^{\bar\alpha\beta}(x_d)D^{\beta}{\bf p}^{r,z_{0}}=
(-1)^m\bar{\bf f}_{\bar\alpha;z_0}(x_d)+\ell_{\bar\alpha}^{r,z_0},
\end{align}
and thus,
\begin{align*}
&({\bf u}-{\bf p}^{r,z_{0}})_t+(-1)^{m}\sum_{|\alpha|=|\beta|=m}D^{\alpha}(\bar{A}_{z_0}^{\alpha\beta}(x_d)D^{\beta}({\bf u}-{\bf p}^{r,z_{0}}))\\
&=(-1)^m\sum_{|\alpha|=|\beta|=m}D^\alpha\big((\bar{A}_{z_0}^{\alpha\beta}(x_d)-A^{\alpha\beta}(t,x))D^{\beta}{\bf u}\big)+\sum_{|\alpha|=m}D^\alpha({\bf f}_\alpha(t,x)-\bar{{\bf f}}_{\alpha;z_0}(x_d)).
\end{align*}
Denote $\tilde{\bf w}:={\bf w}-{\bf p}_1$, where ${\bf w}$ is defined in \eqref{def_w}. Then $\tilde{\bf w}$ satisfies
\begin{align*}
\tilde{\bf w}_t+(-1)^{m}\sum_{|\alpha|=|\beta|=m}D^{\alpha}(\bar{A}_{z_0}^{\alpha\beta}(x_d)D^{\beta}\tilde{\bf w})=0\quad\mbox{in}~Q_{r/2}(z_0).
\end{align*}
By using Lemma \ref{lem loc lq} with $p=2$, the triangle inequality, \eqref{difference Du p0} with $\alpha=0$, and \eqref{holder v 00}, we have for $1\leq|\alpha|\leq m$,
\begin{align*}
\|D^{\alpha}\tilde{\bf w}\|_{L_{2}(Q_{r/3}(z_{0}))}
&\leq Nr^{-|\alpha|+\frac{d+2m}{2}-2(d+2m)}\big(\|{\bf u}-{\bf p}^{r,z_{0}}\|_{L_{1/2}(Q_{r/2}(z_{0}))}+\|{\bf v}\|_{L_{1/2}(Q_{r/2}(z_{0}))}\big)\\
&\leq NC_{0}r^{m-|\alpha|+\delta'+\frac{d+2m}{2}}.
\end{align*}
Together with H\"{o}lder's inequality, we obtain
\begin{align}\label{holder w tilde}
\fint_{Q_{r/3}(z_{0})}|D^{\alpha}\tilde{\bf w}|^{1/2}\leq\left(\fint_{Q_{r/3}(z_{0})}|D^{\alpha}\tilde{\bf w}|^{2}\right)^{\frac{1}{4}}\leq NC_{0}^{1/2}r^{(m-|\alpha|+\delta')/2}.
\end{align}
Moreover, similar to \eqref{holder v 00}, we have 
\begin{align*}
\left(\fint_{Q_{r/2}(z_{0})}|D^{\alpha}{\bf v}|^{1/2}\ dxdt\right)^{2}\leq NC_{0}r^{m-|\alpha|+\delta'},
\end{align*}
where $1\leq|\alpha|\leq m$. 
This, in combination with the triangle inequality and \eqref{holder w tilde}, yields \eqref{difference Du p0} with $1\leq|\alpha|\leq m$.
The lemma is proved.
\end{proof}

\begin{lemma}
For any $z_0\in Q_{1/8}$ and $r\in(0,1/4)$, we have 
\begin{equation}\label{limit lbeta}
D^{\beta}{\bf u}(z_0)=\ell_\beta^{z_0},\quad |D^{\beta}{\bf u}(z_0)-\ell_\beta^{r,z_0}|\le NC_0 r^{m-|\beta|+\delta'},\quad|\beta|\leq m,~|\beta|\neq me_d,
\end{equation}
and
\begin{align}\label{limit ld}
{\bf U}(z_0)=\ell_{\bar\alpha}^{z_0},\quad |{\bf U}(z_0)-\ell_{\bar\alpha}^{r,z_0}|\le NC_0 r^{\delta'},\quad\bar\alpha=me_d,
\end{align}
where $\ell_\beta^{r,z_0}$ are defined in \eqref{def-p}, $\ell_\beta^{z_0}$ are the limits of $\ell_\beta^{r,z_0}$ as $r\rightarrow0$, $|\beta|\leq m$, ${\bf U}$ is defined in \eqref{defU} with $|\beta|=m$, the constant $N$ depends on $d$, $n$, $m$, $\nu$, and $\delta'$.
\end{lemma}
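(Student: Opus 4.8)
The plan is to use the approximation estimate \eqref{difference Du p0} across dyadic scales: the coefficient vectors $\ell_\beta^{r,z_0}$ should form a Cauchy family as $r\to0^+$, and their limits should be exactly $D^\beta{\bf u}(z_0)$ (for $\beta\neq me_d$) and ${\bf U}(z_0)$. The first thing to record is that the piecewise constants $\bar A_{z_0}^{\alpha\beta}(x_d)$, the function $\bar{\bf f}_{\bar\alpha;z_0}(x_d)$, and hence ${\bf u}_1$, depend only on $z_0$ and not on $r$; consequently, for any two radii $\rho,r$ the difference ${\bf p}^{r,z_0}-{\bf p}^{\rho,z_0}$ is again of the form described in $\mathbb{P}^{z_0}$, with coefficients $\ell_\beta^{r,z_0}-\ell_\beta^{\rho,z_0}$.

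First I would carry out the dyadic comparison. For $r\in(0,1/8)$ one has $Q_r(z_0)\subset Q_{2r}(z_0)$ with $|Q_{2r}(z_0)|=2^{d+2m}|Q_r(z_0)|$, so combining \eqref{difference Du p0} at the two scales with the quasi-triangle inequality $\int|f+g|^{1/2}\le\int|f|^{1/2}+\int|g|^{1/2}$ gives, for every $|\beta|\le m$,
\[
\Big(\fint_{Q_r(z_0)}|D^\beta({\bf p}^{r,z_0}-{\bf p}^{2r,z_0})|^{1/2}\,dz\Big)^2\le NC_0\,r^{m-|\beta|+\delta'}.
\]
The integrand is independent of $t$, so the average over $Q_r(z_0)$ equals the average over $B_r(x_0)$, and Lemma \ref{lem3.13} applied with $q=1/2$ to ${\bf p}^{r,z_0}-{\bf p}^{2r,z_0}\in\mathbb{P}^{z_0}$ yields $|\ell_\beta^{r,z_0}-\ell_\beta^{2r,z_0}|\le NC_0\,r^{m-|\beta|+\delta'}$. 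Since $m-|\beta|+\delta'\ge\delta'>0$, the geometric series $\sum_j(2^{-j}r)^{m-|\beta|+\delta'}$ converges, so $\ell_\beta^{z_0}:=\lim_{r\to0^+}\ell_\beta^{r,z_0}$ exists and $|\ell_\beta^{z_0}-\ell_\beta^{r,z_0}|\le NC_0\,r^{m-|\beta|+\delta'}$ for every $r\in(0,1/4)$, the finitely many top scales $r\in[1/8,1/4)$ being absorbed by the crude bound $|\ell_\beta^{r,z_0}|\le NC_0$ (immediate from \eqref{difference Du p0}, interior estimates for ${\bf p}^{r,z_0}$, and Lemma \ref{Dmubdd}).

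It then remains to identify the limits. For $|\beta|\le m$, $\beta\neq me_d$, a direct differentiation of \eqref{def-p} shows $D^\beta{\bf p}^{r,z_0}(x_0)=\ell_\beta^{r,z_0}$ — the polynomial part contributes $\ell_\beta^{r,z_0}$, while the iterated $x_d$-integral, together with its $x_d$-derivatives of order $<m$, vanishes at $x_{0,d}$, and $D^\beta$ of it is identically zero when $|\beta|=m$, $\beta\neq me_d$ — so by \eqref{difference Du p0} and the bound just obtained,
\[
\fint_{Q_r(z_0)}|D^\beta{\bf u}-\ell_\beta^{z_0}|^{1/2}\,dz\le NC_0^{1/2}\,r^{(m-|\beta|+\delta')/2}\to0\qquad(r\to0^+);
\]
since $D^\beta{\bf u}$ is continuous at $z_0$ relative to $\overline{\cQ}_{j_0}$ (Theorem \ref{mainthm} for $|\beta|=m$, $\beta\neq me_d$, and the local boundedness of $D^m{\bf u}$ from Proposition \ref{main prop} for $|\beta|\le m-1$), this forces $\ell_\beta^{z_0}=D^\beta{\bf u}(z_0)$, which is \eqref{limit lbeta}. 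For $\bar\alpha=me_d$ I would instead invoke the conormal identity \eqref{Dmpr}, $\ell_{\bar\alpha}^{r,z_0}=\sum_{|\beta|=m}\bar A_{z_0}^{\bar\alpha\beta}(x_d)D^\beta{\bf p}^{r,z_0}-(-1)^m\bar{\bf f}_{\bar\alpha;z_0}(x_d)$: subtracting it from ${\bf U}=\sum_{|\beta|=m}A^{\bar\alpha\beta}D^\beta{\bf u}-(-1)^m{\bf f}_{\bar\alpha}$ and writing the difference as $\sum_{|\beta|=m}\bar A_{z_0}^{\bar\alpha\beta}(D^\beta{\bf u}-D^\beta{\bf p}^{r,z_0})+\sum_{|\beta|=m}(A^{\bar\alpha\beta}-\bar A_{z_0}^{\bar\alpha\beta})D^\beta{\bf u}+(-1)^{m+1}({\bf f}_{\bar\alpha}-\bar{\bf f}_{\bar\alpha;z_0})$, the first sum is controlled by \eqref{difference Du p0} with $|\beta|=m$, while the last two are controlled by $\|D^m{\bf u}\|_{L_\infty(Q_r(z_0))}\le NC_0$ together with the piecewise-H\"older, geometry-corrected mean-oscillation bounds $\fint_{Q_r(z_0)}|A^{\bar\alpha\beta}(t,x)-\bar A_{z_0}^{\bar\alpha\beta}(x_d)|\,dz\le Nr^{\delta'}$ and $\fint_{Q_r(z_0)}|{\bf f}_{\bar\alpha}(t,x)-\bar{\bf f}_{\bar\alpha;z_0}(x_d)|\,dz\le NC_0r^{\delta'}$ (of the same type as \eqref{est A}). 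This gives $\big(\fint_{Q_r(z_0)}|{\bf U}-\ell_{\bar\alpha}^{r,z_0}|^{1/2}\,dz\big)^2\le NC_0 r^{\delta'}$, and combined with $|\ell_{\bar\alpha}^{z_0}-\ell_{\bar\alpha}^{r,z_0}|\le NC_0 r^{\delta'}$ and the continuity of ${\bf U}$ relative to $\overline{\cQ}_{j_0}$ one gets ${\bf U}(z_0)=\ell_{\bar\alpha}^{z_0}$ and \eqref{limit ld}.

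The hard part will be the $me_d$ component: unlike the other directions there is no single coefficient of ${\bf p}^{r,z_0}$ equal to $D_d^m{\bf p}^{r,z_0}(x_0)$, so ${\bf U}(z_0)$ has to be extracted through the conormal combination \eqref{Dmpr}, and controlling the error terms requires the mean-oscillation estimates for $A^{\bar\alpha\beta}$ and ${\bf f}_{\bar\alpha}$ at rate $r^{\delta'}$ with $\delta'=\min\{\delta,\mu/(1+\mu),2m\gamma_0-1\}$ — which is exactly where the $C^{1,\mu}$-in-$x$ and $C^{\gamma_0}$-in-$t$ regularity of the interfaces $\partial_p\cQ_j$ enters — all the while keeping the constants linear in $C_0$.
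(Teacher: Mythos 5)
Your argument is essentially the paper's own proof: compare ${\bf p}^{r,z_0}$ with ${\bf p}^{2r,z_0}$ via \eqref{difference Du p0} and the quasi-triangle inequality for the $1/2$-power, observe that the difference again lies in $\mathbb{P}^{z_0}$ because $\bar A_{z_0}^{\alpha\beta}$ and $\bar{\bf f}_{\bar\alpha;z_0}$ do not depend on $r$, apply Lemma \ref{lem3.13} to obtain $|\ell_\beta^{r,z_0}-\ell_\beta^{2r,z_0}|\le NC_0r^{m-|\beta|+\delta'}$, sum the dyadic series, and identify the limits through the continuity of $D^\beta{\bf u}$, $\widetilde D^\alpha{\bf u}$, ${\bf U}$ together with the conormal identity \eqref{Dmpr} and mean-oscillation bounds of the type \eqref{est A} for the $me_d$ component, exactly as in the text. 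One caveat: for $|\beta|\le m-1$ your displayed estimate $\fint_{Q_r(z_0)}|D^\beta{\bf u}-\ell_\beta^{z_0}|^{1/2}\,dz\le NC_0^{1/2}r^{(m-|\beta|+\delta')/2}$ overstates the rate (the variation of $D^\beta{\bf p}^{r,z_0}$ over $Q_r(z_0)$, which your justification via the center value $D^\beta{\bf p}^{r,z_0}(x_0)=\ell_\beta^{r,z_0}$ ignores, already contributes terms of order $r$), but only the fact that this quantity tends to zero is needed to identify $\ell_\beta^{z_0}=D^\beta{\bf u}(z_0)$, and the quantitative bound in \eqref{limit lbeta} comes from the telescoped dyadic estimate, so the conclusion is unaffected; likewise, your separate treatment of the scales $r\in[1/8,1/4)$ is unnecessary, since the telescoping from any $r\in(0,1/4)$ only ever compares radii $2^{-j}r<1/8$ with their doubles.
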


\begin{proof}
For simplicity, we assume $x_0=0$. It follows from the triangle inequality and \eqref{difference Du p0} that
\begin{align*}
\fint_{Q_{r}(z_{0})}|D^\beta({\bf p}^{r,z_0}-{\bf p}^{2r,z_{0}})|^{1/2}
&\le \fint_{Q_{r}(z_{0})}|D^\beta({\bf u}-{\bf p}^{r,z_0})|^{1/2}+|D^\beta({\bf u}-{\bf p}^{2r,z_0})|^{1/2}\\
&\leq NC_{0}^{1/2}r^{(m-|\beta|+\delta')/2},
\end{align*}
where $|\beta|\leq m$.  From \eqref{def-p}, we have 
\begin{align*}
{\bf p}^{r,z_{0}}-{\bf p}^{2r,z_{0}}
&=\ell_0^{r,z_0}-\ell_0^{2r,z_0}+(-1)^m\sum_{\substack{1\leq|\beta|\leq m\\
\beta\neq me_d}}\frac{x_1^{\beta_1}\cdots x_d^{\beta_d}}{\beta_1!\cdots\beta_d!}(\ell_\beta^{r,z_0}-\ell_\beta^{2r,z_0})\nonumber\\
&+(-1)^m\int_{0}^{x_{d}}\int_{0}^{s_{m-1}}\dots\int_{0}^{s_{1}}\Big(\bar{A}_{z_0}^{\bar\alpha\bar\alpha}(s)\Big)^{-1}
\Big(\ell_{\bar\alpha}^{r,z_0}-\ell_{\bar\alpha}^{2r,z_0}\nonumber\\
&-\sum_{|\beta|=m,\beta\neq me_d}
\bar{A}_{z_0}^{\bar\alpha\beta}(s)(\ell_{\beta}^{r,z_0}-\ell_{\beta}^{2r,z_0})\Big)\ dsds_1\dots ds_{m-1}\in\mathbb{P}^{z_0}.
\end{align*}
Applying Lemma \ref{lem3.13}, we have 
\begin{equation}\label{difference ell}
|\ell^{r,z_0}_\beta-\ell^{2r,z_0}_\beta|\le NC_0 r^{m-|\beta|+\delta'},\ |\beta|\leq m.
\end{equation}
This implies that the limits of $\ell^{r,z_0}_\beta$ exist and are denoted by $\ell_\beta^{z_0}$ for $|\beta|\leq m$.
Combining \eqref{difference Du p0} and the continuity of $D^\beta{\bf u}$, $|\beta|\leq m-1$, one can get \eqref{limit lbeta}  for $|\beta|\leq m-1$.

Next we prove \eqref{limit lbeta} for $|\beta|=m$ and \eqref{limit ld}. From \eqref{difference Du p0}, one has that
\begin{align}\label{difference Du p}
\fint_{Q_{r/2}(z_{0})}|\sum_{|\beta|=m}\bar{A}^{\bar\alpha\beta}D^{\beta}({\bf u}-{\bf p}^{r,z_{0}})|^{1/2}\leq NC_{0}^{1/2}r^{\delta'/2}.
\end{align}
In view of the definition of ${\bf U}$ in \eqref{defU} with $|\beta|=m$, \eqref{Dmpr}, the triangle inequality, \eqref{difference Du p}, and \cite[Lemma 5.1]{dx2021}, we have 
\begin{align}\label{difference Du U}
&\fint_{Q_{r/3}(z_{0})}|{\bf U}-\ell_{\bar\alpha}^{r,z_{0}}|^{1/2}\nonumber\\
&\leq \fint_{Q_{r/3}(z_{0})}\big|\sum_{|\beta|=m}(A^{\bar\alpha\beta}D^{\beta}{\bf u}-\bar{A}_{z_0}^{\bar\alpha\beta}(x_d)D^{\beta}{\bf p}^{r,z_{0}})+(-1)^m{\bf f}_{\bar\alpha}-(-1)^m\bar{\bf f}_{\bar\alpha;z_0}(x_d)\big|^{1/2}\nonumber\\
&\leq NC_{0}^{1/2}r^{\delta'/2}.
\end{align}
Therefore, the desired estimates follow from \eqref{difference ell}, \eqref{difference Du U},  and the continuity of $\widetilde D^{\beta}{\bf u}$ and ${\bf U}$ in \eqref{Dmuholder}.
\end{proof}

Now we are ready to complete the proof of the estimate of $\langle{\bf u}\rangle_{\frac{m+\delta'}{2m}}$ in \eqref{edt-ut}.

\begin{proof}[Proof of the estimate of $\langle{\bf u}\rangle_{\frac{m+\delta'}{2m}}$ in \eqref{edt-ut}]
For any point $z_0=(t_0,x_0), z_1=(t_1,x_0)\in Q_{3/4}$, denote $r:=|t_0-t_1|^{\frac{1}{2m}}$. If $r>1/32$, then by \eqref{estholderu}, we have 
\begin{equation*}
|{\bf u}(z_{0})-{\bf u}(z_{1})|\leq 2\|{\bf u}\|_{L_\infty(Q_{3/4})}\leq N |t_0-t_1|^{\frac{m+\delta'}{2m}}\big(\|{\bf u}\|_{L_{p}(Q_{1})}
+\sum_{|\alpha|\leq m}\|{\bf f}_\alpha\|_{L_{\infty}(Q_{1})}\big).
\end{equation*}
Next we consider the case of $r\leq 1/32$. Without loss of generality, we may assume $x_0=0$ and $z_1=(t_0-r^{2m},0)$ are in the same subdomain. 
Using the triangle inequality, we have 
\begin{equation}\label{estimate u}
|{\bf u}(z_{0})-{\bf u}(z_{1})|^{1/2}\leq|{\bf u}(z_{0})-\ell_0^{2r,z_0}|^{1/2}+|\ell_0^{2r,z_0}-\ell_0^{2r,z_1}|^{1/2}+|{\bf u}(z_{1})-\ell_0^{2r,z_1}|^{1/2}.
\end{equation}
We will use $x$ and $y$ to denote the same point under the coordinate system associated with $z_0$ and $z_1$, respectively. Moreover, it follows from \cite[Lemma 3.3]{dx2021} with $\omega_1(r)$ replaced by $r^{\delta'}$ that
\begin{equation}\label{difference x tilde x}
|x-y|\leq Nr^{1+\delta'}.
\end{equation}
To estimate $|\ell_0^{2r,z_0}-\ell_0^{2r,z_1}|$, we first use the definition of ${\bf p}^{r,z_0}$ in \eqref{def-p} with  $x_0=0$ to obtain
\begin{align}\label{uz0-z1}
&|\ell_0^{2r,z_0}-\ell_0^{2r,z_1}|^{1/2}\nonumber\\
&\leq|{\bf p}^{2r,z_0}-{\bf p}^{2r,z_1}|^{1/2}+\Big|\sum_{\substack{1\leq|\beta|\leq m\\
\beta\neq me_d}}\frac{1}{\beta_1!\cdots\beta_d!}(\ell_\beta^{2r,z_0}x_1^{\beta_1}\cdots x_d^{\beta_d}-\ell_\beta^{2r,z_1}y_1^{\beta_1}\cdots y_d^{\beta_d})\Big|^{1/2}\nonumber\\
&\quad+\Bigg|\int_{0}^{x_{d}}\int_{0}^{s_{m-1}}\dots\int_{0}^{s_{1}}\Big(\bar{A}_{z_{0}}^{\bar\alpha\bar\alpha}(s)\Big)^{-1}
\Big((-1)^m\bar{\bf f}_{\bar\alpha;z_{0}}(s)+\ell_{\bar\alpha}^{2r,z_0}\nonumber\\
&\qquad-\sum_{|\beta|=m,\beta\neq me_d}
\bar{A}_{z_{0}}^{\bar\alpha\beta}(s)\ell_{\beta}^{2r,z_0}\Big)\ dsds_1\dots ds_{m-1}\nonumber\\
&\qquad-\int_{0}^{y_{d}}\int_{0}^{s_{m-1}}\dots\int_{0}^{ s_{1}}\Big(\bar{A}_{z_{1}}^{\bar\alpha\bar\alpha}(s)\Big)^{-1}
\Big((-1)^m\bar{\bf f}_{\bar\alpha;z_{1}}(s)+\ell_{\bar\alpha}^{2r,z_1}\nonumber\\
&\qquad-\sum_{|\beta|=m,\beta\neq me_d}
\bar{A}_{z_{1}}^{\bar\alpha\beta}(s)\ell_{\beta}^{2r,z_1}\Big)\ dsds_1\dots d s_{m-1}\Bigg|^{1/2},
\end{align}
where we used  $g_{_{z_0}}$ and $g_{_{z_1}}$ to denote the same function $g$ in the coordinate system associated with $z_0$ and $z_1$, respectively. We are going to estimate the three terms on the right-hand side of \eqref{uz0-z1} one by one. 

In view of the triangle inequality and \eqref{difference Du p0}, we have $Q_{r}(z_{1})\subset Q_{2r}(z_{0})$ and 
\begin{align}\label{est first}
\fint_{Q_{r}(z_{1})}|{\bf p}^{2r,z_0}-{\bf p}^{2r,z_1}|^{1/2}&\leq\fint_{Q_{r}(z_{1})}|{\bf u}(z)-{\bf p}^{2r,z_0}|^{1/2}+\fint_{Q_{r}(z_{1})}|{\bf u}(z)-{\bf p}^{2r,z_1}|^{1/2}\nonumber\\
&\leq\fint_{Q_{2r}(z_{0})}|{\bf u}(z)-{\bf p}^{2r,z_0}|^{1/2}+NC_{0}^{1/2}r^{(m+\delta')/2}\leq NC_{0}^{1/2}r^{(m+\delta')/2}.
\end{align}
Under the coordinate system associated with $z_{0}$, by using the triangle inequality, Lemma \ref{lem3.13}, \eqref{difference x tilde x}, \eqref{limit lbeta}, and \eqref{Dmuholder}, we have for $1\leq|\beta|\leq m$ and $\beta\neq me_d$,
\begin{align}\label{est second}
&|\ell_\beta^{2r,z_0}x_1^{\beta_1}\cdots x_d^{\beta_d}-\ell_\beta^{2r,z_1}y_1^{\beta_1}\cdots y_d^{\beta_d}|\nonumber\\
&\leq|(\ell_\beta^{2r,z_{0}}-\ell_\beta^{2r,z_{1}})x_1^{\beta_1}\cdots x_d^{\beta_d}+\ell_\beta^{2r,z_{1}} (x_1^{\beta_1}\cdots x_d^{\beta_d}-y_1^{\beta_1}\cdots y_d^{\beta_d})|\nonumber\\
&\leq r^{|\beta|}\Big(|\ell_\beta^{2r,z_{0}}-D^{\beta}{\bf u}(z_{0})|+|D^{\beta}{\bf u}(z_{0})-D^{\beta}{\bf u}(z_{1})|+|D^{\beta}{\bf u}(z_{1})-D^{y_\beta}{\bf u}(z_{1})|\nonumber\\
&\quad+|D^{y_{\beta}}{\bf u}(z_{1})-\ell_\beta^{2r,z_{1}}|\Big)+NC_{0}r^{m+\delta'}\leq NC_{0}r^{m+\delta'},
\end{align}
where $D^{y_\beta}$ denotes the derivatives under the coordinate system associated with $z_1$, \eqref{limit lbeta} and \eqref{limit ld} hold true with $z_0$ replaced by $z_1$. For the third term in \eqref{uz0-z1},  note that 
\begin{align}\label{est third1}
&\Big|\int_{0}^{x_{d}}\int_{0}^{s_{m-1}}\dots\int_{0}^{s_{1}}\big(\bar{A}_{z_{0}}^{\bar\alpha\bar\alpha}(s)\big)^{-1}\bar{\bf f}_{\bar\alpha;z_{0}}(s)-\int_{0}^{y_{d}}\int_{0}^{ s_{m-1}}\dots\int_{0}^{s_{1}}\big(\bar{A}_{z_{1}}^{\bar\alpha\bar\alpha}(s)\big)^{-1}
\bar{\bf f}_{\bar\alpha;z_{1}}(s)\Big|\nonumber\\
&\leq\Big|\int_{0}^{x_{d}}\int_{0}^{s_{m-1}}\dots\int_{0}^{s_{1}}\big(\bar{A}_{z_{0}}^{\bar\alpha\bar\alpha}(s)\big)^{-1}(\bar{\bf f}_{\bar\alpha;z_{0}}(s)-\bar{\bf f}_{\bar\alpha;z_{1}}(s))\Big|\nonumber\\
&\quad+\Big|\int_{0}^{x_{d}}\int_{0}^{s_{m-1}}\dots\int_{0}^{s_{1}}\big((\bar{A}_{z_{0}}^{\bar\alpha\bar\alpha}(s))^{-1}-(\bar{A}_{z_{1}}^{\bar\alpha\bar\alpha}(s))^{-1}\big)\bar{\bf f}_{\bar\alpha;z_{1}}(s)\Big|\nonumber\\
&\quad+\Big|\int_{y_{d}}^{x_{d}}\int_{0}^{s_{m-1}}\dots\int_{0}^{s_{1}}\big|(\bar{A}_{z_{1}}^{\bar\alpha\bar\alpha}(s))^{-1}
\bar{\bf f}_{\bar\alpha;z_{1}}(s)\big|\ dsds_1\dots ds_{m-1}\Big|.
\end{align}	
By using the triangle inequality, \eqref{ellipticity}, \eqref{difference x tilde x}, the assumptions of $\bar{\bf f}_{\bar\alpha}$ and  $h_j$ defined in Page \pageref{systemmain}, and $z_0$ and $z_1$ are in the same subdomain,  we have
\begin{align*}
\Big|\int_{0}^{x_{d}}\int_{0}^{s_{m-1}}\dots\int_{0}^{s_{1}}\big(\bar{A}_{z_{0}}^{\bar\alpha\bar\alpha}(s)\big)^{-1}(\bar{\bf f}_{\bar\alpha;z_{0}}(s)-\bar{\bf f}_{\bar\alpha;z_{1}}(s))\Big|\leq NC_{0}r^{m+\delta'},
\end{align*}
which is also true for the last two terms on the right-hand side of \eqref{est third1}.
Similarly,  by the triangle inequality, \eqref{limit lbeta}, and \eqref{limit ld}, we can estimate the rest of terms, and we conclude that the third term on the right-hand side of \eqref{uz0-z1} is bounded by $NC_{0}^{1/2}r^{(m+\delta')/2}$. Then by taking the average over $z\in Q_{r}(z_{1})$ and taking the $1/2$-th root in \eqref{estimate u}, combining \eqref{est first} and \eqref{est second},  we have
\begin{equation*}
|{\bf u}(z_0)-{\bf u}(z_1)|\leq NC_0r^{m+\delta'}.
\end{equation*}
Therefore, we finish the proof of \eqref{edt-ut}.
\end{proof}

\appendix

\section{}\label{Append}
In this section, we shall show that we only need to consider the parabolic systems with leading order terms by moving all the lower-order terms to the right-hand side of \eqref{systems}. To this end, let us first present the following two auxiliary results in Lemmas \ref{lemlocal} and \ref{lem loc lq}.

From Lemma \ref{solvability}, by a similar localization argument that led to \cite[Lemma 4]{d2012}, we obtain the following local estimate.
\begin{lemma}\label{lemlocal}
Let $p\in(1,\infty)$ and ${\bf f}_{\alpha}\in L_{p}(Q_{1})$. Assume $A^{\alpha\beta}$ satisfies \eqref{BMO} with a sufficiently small constant $\gamma_{0}=\gamma_{0}(d,n,m,p,\nu)\in (0,1/2)$ and ${\bf u}\in \mathcal{H}_{p,\text{loc}}^{m}$ satisfies \eqref{systems} in $Q_{1}$. Then there exists a constant $N=N(n,d,m,p,\nu,\Lambda,r_0)$ such that 
$$\|{\bf u}\|_{\mathcal{H}_{p}^{m}(Q_{1/2})}\leq N\big(\|{\bf u}\|_{L_{p}(Q_{1})}+\sum_{|\alpha|\leq m}\|{\bf f}_{\alpha}\|_{L_{p}(Q_{1})}\big).$$
\end{lemma}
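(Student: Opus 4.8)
The final statement to prove is Lemma \ref{lemlocal}, the interior $\mathcal{H}^m_p$-estimate for solutions of \eqref{systems} with coefficients satisfying the partially small BMO condition \eqref{BMO}.

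\bigskip

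The plan is to deduce the local estimate from the global solvability result in Lemma \ref{solvability} via a standard cutoff-and-iterate argument. First I would fix $1/2 \le r < R \le 1$ and choose a smooth cutoff function $\zeta$ supported in $Q_R$ with $\zeta \equiv 1$ on $Q_r$, with the usual bounds $|D^k \zeta| \le N(R-r)^{-k}$ for $0 \le k \le 2m$ and $|\partial_t \zeta| \le N(R-r)^{-2m}$; one must be slightly careful to choose the cutoff so that it vanishes near the lateral and bottom parabolic boundary but equals one up to the top $\{t=0\}$, matching the structure of $\mathring{\mathcal{H}}^m_p$. Then $\zeta {\bf u}$ satisfies an equation of the form ${(\zeta{\bf u})}_t + (-1)^m \tilde{\mathcal L}(\zeta{\bf u}) = \sum_{|\alpha|\le m} D^\alpha {\bf g}_\alpha$ in $Q_1$ with zero initial data, where $\tilde{\mathcal L}$ is the operator built from $\tilde A^{\alpha\beta}$ as in \eqref{def-tilde-A} (so that the global result Lemma \ref{solvability} applies verbatim on $Q_1$), and the new right-hand side ${\bf g}_\alpha$ consists of ${\bf f}_\alpha \zeta$ together with commutator terms coming from the Leibniz rule: terms in which at least one derivative falls on $\zeta$, hence involving $D^j \zeta \cdot D^{k}{\bf u}$ with $j \ge 1$ and $k \le m$, plus the $\partial_t\zeta \cdot {\bf u}$ term which, after writing $\partial_t \zeta\,{\bf u}$ as itself (an $L_p$ function, i.e. the $\alpha = 0$ component), contributes to ${\bf g}_0$.

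\bigskip

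Applying Lemma \ref{solvability} to $\zeta{\bf u}$ gives
\[
\|{\bf u}\|_{\mathcal{H}^m_p(Q_r)} \le \|\zeta{\bf u}\|_{\mathcal{H}^m_p(Q_1)} \le N\sum_{|\alpha|\le m}\|{\bf g}_\alpha\|_{L_p(Q_1)} \le N\sum_{|\alpha|\le m}\|{\bf f}_\alpha\|_{L_p(Q_1)} + N\sum_{k=0}^{m}(R-r)^{k-m}\|D^k{\bf u}\|_{L_p(Q_R)},
\]
where I have collected the worst power of $(R-r)$ appearing (the term $\partial_t\zeta\,{\bf u}$ with weight $(R-r)^{-2m}$ must be handled; one absorbs it by noting $(R-r)^{-2m}\|{\bf u}\|_{L_p} \le (R-r)^{-m}\|{\bf u}\|_{L_p}$ after also using $R - r \le 1$, and then $\|{\bf u}\|_{L_p(Q_R)}$ is the $k=0$ term). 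Next I would invoke the standard interpolation inequality for intermediate derivatives, $\|D^k{\bf u}\|_{L_p(Q_R)} \le \epsilon\|D^m{\bf u}\|_{L_p(Q_R)} + N\epsilon^{-k/(m-k)}\|{\bf u}\|_{L_p(Q_R)}$ for $1 \le k \le m-1$, to reduce everything to $\|D^m{\bf u}\|_{L_p}$ and $\|{\bf u}\|_{L_p}$ on $Q_R$. This yields, for a suitable constant,
\[
\|D^m{\bf u}\|_{L_p(Q_r)} \le \tfrac12\,[D^m{\bf u}]_{R} \cdot \text{(something)} + \dots
\]
more precisely an inequality of the form $\theta(r) \le \tfrac12\,\theta(R) + N(R-r)^{-m}\big(\|{\bf u}\|_{L_p(Q_1)} + \sum\|{\bf f}_\alpha\|_{L_p(Q_1)}\big)$, where $\theta(\rho) := \|D^m{\bf u}\|_{L_p(Q_\rho)}$ (after multiplying the whole inequality by an appropriate power and taking suprema, or directly by absorbing the $\|D^m{\bf u}\|_{L_p(Q_R)}$ term — which requires the standard filling-the-hole iteration lemma, e.g.\ \cite[Chapter V, Lemma 3.1]{g1983}, since $D^m{\bf u}$ on $Q_R$ appears on the right). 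Finally, the iteration lemma applied over the family $\{Q_\rho\}_{1/2\le \rho\le 1}$ removes the $\tfrac12\theta(R)$ term and gives $\|D^m{\bf u}\|_{L_p(Q_{1/2})} \le N\big(\|{\bf u}\|_{L_p(Q_1)} + \sum_{|\alpha|\le m}\|{\bf f}_\alpha\|_{L_p(Q_1)}\big)$; combined with the interpolation bounds for $D^k{\bf u}$, $0\le k \le m-1$, and the equation itself to estimate ${\bf u}_t$ in $\mathbb{H}^{-m}_p$, this produces the full $\mathcal{H}^m_p(Q_{1/2})$ bound.

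\bigskip

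The main technical obstacle is bookkeeping rather than conceptual: one must verify that replacing $A^{\alpha\beta}$ by $\tilde A^{\alpha\beta}$ in the cutoff region is harmless — this works because $\zeta$ is supported where $\eta \equiv 1$ in the construction of $\tilde A^{\alpha\beta}$, so $\tilde{\mathcal L}(\zeta{\bf u}) = \mathcal L(\zeta{\bf u})$ on the support of $\zeta$ and the difference only contributes to the commutator terms, which are already of lower order in ${\bf u}$ — and that all commutator terms genuinely involve only $D^k {\bf u}$ with $k \le m$ (no $D^m$ paired with a nontrivial derivative of $\zeta$ of total order exceeding what interpolation can absorb; in fact the leading commutator is $D^{m-1}{\bf u} \cdot D\zeta$-type after one integration, but since we only need $L_p$-norms of the ${\bf g}_\alpha$ and not their derivatives, the raw Leibniz expansion with $D^\alpha(\tilde A^{\alpha\beta}D^\beta(\zeta {\bf u})) - \zeta D^\alpha(\tilde A^{\alpha\beta} D^\beta {\bf u})$ written out directly suffices — all terms there contain $D^j\zeta$, $j\ge1$, times $D^k {\bf u}$, $k \le m$, possibly with more undifferentiated $D^\alpha$'s left outside, which is fine since ${\bf g}_\alpha$ sits inside a $D^\alpha$). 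Since \cite[Lemma 4]{d2012} carries out precisely this localization for the analogous elliptic/parabolic setting, the argument is by now routine and I would simply cite it after setting up the reduction to Lemma \ref{solvability}.
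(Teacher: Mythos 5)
Your overall route (cut off, apply Lemma \ref{solvability} to $\zeta{\bf u}$, then iterate) is the same one the paper has in mind — its proof is literally a citation of the localization argument behind \cite[Lemma 4]{d2012} — but as written your argument has a genuine gap at the absorption step. The commutator produced by the cutoff is not harmless: expanding $D^{\alpha}\big(\zeta\,\tilde A^{\alpha\beta}D^{\beta}{\bf u}\big)-\zeta D^{\alpha}\big(\tilde A^{\alpha\beta}D^{\beta}{\bf u}\big)$ yields terms of the form $D^{\mu}\big(D^{\nu}\zeta\,\tilde A^{\alpha\beta}D^{\beta}{\bf u}\big)$ with $|\nu|\ge 1$, $|\mu|\le m-1$ and $|\beta|=m$, i.e.\ the \emph{top-order} derivative $D^m{\bf u}$ survives in the new data ${\bf g}_\mu$ (you acknowledge $k\le m$). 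Since $\tilde A^{\alpha\beta}$ is only measurable, no further integration by parts can lower this to $D^{m-1}{\bf u}$, and interpolation only treats $1\le k\le m-1$. Feeding this into Lemma \ref{solvability} therefore gives
\begin{equation*}
\|D^m{\bf u}\|_{L_p(Q_r)}\le N(R-r)^{-m}\|D^m{\bf u}\|_{L_p(Q_R)}+N(R-r)^{-2m}\Big(\|{\bf u}\|_{L_p(Q_1)}+\sum_{|\alpha|\le m}\|{\bf f}_\alpha\|_{L_p(Q_1)}\Big),
\end{equation*}
where $N$ is the (large) global-solvability constant. There is no mechanism in your argument producing the coefficient $\tfrac12$ you assert in front of $\theta(R)$: the factor is $N(R-r)^{-m}>1$, so the iteration lemma \cite[Ch.~V, Lemma 3.1]{g1983} does not apply; hole-filling only replaces it by $N(R-r)^{-pm}/\big(1+N(R-r)^{-pm}\big)$, which tends to $1$ as $R-r\to 0$ and again violates the fixed-$\theta<1$ hypothesis; and a one-shot application with $r=1/2$, $R=1$ is useless because $\|D^m{\bf u}\|_{L_p(Q_1)}$ is neither controlled nor even finite under the hypothesis ${\bf u}\in\mathcal H^m_{p,\mathrm{loc}}$.

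The missing idea is to exploit that these bad terms enter the right-hand side under at most $m-1$ derivatives: a term $D^\mu{\bf h}$ with $|\mu|\le m-1$ can be measured in $\mathbb H^{-m}_p$ through $\|{\bf h}\|_{L_q}$ with $1/q=1/p+1/(d+2m)$ (a Sobolev-type gain of one derivative in the negative scale), so the top-order commutator is paid for at a \emph{lower} integrability exponent $q<p$. One then interpolates $\|D^m{\bf u}\|_{L_q(Q_R)}\le \epsilon\|D^m{\bf u}\|_{L_p(Q_R)}+N_\epsilon\|D^m{\bf u}\|_{L_s(Q_R)}$ ($s<q$) to create the genuinely small factor needed for absorption/iteration, and anchors the resulting finite bootstrap at the $p=2$ Caccioppoli (energy) estimate, where the top-order term is absorbed by ellipticity rather than by the global constant (with a duality step when $p<2$). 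This exponent-gain/bootstrap mechanism — not a direct $\tfrac12$-absorption at the fixed exponent $p$ — is what makes the "standard localization" behind \cite[Lemma 4]{d2012} close; without it, the step from your cutoff inequality to the claimed $\theta(r)\le\tfrac12\theta(R)+\dots$ fails.
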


By Sobolev embedding theorem, Lemma \ref{lemlocal}, the interpolation inequality,  iteration arguments (see, for instance, \cite[pp. 81--82]{g1993}), and a bootstrap argument, we have the following result. 
\begin{lemma}\label{lem loc lq}
Let $0<p<1<q<\infty$. Assume $A^{\alpha\beta}$ satisfies \eqref{BMO} with a sufficiently small constant $\gamma_{0}=\gamma_{0}(d,n,m,p,q,\Lambda,\nu)\in (0,1/2)$ and ${\bf u}\in C_{\text{loc}}^{\infty}(\mathbb R^{d+1})$ satisfies \eqref{systems} in $Q_{1}$, where ${\bf f}_{\alpha}\in L_{q}(Q_{1})$. Then there exists a constant $N=N(n,d,m,p,q,\nu,\Lambda,r_0)$ such that 
$$\|{\bf u}\|_{\mathcal{H}_{q}^{m}(Q_{1/2})}\leq N\big(\|{\bf u}\|_{L_{p}(Q_{1})}+\sum_{|\alpha|\leq m}\|{\bf f}_{\alpha}\|_{L_{q}(Q_{1})}\big).$$
\end{lemma}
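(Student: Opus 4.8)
Everything will be deduced from Lemma~\ref{lemlocal}. First note that, since ${\bf u}\in C_{\text{loc}}^{\infty}(\mathbb R^{d+1})$, all the $L_s$-type quantities below are finite; the only role of this smoothness hypothesis is to make the quasi-norm $\|{\bf u}\|_{L_p(Q_1)}$ with $p<1$ on the right-hand side meaningful, so that the assertion is to be read as an \emph{a priori} estimate. Applying Lemma~\ref{lemlocal} at the exponent $q\in(1,\infty)$ — which requires $\gamma_0=\gamma_0(d,n,m,q,\nu)$ small, the only place smallness of $\gamma_0$ is used — and using translation, the parabolic rescaling $(t,x)\mapsto(t_0+\mu^{2m}t,\,x_0+\mu x)$ (under which \eqref{BMO} is invariant) together with a finite-overlap covering, I first record the localized estimate
\[
\|{\bf u}\|_{\mathcal{H}_{q}^{m}(Q_{\rho})}\le \frac{N}{(R-\rho)^{m}}\,\|{\bf u}\|_{L_q(Q_R)}+N\sum_{|\alpha|\le m}\|{\bf f}_\alpha\|_{L_q(Q_1)},\qquad \tfrac12\le\rho<R\le\tfrac34.
\]
The task is then to downgrade the $\|{\bf u}\|_{L_q}$ on the right to the much weaker $\|{\bf u}\|_{L_p(Q_1)}$, $p<1$; this is where the interpolation inequality, the Sobolev embedding, and the iteration enter.

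The key is the Gagliardo--Nirenberg interpolation inequality on $Q_1$,
\[
\|{\bf u}\|_{L_q(Q_1)}\le \varepsilon\,\|D^m{\bf u}\|_{L_q(Q_1)}+C(\varepsilon)\,\|{\bf u}\|_{L_p(Q_1)},\qquad \varepsilon\in(0,1),
\]
valid for every $0<p<q$: the relevant interpolation exponent $a=\dfrac{(d+2m)(1/p-1/q)}{(d+2m)(1/p-1/q)+m}$ lies in $(0,1)$, so the inequality is legitimate for the $L_p$-quasi-norm with $p<1$ (the lower-order derivatives are first removed by interpolation of Sobolev norms and absorption; these interpolation/embedding facts are exactly the Sobolev-type ingredients of the lemma). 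Rescaling this to a cylinder of radius $\mu\sim R-\rho$ and summing over a finitely overlapping cover of $Q_\rho$ by such cylinders — here one uses $\ell^q\hookrightarrow\ell^p$ for the $p<1$ term — yields
\[
\|{\bf u}\|_{L_q(Q_\rho)}\le \varepsilon\,(R-\rho)^{m}\,\|D^m{\bf u}\|_{L_q(Q_R)}+\frac{C(\varepsilon)}{(R-\rho)^{(d+2m)(1/p-1/q)}}\,\|{\bf u}\|_{L_p(Q_1)}.
\]
Combining the two displayed inequalities on suitably nested cylinders, the factor $(R-\rho)^{m}$ precisely cancels the $(R-\rho)^{-m}$, so the coefficient of $\|{\bf u}\|_{\mathcal{H}_{q}^{m}(Q_R)}$ becomes $N\varepsilon$; fixing $\varepsilon$ with $N\varepsilon\le\tfrac12$ gives, with $\lambda:=m+(d+2m)(1/p-1/q)$,
\[
\|{\bf u}\|_{\mathcal{H}_{q}^{m}(Q_{\rho})}\le \tfrac12\,\|{\bf u}\|_{\mathcal{H}_{q}^{m}(Q_{R})}+\frac{N}{(R-\rho)^{\lambda}}\,\|{\bf u}\|_{L_p(Q_1)}+N\sum_{|\alpha|\le m}\|{\bf f}_\alpha\|_{L_q(Q_1)},\qquad \tfrac12\le\rho<R\le\tfrac34.
\]

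Finally I would apply the standard absorbing iteration lemma (\cite[pp.~81--82]{g1993}; equivalently \cite[Chapter~V, Lemma~3.1]{g1983}) to the nondecreasing function $\rho\mapsto\|{\bf u}\|_{\mathcal{H}_{q}^{m}(Q_{\rho})}$ on $[\tfrac12,\tfrac34]$, which eliminates the $\|{\bf u}\|_{\mathcal{H}_{q}^{m}(Q_R)}$-term and produces
\[
\|{\bf u}\|_{\mathcal{H}_{q}^{m}(Q_{1/2})}\le N\,\|{\bf u}\|_{L_p(Q_1)}+N\sum_{|\alpha|\le m}\|{\bf f}_\alpha\|_{L_q(Q_1)},
\]
the claimed estimate (the constant inheriting the dependence on $p$ and $r_0$ through $\lambda$ and Lemma~\ref{lemlocal}). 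The step I expect to be the main obstacle is the second paragraph: one must have the Gagliardo--Nirenberg inequality in the form that remains valid for the $L_p$-quasi-norm with $p<1$, and, more importantly, must keep the powers of $R-\rho$ exact in both the rescaled Lemma~\ref{lemlocal} estimate and the rescaled-and-summed interpolation inequality, so that after cancellation the prefactor of $\|{\bf u}\|_{\mathcal{H}_{q}^{m}(Q_R)}$ is a small absolute constant rather than a quantity blowing up as $\rho\uparrow R$ — otherwise the iteration lemma does not apply.
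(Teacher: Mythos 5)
The skeleton of your argument (rescaled Lemma \ref{lemlocal} on a finitely overlapping cover, an $\varepsilon$-interpolation step, then the absorption lemma of \cite[Chapter V, Lemma 3.1]{g1983} on nested cylinders) is the right one and is essentially what the paper's one-line proof indicates, and your first localized estimate with the factor $(R-\rho)^{-m}$ is fine. The gap is exactly at the step you yourself flagged: the claimed inequality $\|{\bf u}\|_{L_q(Q_1)}\le \varepsilon\|D^m{\bf u}\|_{L_q(Q_1)}+C(\varepsilon)\|{\bf u}\|_{L_p(Q_1)}$ is false on a space--time cylinder, because in this paper $D^m$ means purely spatial derivatives. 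Take ${\bf u}(t,x)=g(t)$: then $D^m{\bf u}\equiv0$ and the inequality reduces to $\|g\|_{L_q(-1,0)}\le C\|g\|_{L_p(-1,0)}$ with $q>p$, a reverse H\"older inequality which fails. The slice-wise (elliptic) Gagliardo--Nirenberg inequality does hold for $0<p<1$, but integrating its $q$-th power in $t$ only yields the mixed norm $\big\|\|{\bf u}(t,\cdot)\|_{L_p(B_1)}\big\|_{L_q(dt)}$, which is not controlled by $\|{\bf u}\|_{L_p(Q_1)}$ since $q>p$. Spatial derivatives alone give no control of the oscillation in time, so no parabolic-scaling version of your second display (note your exponent $a$ already uses the parabolic dimension $d+2m$) can be true as a functional inequality, and your third display is therefore unjustified.

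The repair uses precisely the ingredient in the paper's sketch that plays no role in your write-up, the (parabolic) Sobolev embedding theorem: since the left-hand side of your first display is the full norm $\|{\bf u}\|_{\mathcal{H}^m_q}$, you have the information ${\bf u}_t\in\mathbb{H}^{-m}_q$ at your disposal, and $\mathcal{H}^m_q(Q_R)\hookrightarrow L_{q_1}(Q_R)$ for some $q_1>q$ (into $L_\infty$, indeed H\"older spaces, when $mq>d+2m$). Combining this with the elementary interpolation $\|{\bf u}\|_{L_q}\le\|{\bf u}\|_{L_p}^{\theta}\|{\bf u}\|_{L_{q_1}}^{1-\theta}$, which is just H\"older's inequality and is valid for $0<p<q<q_1$ including $p<1$, and Young's inequality, one gets $\|{\bf u}\|_{L_q(Q_R)}\le \varepsilon\,\|{\bf u}\|_{\mathcal{H}^m_q(Q_R)}+C(\varepsilon)\|{\bf u}\|_{L_p(Q_R)}$; that is, the $\varepsilon$-term must be the full $\mathcal{H}^m_q$-norm (carrying the ${\bf u}_t$ control), not $\|D^m{\bf u}\|_{L_q}$. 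With this substitute, and keeping track of the powers of $R-\rho$ exactly as you do, your first display and the Giaquinta iteration go through unchanged and give the stated estimate (the finiteness of $\|{\bf u}\|_{\mathcal{H}^m_q(Q_\rho)}$ needed for the absorption is supplied by ${\bf u}\in C^\infty_{\mathrm{loc}}$, and a bootstrap in the exponent, as the paper mentions, can be used if one prefers to gain integrability in several steps via Lemma \ref{lemlocal} rather than in one embedding).
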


Next we shall use Lemmas \ref{lemlocal} and \ref{lem loc lq} to conclude that we only need to consider the parabolic systems without lower-order terms. By Lemma \ref{lem loc lq}, we have for some $q$ satisfying $mq>d+2m$,
\begin{equation}\label{est u H1q}
\|{\bf u}\|_{\mathcal{H}_{q}^{m}(Q_{1/2})}\leq N\big(\|{\bf u}\|_{L_{p}(Q_{1})}
+\sum_{|\alpha|\leq m}\|{\bf f}_\alpha\|_{L_{\infty}(Q_{1})}\big).
\end{equation}
By the Sobolev embedding theorem for $mq>d+2m$, we obtain
\begin{equation}\label{estholderu}
\|{\bf u}\|_{C^{\frac{1}{2}(1-\frac{d+2m}{mq}),m-\frac{d+2m}{q}}(Q_{1/2})}\leq N\big(\|{\bf u}\|_{L_{p}(Q_{1})}
+\sum_{|\alpha|\leq m}\|{\bf f}_\alpha\|_{L_{\infty}(Q_{1})}\big).
\end{equation}
Now we rewrite \eqref{systems} as
\begin{align*}
{\bf u}_{t}+(-1)^{m}\sum_{|\alpha|=|\beta|= m}D^{\alpha}(A^{\alpha\beta}D^{\beta}{\bf u})&=\sum_{|\alpha|\leq m}D^{\alpha}{\bf f}_{\alpha}+(-1)^{m+1}\sum_{|\alpha|=m, |\beta|\leq m-1}D^{\alpha}(A^{\alpha\beta}D^{\beta}{\bf u})\\
&\quad+(-1)^{m+1}\sum_{|\alpha|\leq m-1, |\beta|\leq m}D^{\alpha}(A^{\alpha\beta}D^{\beta}{\bf u})\quad \mbox{in}~Q_{1}.
\end{align*}
Denote 
\begin{equation*}
{\bf F}_\alpha:={\bf f}_{\alpha}+(-1)^{m+1}\sum_{|\beta|\leq m}A^{\alpha\beta}D^{\beta}{\bf u}.
\end{equation*}
For each $\gamma$ with $|\gamma|\leq m-1$, let ${\bf v}\in W_{q}^{1,2m}(Q_{1})$ satisfy
\begin{align*}
\begin{cases}
{\bf v}_{t}+(-1)^{m}\sum_{|\alpha|=|\beta|= m}\delta^{\alpha\beta}\mathcal{I}_{n\times n}D^{\alpha}D^{\beta}{\bf v}={\bf F}_\gamma\chi_{Q_{1/2}}
&\ \mbox{in}~Q_{1},\\
{\bf v}=|D{\bf v}|=\dots=|D^{m-1}{\bf v}|=0&\ \mbox{on}~\partial_p Q_1.
\end{cases}
\end{align*}
Then by the global $W_{q}^{1,2m}$-estimate (see, for instance, \cite[Theorem 6]{dk2011ARMA} by considering $e^{-(\lambda_0+1)t}{\bf u}$ instead of ${\bf u}$ there), we have
\begin{align*}
\|{\bf v}\|_{W_{q}^{1,2m}(Q_{1})}\leq N\Big(\sum_{|\alpha|\leq m}\|D^\alpha{\bf u}\|_{L_{q}(Q_{1/2})}+\sum_{|\alpha|\leq m-1}\|{\bf f}_\alpha\|_{L_{q}(Q_{1/2})}\Big).
\end{align*}
Using \eqref{est u H1q} with $2mq>d+2m$, we get ${\bf v}\in C^{1-\frac{d+2m}{2mq},2m-\frac{d+2m}{q}}(Q_{1/2})$ (see \cite{i1962,i196200} and \cite[Theorems 2.5, 2.7]{s1967}), with
\begin{equation}\label{est Dv Holder}
\|{\bf v}\|_{C^{1-\frac{d+2m}{2mq},2m-\frac{d+2m}{q}}(Q_{1/2})}\leq N\big(\|{\bf u}\|_{L_{p}(Q_{1})}
+\sum_{|\alpha|\leq m}\|{\bf f}_\alpha\|_{L_{\infty}(Q_{1})}\big).
\end{equation}
Denote ${\bf w}:={\bf u}-\sum_{|\gamma|\leq m-1}D^\gamma{\bf v}$, then ${\bf w}$ satisfies
\begin{align*}
{\bf w}_{t}+(-1)^{m}\sum_{|\alpha|=|\beta|= m}D^{\alpha}(A^{\alpha\beta}D^{\beta}{\bf w})=\sum_{|\alpha|=m}D^{\alpha}{\bf g}_{\alpha}\quad \mbox{in}~Q_{1/2},
\end{align*}
where
\begin{align*}
{\bf g}_{\alpha}={\bf f}_{\alpha}+(-1)^{m+1}\sum_{|\beta|\leq m-1}A^{\alpha\beta}D^{\beta}{\bf u}+(-1)^{m+1}\sum_{|\beta|=m}(A^{\alpha\beta}-\delta^{\alpha\beta}\mathcal{I}_{n\times n})D^{\beta}\big(\sum_{|\gamma|\leq m-1}D^\gamma{\bf v}\big).
\end{align*}
In view of the assumption that ${\bf f}_\alpha\in L_\infty(Q_1)$, the boundedness of $A^{\alpha\beta}$, \eqref{estholderu}, and \eqref{est Dv Holder}, we have 
$$\|{\bf g}_{\alpha}\|_{L_{\infty}(Q_{1/2})}\leq N\left(\|{\bf u}\|_{L_{p}(Q_{1})}+\|{\bf f}_{\alpha}\|_{L_{\infty}(Q_{1})}\right).$$
Moreover, by the triangle inequality, using \eqref{estholderu} and \eqref{est Dv Holder} again, one can see that ${\bf g}_{\alpha}$ is piecewise DMO. 
Therefore, it suffices to consider the parabolic systems without lower-order terms.


\end{document}